\documentclass[11pt, letterpaper]{article}

\usepackage[utf8]{inputenc}
\usepackage[T1]{fontenc}
\usepackage{lmodern}

\usepackage{geometry}
\usepackage{amsmath,amssymb,amsfonts}
\usepackage{amsthm}
\usepackage{mathrsfs}
\usepackage{mathtools}
\usepackage{IEEEtrantools}
\let\sig\relax
\DeclareMathOperator{\sig}{S}

\usepackage{graphicx}
\usepackage{subcaption}
\usepackage{booktabs}
\usepackage{multirow}
\usepackage{tabularx,array}
\newcolumntype{C}{>{\centering\arraybackslash}X}

\usepackage{makecell}
\usepackage{caption}
\usepackage{adjustbox}
\usepackage{chngcntr}

\usepackage{algorithm}
\usepackage{algorithmicx}
\usepackage{algpseudocode}
\usepackage{listings}

\usepackage[title]{appendix}

\usepackage{xcolor}
\usepackage{textcomp}
\usepackage{manyfoot}
\usepackage{xspace}
\usepackage{changepage}
\usepackage{xparse}

\usepackage[sorting=nyt]{biblatex}
\usepackage[colorlinks=true,allcolors=blue]{hyperref}
\usepackage{cleveref}

\hypersetup{
  pdftitle={Scalable high-degree cubature formulae for simulating SDEs},
  pdfauthor={Peter Koepernik, Thomas Coxon and James Foster}
}

\AtBeginEnvironment{appendices}{\crefalias{section}{appendix}}
\AtBeginEnvironment{appendices}{\crefalias{subsection}{appendix}}
\AtBeginEnvironment{appendices}{\crefalias{subsubsection}{appendix}}

\newtheorem{theorem}{Theorem}

\newtheorem{construction}{Construction}
\newtheorem{lemma}{Lemma}
\newtheorem{corollary}{Corollary}
\theoremstyle{definition}
\newtheorem{definition}{Definition}

\theoremstyle{remark}
\newtheorem{remark}{Remark}

\DeclareMathOperator{\E}{\mathbb{E}}
\DeclareMathOperator{\OA}{OA}
\DeclareMathOperator{\V}{\mathbb{V}}
\DeclareMathOperator{\var}{\mathrm{Var}}
\DeclareMathOperator{\mve}{MVE}

\newcommand{\iu}{\mathrm{i}}
\newcommand{\R}{\mathbb{R}}
\renewcommand{\P}{\mathbb{P}}
\newcommand{\m}{\hspace{0.25mm}}

\newcommand{\mmm}{\hspace{10mm}}

\newcommand{\N}{\mathbb{N}}
\newcommand{\NN}{\mathcal{N}}

\newenvironment{manualtheoremnumber}[1]{%
  \renewcommand\thetheorem{#1}%
}{}

\renewcommand{\P}{\mathbb{P}}
\newcommand{\diff}{\mathop{}\!\mathrm{d}}
\newcommand{\ind}{\boldsymbol{1}}
\newcommand{\e}{\mathrm{e}}
\DeclareMathOperator{\clamp}{clamp}

\NewDocumentCommand{\DeclareGoal}{mm}{%
  \expandafter\gdef\csname goal@#1\endcsname{#2}%
  \par\smallskip
  \noindent\hypertarget{goal:#1}{}%
  \vspace*{-1.0em}
  \begin{adjustwidth}{2.5em}{0pt}%
    \noindent\hbox to \linewidth{%
      \parbox[c]{\dimexpr\linewidth-4em\relax}{\itshape #2}%
      \hfill{\parbox[c]{4em}{\raggedleft\normalfont(\textup{#1})}}%
    }%
  \end{adjustwidth}%
  \vspace*{0.5em}
  \par\smallskip
}
\NewDocumentCommand{\Goal}{m}{\hyperlink{goal:#1}{(\textup{#1})}}
\NewDocumentCommand{\GoalText}{m}{\csname goal@#1\endcsname}

\newcommand{\short}{ARCANE\xspace}
\newcommand{\longg}{\textbf{A}lgorithm for \textbf{R}ecombination of \textbf{C}ubatures from Orthogonal \textbf{A}rrays that match \textbf{N}ested \textbf{E}xpected signatures}

\begin{document}

\title{\short: Scalable high-degree cubature formulae for simulating SDEs without Monte Carlo error}

\author{
  Peter Koepernik\textsuperscript{*}\thanks{Department of Statistics, University of Oxford, UK. Email: \texttt{peter.koepernik@stats.ox.ac.uk}.} \and
  Thomas Coxon\textsuperscript{*}\thanks{Dept.~Aeronautical and Automotive Engineering, Loughborough University, UK. Email: \texttt{T.Coxon2@lboro.ac.uk}.} \and
  James Foster\thanks{Department of Mathematical Sciences, University of Bath, UK. Email: \texttt{jmf68@bath.ac.uk}.}
}

\date{\today}

\makeatletter
\let\old@fnsymbol\@fnsymbol
\renewcommand{\@fnsymbol}[1]{\@arabic{#1}}
\makeatother

\maketitle

\makeatletter
\let\@fnsymbol\old@fnsymbol
\makeatother

\begingroup
\renewcommand{\thefootnote}{\fnsymbol{footnote}}
\footnotetext[1]{Equal contribution.} 
\endgroup

\begin{abstract}
Monte Carlo sampling is the standard approach for estimating properties of solutions to stochastic differential equations (SDEs), but accurate estimates require huge sample sizes. Lyons and Victoir (2004) proposed replacing independently sampled Brownian driving paths with ``cubature formulae'', deterministic weighted sets of paths that match Brownian ``signature moments'' up to some degree $D$. They prove that cubature formulae exist for arbitrary $D$, but explicit constructions are difficult and have only reached $D=7$, too small for practical use. We present ARCANE, an algorithm that efficiently and automatically constructs cubature formulae of arbitrary degree. It reproduces the state of the art in seconds and reaches $\boldsymbol{D=19}$ within hours on modest hardware. In simulations across multiple different SDEs and error metrics, our cubature formulae robustly achieve an error orders of magnitude smaller than Monte Carlo with the same number of paths.
\end{abstract}

\paragraph{Keywords.}
Stochastic differential equations, numerical simulation, cubature methods, Brownian motion, path signatures, orthogonal arrays, recombination, linear programs, dyadic intervals.

\section{Introduction}

Stochastic differential equations (SDEs) are commonly used to model continuous-time phenomena evolving under the influence of random noise. For example, SDEs with \emph{scalar noise} (that is, driven by a single Brownian motion) have been applied across a range of topics---such as
solar forecasting \cite{badosa2018solar, iversen2014solar, chaabane2025solar}, wind speed modelling \cite{arenaslopez2020wind}, physics \cite{rehman2023optics}, engineering \cite{wang2022sdes}, epidemiology \cite{cai2015sirs, maki2013sir}, population genetics \cite{czuppon2021genetics}, mathematical finance \cite{cox1985cir}, systems biology \cite{vaisband2025nsdes}, economics \cite{chen2025wealth} and social sciences \cite[chapter 2]{cobb1981socialsdes}. In addition, SDEs with \emph{low-dimensional noise} (e.g.~driven by $d\leq 3$ independent Brownian motions) are natural models for physical objects whose motion exhibits random fluctuations. For example, they have been used to model aircraft dynamics \cite{li2023aircraft, liu2021aircraft}.

We consider low-dimensional SDEs, defined by Stratonovich integration, of the form:
\begin{equation}
    \diff y_{t} = \mu(y_{t}) \diff t + \sum_{i=1}^d \sigma_i(y_{t}) \circ \diff W_t^i,
    \label{eq:sde}
\end{equation}
where the initial condition $y_{0} = \xi$ and solution $y = \{y_t\}_{t \in [0, T]}$ take values in $\R^e$, $W = \{W_{t}\}_{t\in [0,T]}$ is a standard $d$-dimensional Brownian motion, and $\mu,\sigma_i : \R^e \to \R^e$ are the vector fields (referred to as the drift and diffusion respectively). Note that ``low-dimensional'' refers to the number $d$ of independent driving Brownian motions; the dimension $e$ of the solution path is unrestricted.

In practice, one is often interested in estimating key statistics of the solution to the SDE~\eqref{eq:sde}, such as its mean or variance, or more generally the expectation of $f(y_T)$ for some functional $f\colon \R^e \to \R$. This is traditionally achieved using Monte Carlo estimation (or Quasi-Monte Carlo, see Section~\ref{sect:results}), which takes the average over some number $M$ of independently sampled numerical solutions of~\eqref{eq:sde}, see Table~\ref{tab:mc-vs-cubature}~(left) and Figure \ref{fig:monte_carlo_estimation} for an illustration.

However, by the Central Limit Theorem, the error of the Monte Carlo estimate is proportional to~$1 / \sqrt{M}$, where~$M$ is the number of samples. An accurate estimation of $\E[f(y_T)]$ can therefore require an enormous number of sample paths and be computationally very expensive---especially for complex SDEs.

\begin{figure}
    \centering
    \includegraphics[width=0.95\linewidth]{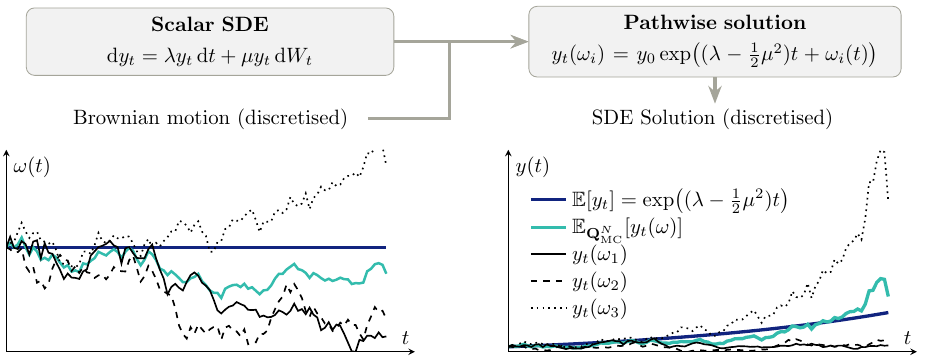}
    \caption{Monte Carlo estimation for the mean of a scalar SDE, in the ideal case where there exists a closed-form solution that can be evaluated on a discretised time domain.}
    \label{fig:monte_carlo_estimation}
\end{figure}

\begin{table}
  \centering
  \small
  \setlength{\tabcolsep}{6pt}
  \renewcommand{\arraystretch}{1.25}
  \begin{tabularx}{\linewidth}{@{} l C C @{}}
    \toprule
     & \textbf{Monte Carlo} & \textbf{\short} \\
    \midrule
    \textbf{Estimate}
      & \multicolumn{2}{l}{%
        \refstepcounter{equation}\label{eq:estimator}%
        \hfill%
        $\displaystyle
          \E\big[f(y_T)\big]
          \approx
          \sum_{i=1}^{M}\lambda_i f\big(y_T(\omega_i)\big)
        $%
        \hfill(\theequation)%
      } \\
    \midrule
    \addlinespace[0.8ex]
    \textbf{Paths} $(\omega_i)$
      & randomly sampled Brownian paths
      & deterministic Brownian‑like paths \\
    \addlinespace[0.8ex]
    \textbf{Weights} $(\lambda_i)$
      & uniform, $\lambda_i=\tfrac{1}{M}$
      & not necessarily uniform\\
    \bottomrule
  \end{tabularx}
  \caption{Comparison of the Monte Carlo and \short estimators. Here, $y_T(\omega) \in \R^e$ denotes the solution at time $T$ of the system \eqref{eq:sde} driven by a path $\omega\colon [0,T]\to \R^d$.}
  \label{tab:mc-vs-cubature}
\end{table}

\begin{figure}
    \centering
    \includegraphics[width=\linewidth]{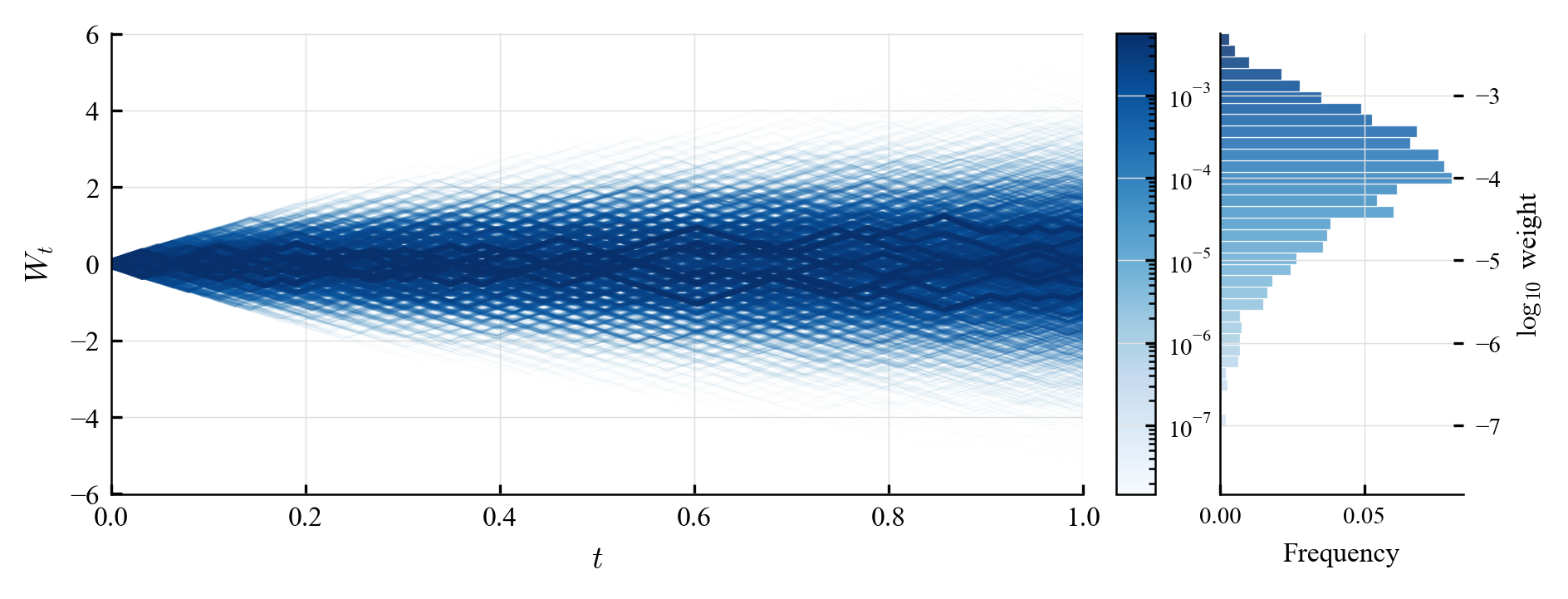}
    \bigbreak
    \includegraphics[width=\linewidth]{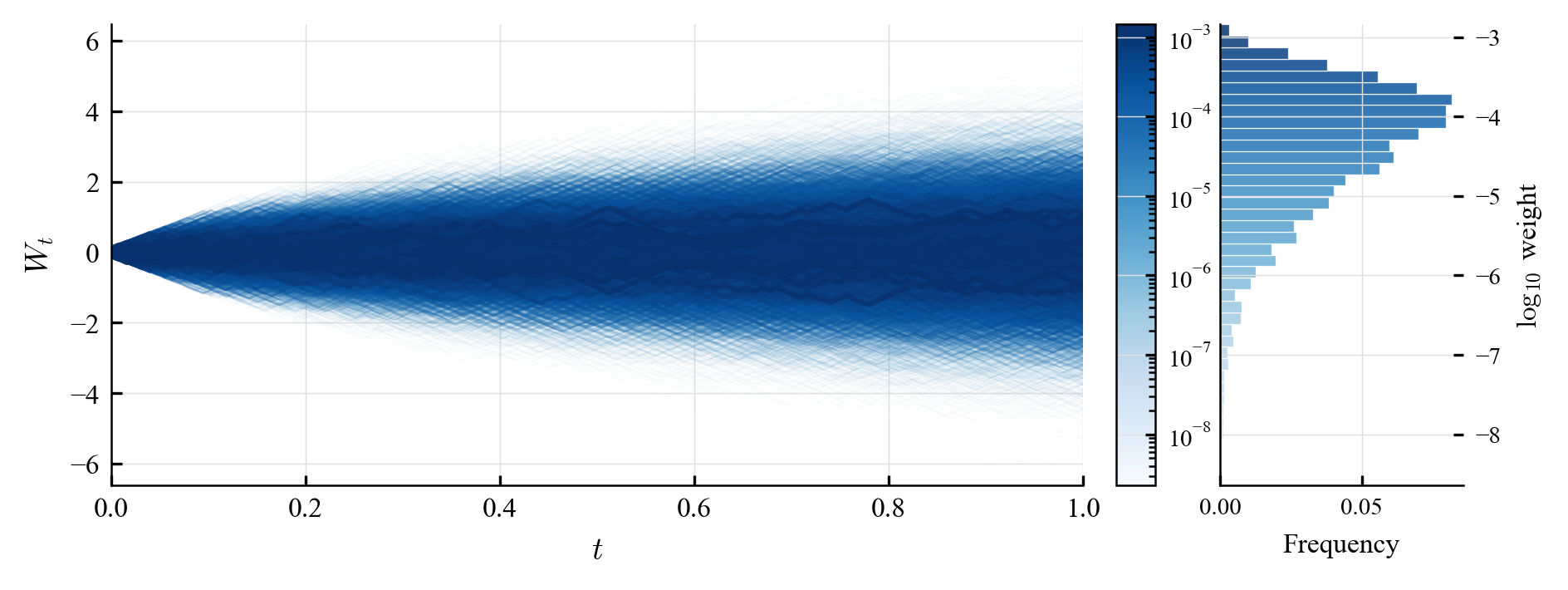}
    \bigbreak
    \includegraphics[width=\linewidth]{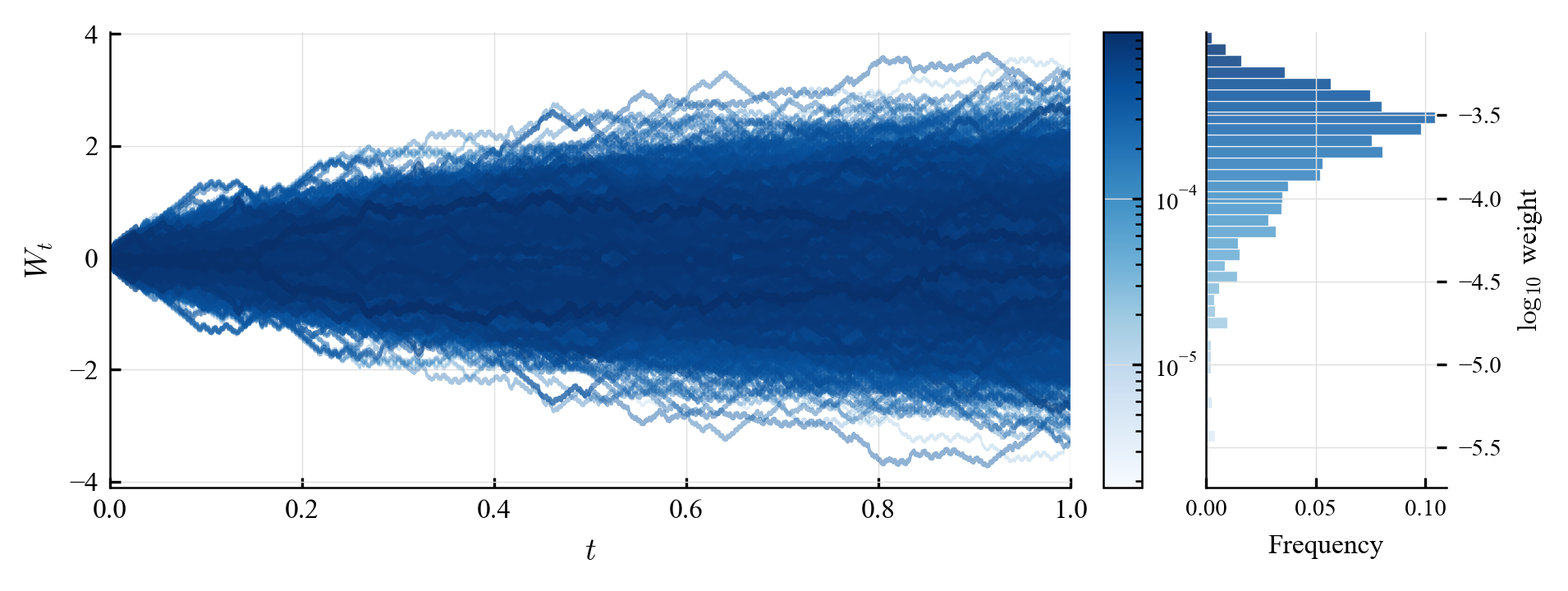}
    \caption{An illustration of our ARCANE cubature formulae with degrees 17, 19, and 5 (dyadic depth 8). As the formulae contain 3194, 8362, and 3952 paths, respectively, we have added noise to help distinguish paths. Here, the thickness and opacity of each path is taken to be proportional to its associated weight (i.e.~paths with larger weights are more visible); a histogram of the weights is to the right of each cubature formula.
    }
    \label{fig:degree_19_plot}
\end{figure}

In this paper, we aim to reduce this estimation error by replacing the uniformly weighted and independently sampled Brownian paths in \eqref{eq:estimator} with a carefully designed \emph{deterministic} set of Brownian-like paths with (not necessarily uniform) associated probability weights, see Table~\ref{tab:mc-vs-cubature} (right). 
These paths are piecewise linear so that the SDE~\eqref{eq:sde} simply reduces to an ODE along each piece, which can be solved by standard methods.
Experimentally, the estimation error using our paths is consistently orders of magnitude lower than the error obtained by a similar number of Monte Carlo samples.
An example of such a collection of Brownian-like paths produced by our methodology is illustrated in Figure~\ref{fig:degree_19_plot}.

More precisely, we propose a computational methodology called \short\footnote{\longg}, which is an efficient and scalable algorithm for producing what are known as \emph{cubature formulae}: sets of weighted paths that match the ``signature moments''~\cite{chevyrev2022signature} of Brownian motion up to some \emph{degree}~$D$. It was first observed by Lyons and Victoir in their seminal 2004 paper~\cite{lyons2004cubature} that cubature formulae could be used in place of Monte Carlo estimation for simulating SDEs in the way we outlined in Table~\ref{tab:mc-vs-cubature}. They proved that cubature formulae exist for arbitrary Brownian dimension $d$ and arbitrary degree~$D$, but were only able to find explicit constructions for $D \le 5$.
Further constructions have been found in the subsequent literature~\cite{gyurko2011cubature, shinozaki2017cubature, hayakawa2022cubature, malyarenko2022cubature, ferrucci2026cubature}, however, none have exceeded $D=7$.
In the one-dimensional setting, cubature formulae of degree $D=11$ have technically been constructed~\cite{gyurko2011cubature}, but they consist of high-order ``rough paths'';
solving the SDE~\eqref{eq:sde} driven by such a path involves the computation of hundreds to thousands of high-order derivatives (up to eleventh order) of the drift and diffusion vector fields, as well as their evaluation at every step of the numerical solver. Even if the derivatives were efficiently computed by a symbolic program, solving~\eqref{eq:sde} for one of these paths would take up to thousands of times longer than for an ordinary (e.g.~piecewise linear) path.

Our \short algorithm can automatically construct cubature formulae of arbitrary dimension and degree.
They consist of piecewise linear paths and can therefore be used plug-and-play in place of random Brownian sample paths.
Our algorithm is optimised for GPU and highly parallelisable; using \short, we were able to reproduce the state of the art $D=7$ in seconds, and construct cubatures with degree up to $D=19$ in one hour on a single GPU node.

Using several real-world SDEs (from mathematical finance and population genetics), we demonstrate that our \short cubature formulae consistently achieve orders of magnitude more accuracy when estimating statistics for SDEs compared to traditional (Quasi-)Monte Carlo simulation. 

Underlying the efficacy of our \short algorithm is a series of innovations that drastically reduce the run-time of a known and conceptually simple algorithm (from exponential to low-order polynomial in the relevant parameter, see Section~\ref{sec:methods} for details), combined with a highly optimized implementation of the algorithm's core components in JAX~\cite{jax2018github}.
Code for our \short algorithm along with datasets containing the cubature formulae used in our experiments can be found at:
\vspace{-1mm}
\begin{center}
\href{https://github.com/tttc3/ARCANE-Cubature}{github.com/tttc3/ARCANE-Cubature}
\end{center}
\vspace{-1mm}

We summarise our key contributions below:
\begin{enumerate}
    \item[(1)] We introduce \short, a scalable, highly parallelisable and GPU-optimised algorithm for generating high-degree cubature formulae for SDE simulation.\smallskip
    \item[(2)] We demonstrate that cubature formulae obtained with this algorithm on low-end compute (a few GPU hours) already improve drastically over the previously best known cubature formulae, and lead to an SDE estimation error several orders of magnitude smaller compared with traditional Monte Carlo methods.\smallskip
    \item[(3)] We publish our constructed cubature formulae, as well as the code for the \short algorithm, which can be used with more compute to construct even higher degree cubatures.\smallskip
    \item[(4)] We prove several theoretical results on the correctness of our approach, which may be of independent interest to members of the stochastic analysis and rough path theory communities.\vfill
\end{enumerate}

\section{Results}\label{sect:results}
In order to be able to estimate and compare the simulation errors of the cubature and the Monte Carlo SDE solvers, we need a ground truth to compare against. Most statistics of most SDEs are not available in closed form, but for a few SDEs, the mean and variance of the solution, $m(t) = \E \left[ y_t \right]$ and $v(t) = \V(y_t)$, are available in closed form (as a function of $t$, the initial condition $y_0$, and the parameters of the SDE). Then, for a given set of Monte Carlo paths, or a given cubature $(\lambda_i,\omega_i)_{i=1}^M$, we calculate the empirical mean and variance, \[
\hat{m}(t) = \sum_{i=1}^M \lambda_i y_t(\omega_i),\qquad \hat{v}(t) = \sum_{i=1}^M \lambda_i \Big(y_t(\omega_i) - \hat{m}(t)\Big)^2,
\] and combine them in the \emph{Mean-Variance error} (MVE)
\begin{equation}\label{eq:mean-variance-error}
    \mve
    = \sqrt{(\hat{m}(T) - m(T))^2 + (\sqrt{\hat{v}(T)} - \sqrt{v(T)})^2  } ,
\end{equation}
for a fixed time $T$ (where $T = 1.0$ unless stated otherwise). The motivation behind formula~\eqref{eq:mean-variance-error} is that it describes the $2$-Wasserstein distance between two one-dimensional Gaussians $\mathcal{N}(\hat{m}(T),\hat{v}(T))$ and $\mathcal{N}(m(T),v(T))$.
We study the MVE for four different one-dimensional SDEs summarised in Table~\ref{table:sdes}, with results presented in Fig.~\ref{fig:sde-multifigure}.

\begin{figure}
    \centering
    \adjustbox{center}{%
        \includegraphics[width=1.05\linewidth]{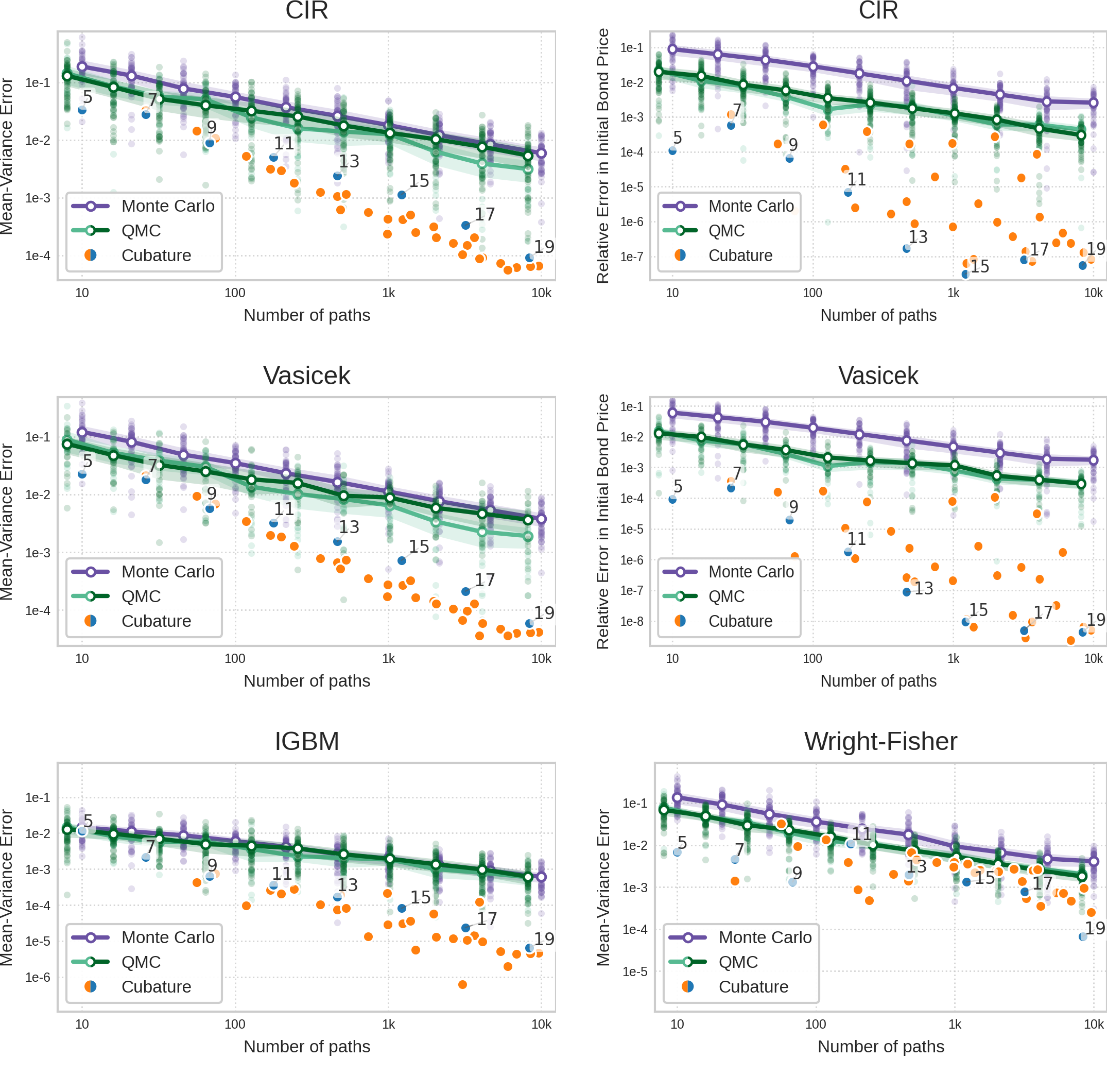}
    }
    \caption{Error plots showing the performance of our cubature formulae in comparison with plain Monte Carlo and QMC methods across a range of different SDEs and error metrics. Unlabelled cubature formulae in orange are \emph{dyadic} cubature formulae, see Section~\ref{sec:methods} as well as Appendix~\ref{app:plots} for full-sized fully labelled versions of the same plots.}
    \label{fig:sde-multifigure}
\end{figure}

\begin{figure}
    \centering
    \includegraphics[width=5.28in]{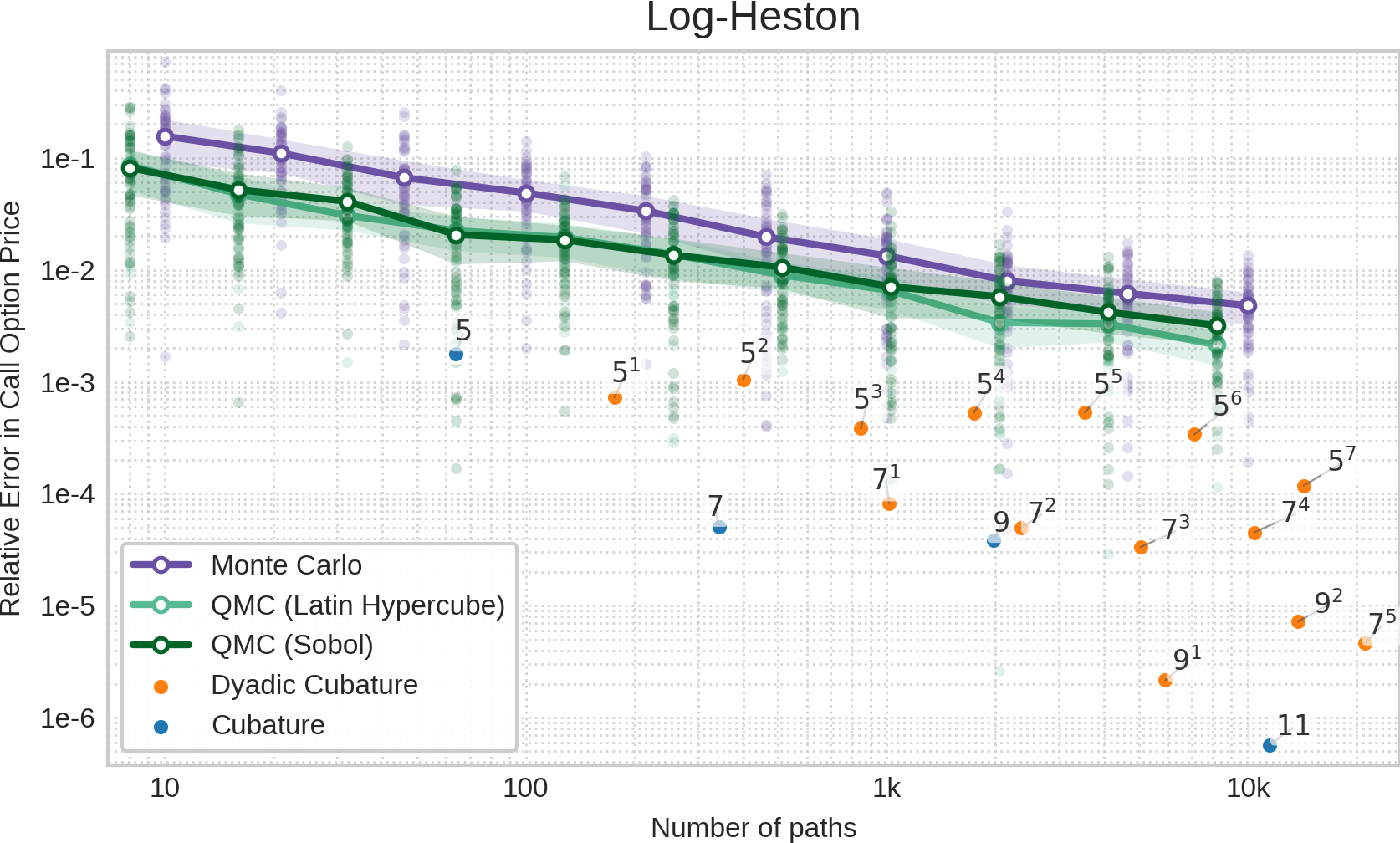}
    \caption{Error plot showing the performance of our cubature formulae in comparison with plain Monte Carlo and QMC methods measured in \emph{Call Price Relative Error} (see~\eqref{eq:loghestoncallprice}) in the log--Heston model. Superscripts in cubature labels refer to the \emph{dyadic depth} of the cubature, see Section~\ref{sec:methods}.}
    \label{fig:log-heston}
\end{figure}

\begin{table}[h]
    \centering
    \begin{tabular}{@{} l l l l @{}}
    \toprule
    Model & Stochastic Differential Equation (It\^{o} form) & Results & Details\\
    \midrule
    Vasicek / OU
    & $\diff X_{t} = a(b - X_t)\diff t + \sigma \diff W_t$
    & \multirow{4}{*}{Fig.~\ref{fig:sde-multifigure}}
    & \Cref{app:ou} \\[2pt]

    IGBM
    & $\diff X_{t} = a(b - X_{t}) \diff t + \sigma X_{t} \diff W_{t}$
    & 
    & \Cref{app:igbm} \\[2pt]

    CIR
    & $\diff X_{t} = a(b - X_{t}) \diff t + \sigma \sqrt{X_{t}} \diff W_{t}$
    & 
    & \Cref{app:cir} \\[1pt]

    Wright--Fisher
    & $\diff X_{t} = s X_{t}(1 - X_{t}) \diff t + \sqrt{\gamma X_{t}(1 - X_{t})} \diff W_t$
    & 
    & \Cref{app:wf} \\
    \midrule

    Log-Heston
    & $\begin{aligned}
        \diff X_t &= \Big(\mu - \frac{V_t}{2}\Big) \diff t + \sqrt{V_t} \diff W_t^x,\\
        \diff V_t &= a(b - V_t) \diff t + \sigma \sqrt{V_{t}} \diff W_t^v,\\
        \rho &= \mathrm{Correlation}(W^x, W^v).
    \end{aligned}$
    & Fig.~\ref{fig:log-heston}
    & \Cref{app:heston}\\
    \bottomrule
    \end{tabular}
    \caption{Examples of widely-used SDEs.}
    \label{table:sdes}
\end{table}

For the CIR and Vasicek models, there are furthermore closed formulas for the \emph{Initial Bond Price}
\begin{equation}\label{eq:initialbondprice}
    B(T) = \E \left[ \exp\left( - \int_0^T X_s \diff s\right)\right].
\end{equation}
If $X_t$ models a short rate, then $B(T)$ is the fair price at time zero of a zero-coupon bond paying~$1$ at maturity time~$T$ (see e.g.~\cite[Ch.~15]{shortrate}).
For a given cubature or set of Monte Carlo paths, we then compute the empirical expectation $\hat{B}(T)$ and with it the relative error $|B(T) - \hat{B}(T)| / B(T)$, which we call the \emph{Initial Bond Price Relative Error}, see Figure~\ref{fig:sde-multifigure}. Again, we use $T = 1.0$.
An advantage of this error metric over the MVE is that the bond price depends on the entire solution path rather than just its final value.

Finally, we also consider the log--Heston model (see Table~\ref{table:sdes}), which is driven by \emph{two} independent Brownian motions and therefore requires \emph{two}-dimensional cubature formulae to simulate. For the log--Heston model, there is a known closed formula for the \emph{Call Price}
\begin{equation}\label{eq:loghestoncallprice}
    C(T) = \e^{-\mu T} \E\left[ \left( \e^{X_T} - K\right)_+\right],
\end{equation}
which is the price at time zero of a European call option maturing at time $T$ with strike price $K$ (we use $T = 1.0$ and $K = 2.0$). Analogously to the bond price, we use the \emph{Call Price Relative Error} as the error metric for the log--Heston model, see Fig.~\ref{fig:log-heston}.

In addition to plain Monte Carlo, the results in Fig.~\ref{fig:sde-multifigure} and~\ref{fig:log-heston} also include two Quasi--Monte Carlo (QMC) methods as baselines. When using QMC, one replaces the sequence of independent samples of Gaussian increments of the discretised Brownian motion driving the SDE in Monte Carlo with deterministic sequences that are deliberately constructed to explore the range of possible outcomes more uniformly. In low-dimensional problems, QMC is known to improve the convergence rate of Monte Carlo from $O(1 / \sqrt{M})$ to $O(1 / M)$, see e.g.~\cite{wangqmc,niederreiterqmc}. No guaranteed error bound is known in the infinite-dimensional setting of SDE simulation, but empirically QMC usually performs slightly better than plain Monte Carlo.  In our experiments, we include Sobol sequences~\cite{sobol} and Latin hypercube sequences~\cite{latin} as QMC baselines (dark and light green in Fig.~\ref{fig:sde-multifigure}, respectively). We also tested Halton sequences~\cite{halton}, but they were consistently inferior (typically close to ordinary Monte Carlo), so we omit them from the plots.

As the results in Fig.~\ref{fig:sde-multifigure} and~\ref{fig:log-heston} show, our cubature formulae consistently outperform both standard Monte Carlo and QMC by orders of magnitude. The only exception is the Wright--Fisher Diffusion (bottom right in Fig.~\ref{fig:sde-multifigure}), where cubature formulae only perform marginally better. We believe this is due to the fact that the Wright--Fisher SDE has non-smooth diffusion coefficients close to the boundary; we go into detail on this in the next section.
Note further that the error in the CIR and Vasicek Initial Bond Price Relative Error plots (Fig.~\ref{fig:sde-multifigure} right column, top and center row) plateaus between $10^{-7}$ and $10^{-8}$.
We believe that this is because the cubature error becomes so small that the overall accuracy is instead limited by numerical errors introduced by the ODE solver, which are of the same order of magnitude.

For the IGBM model, we also illustrate the actual solution paths for one of our smaller cubatures, compared with an equivalent number of Monte Carlo paths, see Figure~\ref{fig:solutionpaths}.

\subsection{Limitations}

\paragraph{Smoothness}
Just as standard quadrature formulae can lose accuracy when the test functions are non-smooth, the accuracy of our cubature formulae will depend on the smoothness of the SDE coefficients. For example, the diffusion coefficient of the Wright--Fisher diffusion, $\sqrt{\gamma X_t (1-X_t)}$, is not smooth close to the boundaries at $0$ and $1$. But unlike in the CIR model, where a similar non-smoothness exists at the boundary $0$ but most solution paths never get close to it, solution paths to the Wright--Fisher diffusion do interact strongly with and frequently get absorbed by the boundary. To illustrate this effect, Fig.~\ref{fig:WF-multifigure} shows error plots for the Wright--Fisher diffusion when simulated until times $T=0.1$, $T = 1.0$, and $T = 10.0$, together with the associated solution paths from our degree $19$ cubature. When simulated until time $T = 0.1$, most solution paths have not reached the boundary and our cubatures perform much better than Monte Carlo, to a degree that is comparable with the other Mean--Variance plots in Fig.~\ref{fig:sde-multifigure}. When simulated until time $T = 1.0$ (as in Fig.~\ref{fig:sde-multifigure}), a significant proportion of the paths have been absorbed by or are close to the boundary where the diffusion coefficients are non-smooth, and the cubature formulae drop in performance. Interestingly, when simulated until time $T = 10.0$, cubature formulae regain their advantage. We suspect this is because at that time, most solution paths have been absorbed by the boundary, which simplifies the distribution of $y_T$, making its mean and variance easier to estimate.

\begin{figure}
    \centering
    \includegraphics[width=\linewidth]{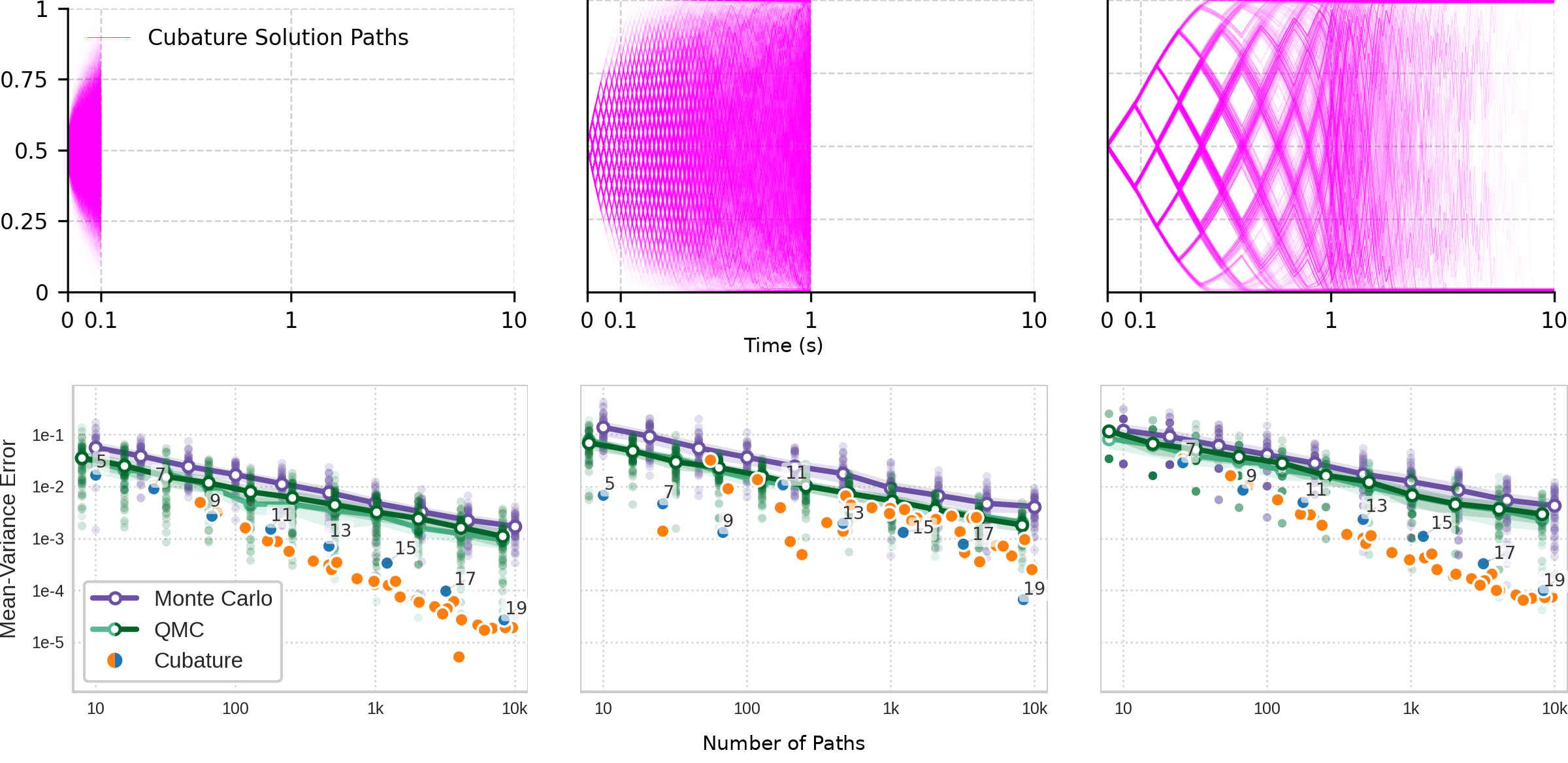}
    \caption{Solution paths generated by the degree-19 cubature formula for the Wright–Fisher diffusion, simulated up to final times $T=0.1$, $T=1.0$, and $T=10.0$ (top row, left to right).
    Corresponding MVE plots are shown in the bottom row, with a shared y-axis scale.
    }
    \label{fig:WF-multifigure}
\end{figure}

\paragraph{Long time horizons}
Our cubature formulae were constructed for a unit time interval $T = 1.0$. Although they can be rescaled to longer horizons—as done in the experiments shown in Fig.~\ref{fig:WF-multifigure}—one would generally expect their accuracy to degrade as the time horizon increases. To assess this effect, we simulated the IGBM model over time intervals of length $T = 1.0$ (as in Fig.~\ref{fig:sde-multifigure}), $T = 3.0$, and $T = 10.0$. The resulting error plots are shown in Fig.~\ref{fig:IGBM-multifigure}. As anticipated, performance does deteriorate with increasing time horizon, but the effect is noticeably weaker than expected. In particular, the cubature formulae retain a consistent advantage over Monte Carlo and QMC methods across all horizons considered. We also observe that dyadic cubatures exhibit a more pronounced loss of accuracy than their non-dyadic counterparts, a phenomenon for which we currently have no clear explanation.

\begin{figure}
    \centering
    \includegraphics[width=\linewidth]{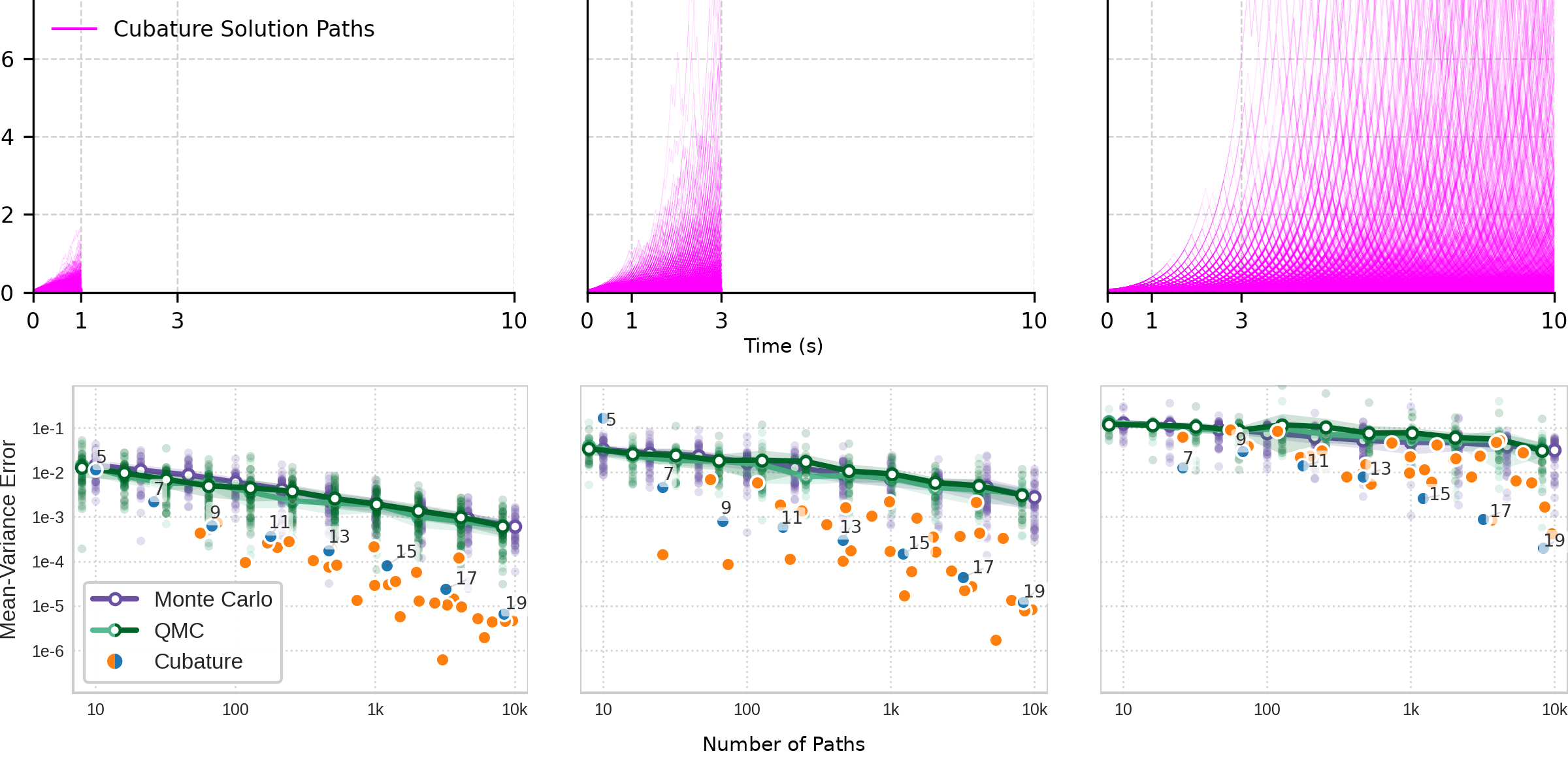}
    \caption{Solution paths generated by the degree-19 cubature formula for the IGBM SDE, simulated up to final times $T=1.0$, $T=3.0$, and $T=10.0$ (top row, left to right).
    Corresponding MVE plots are shown in the bottom row, with a shared y-axis scale.
    }
    \label{fig:IGBM-multifigure}
\end{figure}

\begin{figure}
    \centering
    \includegraphics[width=\linewidth]{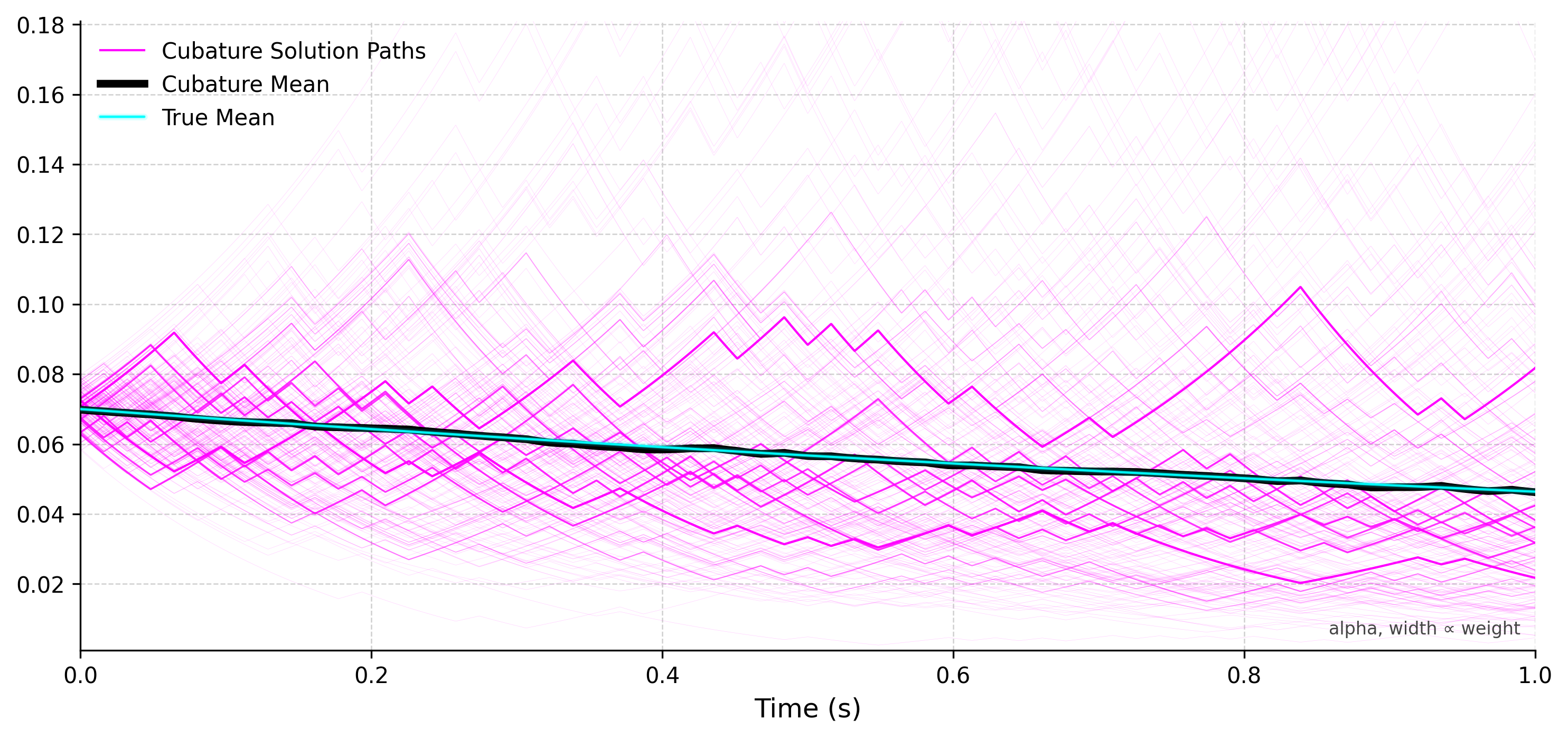}
    \includegraphics[width=\linewidth]{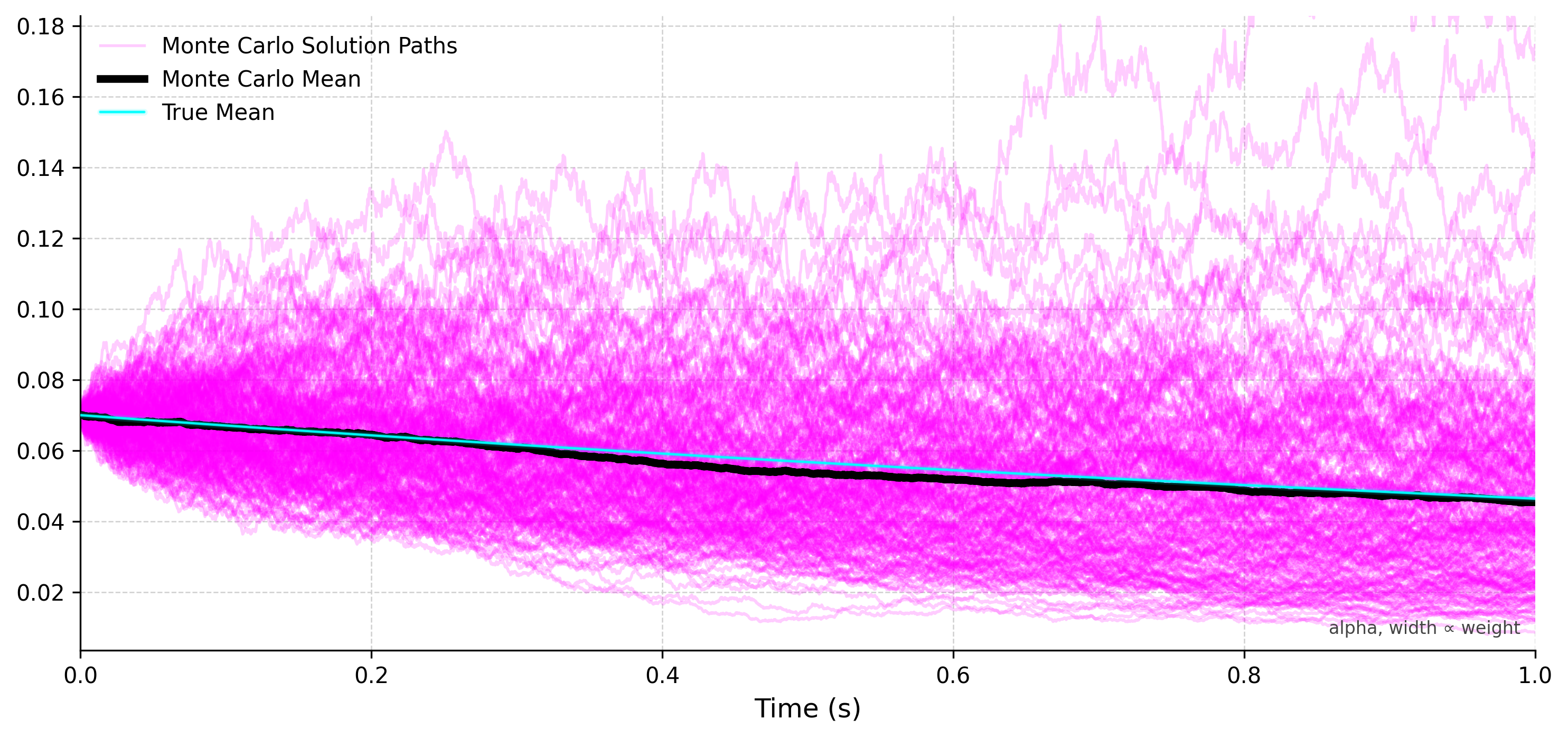}
    \caption{Illustration of the solution paths of IGBM, $\diff X_{t} = a(b - X_{t}) \diff t + \sigma X_{t} \diff W_{t}\m$, when driven by our $9^1$ cubature (degree $9$, dyadic depth $1$), which consists of 200 paths (top), and the solution paths to the same SDE when driven by 200 randomly sampled (Monte Carlo) paths (bottom). Despite consisting of the same number of paths, and hence having the exact same cost when used to solve the SDE, the mean of the cubature solution paths stays much closer to the true mean than that of the Monte Carlo solution paths. (The cubature looks as though it has fewer paths because many of its paths have small weight and are therefore less visible.)}
    \label{fig:solutionpaths}
\end{figure}
\section{Methods}\label{sec:methods}
In this section, we give a detailed explanation of how the \short algorithm works and present our main theoretical results. However, before we can do either, we start with a brief primer on the relevant mathematical background and notation.

\subsection{Background}

The mathematical foundation underlying our \short algorithm relies on concepts from rough path theory and the theory of signatures. We provide here a minimal introduction to these concepts, referring the reader to Appendices~\ref{app:roughpaths} and~\ref{app:theoretical results} for a more comprehensive treatment.

\paragraph{Signatures and Cubature Formulae}
Given a path $x\colon [0,T] \to \R^{d+1}$, where $x^0(t) = t$ is fixed to be the time coordinate, the \emph{signature} of $x$ is the collection of all its iterated integrals:
\[
\sig(x) = \left(1, \sig^1(x), \sig^2(x), \ldots\right),
\]
where $\sig^m(x) = ( \sig^J(x)\colon J \in \{0,\ldots,d\}^m)$ is the collection of $m$-fold iterated integrals of $x$. That is, if $J = (j_1,\ldots, j_m) \in \{0,\ldots,d\}^m$, then \[
\sig^J(x) = \int_{0 < u_1 < \cdots < u_m < T} \diff x^{j_1}(t) \ldots \diff x^{j_m}(t) \in \R.
\]
For Brownian motion, these integrals are understood in the Stratonovich sense. The \emph{degree} of a multi-index $J = (j_1,\ldots,j_m)$ and its associated signature term $\sig^J(x)$ is 
\begin{equation}\label{eq:degreeJ}
   \|J\| = m + |\{i\colon j_i = 0\}|,
\end{equation}
that is, the number of terms in $J$ with zeros (which correspond to the time coordinate) counted twice. This definition is rooted in the fact that the Brownian signature $\sig^J_{0,t}(W)$ is equal in distribution to $(\sqrt{t}\,)^{\|J\|} \sig^J_{0,1}(W)$.

The signature captures important geometric information about a path; the first level is the increment, $\sig^1(x) = x(T) - x(0)$, the second level encodes the area enclosed between the path and the straight line from its startpoint to its endpoint, and the higher order terms can be viewed as higher order geometric information (e.g.~volumes).

A key insight of rough path theory (see Appendix~\ref{app:roughpaths} for a short and \cite{terrystflour} for a comprehensive introduction) is that the signature of a path determines how it behaves when used as a driving signal in differential equations like \eqref{eq:sde}. Another insight is that the \emph{expected} signature of a \emph{random} path captures its distribution (see Lemma~\ref{lem:expsig characteristic}). In combination, this motivates the definition of a \emph{cubature formula on Wiener space}. That is, a collection of \emph{deterministic} paths $(\omega_i)_{i=1}^M$ with associated probability weights $(\lambda_i)_{i=1}^M$ which seek to ``emulate'' a Brownian motion by matching its expected signature up to some degree~$D$:
\[
\sum_{i=1}^M \lambda_i \sig^J(\omega_i) = \E\big[\sig^J(B)\big],
\]
for all $J$ with $\|J\| \leq D$. Then, $D$ is called the \emph{degree} of the cubature.

\paragraph{Size of Cubature Formulae}
Lyons and Victoir~\cite[Thm.\ 2.4]{lyons2004cubature} proved that for any $d,D\in\N$, a $d$-dimensional cubature formula of degree $D$ exists that consists of at most
\begin{equation}\label{eq:cubature size}
    M(d,D) \coloneqq \left| \left\{ J \in \{0, \ldots ,d\}^*\colon \left\|J\right\| \in \{D-1,D\} \right\}  \right|
\end{equation}
paths, where $\{0, \ldots ,d\}^* = \bigcup_{k =0} ^\infty \{0, \ldots ,d\}^k$ is the set of all multi-indices.
See Table~\ref{tab:cubature size comparisons} for a few example values of $M(d,D)$.

Our \short algorithm constructs cubature formulae with size at most $M(d,D)$, often even smaller (see Table~\ref{tab:cubature size comparisons}).

\paragraph{Connection to rough path theory}
Related to cubature formulae for SDEs is rough path theory---specifically expected path signatures \cite{chevyrev2022signature} and the universal limit theorem (Theorem~\ref{thm:universal limit theorem} in the appendix).
 
This theorem guarantees that if a sequence of cubature formulae converges to Brownian motion in an appropriate sense, then solutions of SDEs driven by these cubatures will converge to the true solution driven by Brownian motion. This also underpins our theoretical results in Section~\ref{sec:theory}, where we prove asymptotic correctness of our cubature formulae under certain assumptions. This provides a theoretical justification for replacing Monte Carlo sampling with deterministic cubature formulae for SDE simulation, as outlined in Table~\ref{tab:mc-vs-cubature}.

Prior to our work, the universal limit theorem had seen applications in the context of SDE simulation. For example, in \cite{foster2024adaptive}, it was used to show the pathwise convergence of numerical methods for simulating multidimensional SDEs with adaptive step sizes.

\subsection{The \short algorithm}

As an input to the algorithm, we fix a Brownian dimension $d\ge 1$ and a degree $D \ge 1$.
The \short algorithm consists of the following core steps, each of which we explain in more detail below and illustrate in Fig.~\ref{fig:arcane_algorithm}.
\begin{itemize}
    \item We start by constructing an \emph{approximate} cubature from a binary orthogonal array (or, equivalently, from a kind of error-correcting code).\medbreak
    
    \item We then compress that approximate cubature to minimal size through an algorithm called ``Recombination'' \cite{litterer2012cubature, tchernychova2015cubature}---implemented efficiently on GPU.\medbreak
    
    \item Next, we use a linear program to modify the weights of the compressed, approximate cubature in such a way that it becomes an exact cubature.\medbreak
    
    \item In an optional final step, we take the product of the cubature with itself and recombine it again, yielding a cubature that matches the Brownian signature moments to the given degree $D$ not only on $[0,1]$, but also on $[0,0.5]$ and $[0.5,1]$. This step can be repeated for any number of times, yielding cubatures of arbitrary \emph{dyadic depth}.
\end{itemize}

\begin{figure}
    \centering
    \includegraphics[width=\linewidth]{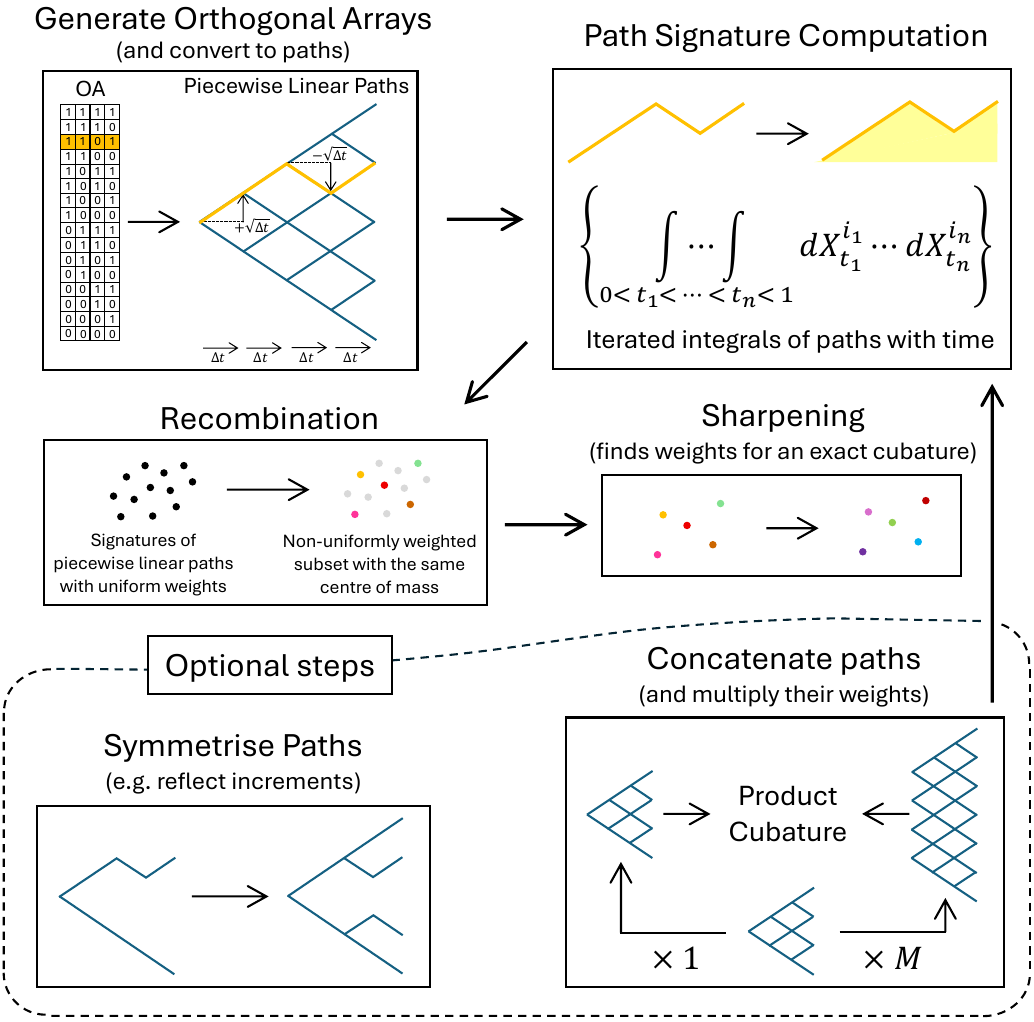}\vspace{1mm}
    \caption{High-level overview of the ARCANE algorithm.}
    \label{fig:arcane_algorithm}
\end{figure}

We now describe each step in more detail.

\medskip\textbf{Step 1: Construction of Approximate Cubature}

\par\noindent\textbf{Outcome}: Approximate cubature of size $(Nd)^{\lfloor D/2\rfloor}$ (with uniform weights).

An obvious way of generating approximate cubatures are product constructions. In the simplest case, we could divide $[0,1]$ into $N$ equally sized subintervals, and consider the (uniformly weighted) set of $2^{Nd}$ paths that take a step of size $\pm 1 / \sqrt{N}$ in each of the $d$ dimensions on each of the $N$ intervals, with linear interpolations in between. This is nothing but the simple symmetric random walk on $[0,1]$ with $N$ steps and Brownian scaling, which is well-known to converge to Brownian motion as $N\to \infty$. Its expected signature can be shown to be within a relative error of $O(1 / N)$ of that of Brownian motion up to any fixed degree (see Lemma~\ref{lem:product-cubature}), making it a valid yet impractically large approximate cubature.

Our key observation is that constructions from the theory of error-correcting codes and orthogonal arrays~\cite{hedayat1999,huffman2003} can be adapted to find a subset of those $2^{Nd}$ paths of much smaller size $(Nd)^{\lfloor D / 2\rfloor}$ that, again with uniform weights, has the \emph{same} expected signature up to the relevant degree $D$.
This reduction in size is transformative; for example, for dimension $d=1$ and degree $D=9$, if $N=64$ (which we found to be minimal for the success of step 3 below), the naive construction would consist of $2^N \approx 1.8\times 10^{19}$ paths, which our construction reduces to $N^4 \approx 1.7\times 10^7$ paths. Further examples are given in Table~\ref{tab:cubature size comparisons}.

More specifically, suppose that $A$ is a binary orthogonal array with $Nd$ columns and strength $D$. That is, $A$ is an $M \times Nd$ matrix with $\pm 1$ entries such that any $M \times D$ submatrix obtained by selecting any $D$ columns has the property that all $2^D$ possible rows appear the same number of times. The probabilistic interpretation of a binary orthogonal array with strength $D$ and $Nd$ columns is that of a coupling between $Nd$ Rademacher random variables (coin flips $\pm 1$) such that any $D$ of them are independent. We construct our cubature formulae from $A$ by transforming each row of the matrix into a path, whose increments are equal to the row's entries (in arbitrary order, and scaled by $1 / \sqrt{N}$).

An orthogonal array can be constructed, for example, by letting its rows be the code words of a binary error-correcting code of length $Nd$ with ``dual distance'' $D+1$ (see Definition~\ref{def:code} and Theorem~\ref{thm:dualdistance}, or~\cite[Thm.~4.9]{hedayat2012}). If $Nd$ is a power of two, then using a certain code based on work of Bose and Ray~\cite{bose-ray1960} yields such an orthogonal array with $M = (Nd)^{\lfloor D / 2\rfloor}$ rows.

We use the ``orray'' python package~\cite{orray}, which contains efficient JAX~\cite{jax2018github} implementations of state-of-the-art orthogonal arrays, and was implemented primarily by the first author of this paper.

Note that flipping all entries of an arbitrary column of an orthogonal array yields another orthogonal array. We have found that it increases the stability of the algorithm to randomly flip each column with probability $1/2$ each, independently of each other. One advantage of this is that, regardless of the original array, the marginal distribution of every row will be a sequence of independent coin flips, matching the distribution of a simple random walk.

The run-time of this step is $O(n \log n)$ where $n$ is the size of the output (the approximate cubature).

We note that orthogonal arrays have previously been used to construct cubature formulae for measures on $\R^n$ \cite{victoirOA}.

\begin{table}[t]
\caption{\small For different degrees $D\geq 5$, and $d=1$ fixed, this table lists the minimal $N$ (restricted to powers of $2$) for which the sharpening in Step~3 succeeds, the output size of Step~1 (naive and actual), and an upper bound on the output size of Steps~2 and~3 (with and without symmetrisation).
}\label{tab:cubature size comparisons}
\begin{tabularx}{\textwidth}{*{3}{C}cccc}
\toprule
\makecell[c]{\small Dimension} &
\makecell[c]{\small Degree} &
\makecell[c]{\small Minimum $N$} &
\multicolumn{2}{c}{\makecell[c]{\small Step 1\\[2pt] \small Output Size}} &
\multicolumn{2}{c}{\makecell[c]{\small Steps 2 \& 3\\[2pt] \small Output Size}} \\
\cmidrule(lr){4-5}
\cmidrule(lr){6-7}
$d\vphantom{\Big|}$ & $D$ & $N$ & \makecell[c]{\footnotesize naive \\[2pt] \small $2^{Nd}$} & \makecell[c]{\footnotesize actual\\[2pt] \small $(Nd)^{\lfloor D/2 \rfloor}$} & \makecell[c]{\footnotesize default\\[2pt] \small $M(d,D)$} & \makecell[c]{\footnotesize symmetrised\\[2pt] \small $2^d M_{\text{even}}(d,D)$} \\
\midrule
$1$ & $\phantom {1}5$  & $\phantom{1}32$\phantom{?} & $4.3 \times 10^{9\phantom{1}}$ & $1.0 \times 10^{3\phantom{1}}$ & $\phantom{111}13$  & $\phantom{11}10$  \\
$1$ & $\phantom{1}7$  & $\phantom{1}32$\phantom{?} & $4.3 \times 10^{9\phantom{1}}$ & $3.3 \times 10^{4\phantom{1}}$ & $\phantom{111}34$  & $\phantom{11}26$  \\
$1$ & $\phantom{1}9$  & $\phantom{1}64$\phantom{?} & $1.8 \times 10^{19}$           & $1.7 \times 10^{7\phantom{1}}$ & $\phantom{111}89$            & $\phantom{11}68$            \\
$1$ & $11$            & $\phantom{1}64$\phantom{?} & $1.8 \times 10^{19}$           & $1.1 \times 10^{9\phantom{1}}$ & $\phantom{11}233$            & $\phantom{1}178$            \\
$1$ & $13$            & $\phantom{1}64$\phantom{?} & $1.8 \times 10^{19}$           & $6.9 \times 10^{10}$           & $\phantom{11}610$            & $\phantom{1}466$            \\
$1$ & $15$            & $\phantom{1}64$\phantom{?} & $1.8 \times 10^{19}$           & $4.4 \times 10^{12}$           & $\phantom{1}1597$            & $1220$            \\
$1$ & $17$            & $\phantom{1}64$\phantom{?} & $1.8 \times 10^{19}$           & $2.8 \times 10^{14}$           & $\phantom{1}4181$            & $3194$            \\
$1$ & $19$            & $128$ & $3.4 \times 10^{38}$           & $9.2 \times 10^{18}$           & $10976$            & $8362$            \\
\bottomrule
\end{tabularx}
\end{table}

\medskip\textbf{Step 2: Recombination}

\par\noindent\textbf{Outcome}: Approximate cubature of size at most $M(d,D)$ (with non-uniform weights)

\medskip\noindent
\emph{Recombination} \cite{litterer2012cubature,tchernychova2015cubature} is an algorithm that takes a set of points $(x_i)_{i=1}^M$ in $\R^m$ for some $m \in \N$, with probability weights $(\lambda_i)_{i=1}^M$, and returns a subset $I \subset \{1,\ldots,M\}$ of size $|I| \le m + 1$ together with new probability weights $(\tilde{\lambda}_i)_{i\in I}$ such that the weighted mean is preserved: \[
\sum_{i=1}^M \lambda_i x_i = \sum_{i\in I} \tilde{\lambda}_i x_i\m.
\]
Carath\'eodory's convex hull theorem guarantees that such a subset always exists, and that the size $m+1$ of the subset is optimal in general (in the sense that there exist examples where no smaller subsets with the desired property exist; for example, consider the $d+1$ corners of the standard simplex in $\R^d$ with e.g.\ uniform weights). The run-time of recombination is $O(Mm + \log(M / m) m^3)$~\cite[page 61]{tchernychova2015cubature}.

Since signatures can be viewed simply as vectors of real numbers, we can apply recombination to the approximate cubature obtained in Step 1 to obtain a new approximate cubature with the same expected signature, but size at most $M(d,D)$ (recall~\eqref{eq:cubature size}), and in general with non-uniform weights.
For example, if $d=1$ and $D=9$, then the size of the compressed cubature is at most $143$, down from $1.6\times 10^7$. Examples for more values of $D$ and $N$ are in Table~\ref{tab:cubature size comparisons}.

We use the Baryx library~\cite{baryx}, which contains an efficient JAX~\cite{jax2018github} implementation of recombination that was implemented primarily by the second author of this paper.

\smallskip\noindent\textbf{Symmetrisation.} An optional speed-up in this step can be achieved by exploiting symmetries: a $d$-dimensional Brownian motion $B = (B^{(1)},\ldots,B^{(d)})$ has the same distribution as $(\varepsilon_1 B^{(1)},\ldots,\varepsilon_d B^{(d)})$ for any $\varepsilon_1,\ldots,\varepsilon_d \in \{-1,+1\}$.
This implies that many signature terms in the expected Brownian signature are automatically zero---namely the \emph{odd} terms, which are those corresponding to multi-indices $J$ where at least one non-zero index $j\in \{1,\ldots,d\}$ appears an odd number of times. By enforcing the same set of symmetries on our cubature paths, we can ensure that their expected signature is also zero at odd terms, and hence matches exactly. The recombination algorithms must then only be applied to \emph{even} terms, 
which makes it significantly faster since recombination scales cubically in the number of terms whose weighted mean is preserved.
Furthermore, the size of the resulting cubature is $2^d M_\text{even}(d,D)$, where \[
[
    M_\text{even}(d,D) = \left| \left\{ I \in \{0, \ldots ,d\}^*\colon \|I\|\le \{D-1,D\} \text{ and $\|I\|$ even} \right\}  \right| .
\] (Cmp.\ \eqref{eq:cubature size}.) A comparison with $M(d,D)$ is in Table~\ref{tab:cubature size comparisons} for a few example values.

\medskip\textbf{Step 3: Sharpening}

\par\noindent\textbf{Outcome}: If successful, exact cubature of size $M(d,D)$ or $2^d M_\text{even}(d,D)$.

In this step, we check whether the weights of the approximate cubature obtained in Step~2 can be adjusted in such a way that the resulting reweighted cubature is exact. There are different ways of doing so, and we implemented several (including gradient descent, a linear program, or an alternating projections method; see our code base for details). We have no theoretical guarantee for the success of this step, but in our experiments it always succeeded if $N$ was sufficiently large. We record the minimum value of $N$ (restricted to powers of $2$) required for the sharpening to succeed as a function of the degree $D$ in Table~\ref{tab:cubature size comparisons}. We also found that it can be helpful to stop the recombination in Step 2 early, e.g.\ stop when the approximate cubature has been compressed to three or four times its minimal size, in order to give the linear program more degrees of freedom.

\medskip\textbf{Step 4: Dyadic Construction}

\par\noindent\textbf{Outcome}: Exact \emph{dyadic} cubature.

Given an exact cubature formula made up of paths $(\omega_i)_{i=1}^M$ and probability weights $(\lambda_i)_{i=1}^M$, we can construct the product of the cubature with itself: first we rescale each of the paths $\omega_i \colon [0,1]\to \R^d$ to a path $\tilde{\omega}_i\colon [0,0.5] \to \R^d$ as follows: \[
\tilde{\omega}_i(t) = \frac{ \omega_i(2t)}{\sqrt{2}}.
\]
Then the product cubature is made up of $M^2$ paths $(\omega^\text{prod}_{i,j})_{i,j=1}^M$, where $\omega^\text{prod}_{i,j}\colon [0,1]\to \R^d$ is the concatenation of $\tilde{\omega}_i$ and $\tilde{\omega}_j$, with associated weight $\lambda_{i,j}^\text{prod} = \lambda_i \lambda_j$. This cubature has the property that it matches the expected Brownian signature on $[0,0.5]$ and $[0.5,1]$ in addition to $[0,1]$. We then apply recombination to compress this cubature to its minimal size.

This process can be repeated iteratively, to obtain cubatures that match the expected Brownian signature on all intervals of the form $[k2^{-n_d},(k+1)2^{-n_d}]$ for some $n_d\ge 0$, which we call the \emph{dyadic depth} of the cubature (an ordinary cubature as produced by Step 3 has dyadic depth zero). We have a theoretical result (Theorem~\ref{thm:maindyadic} below) indicating that scaling the dyadic depth of a cubature alongside its degree is guaranteed to yield an asymptotically consistent SDE estimator.

\subsection{Parameter Ablation}
The two free parameters of our algorithm are the number of steps $N$, and the seed for the randomisation in Step~1. We have always used the minimum number of steps $N$ for which the sharpening step succeeds, but it would be reasonable to ask if increasing $N$ further has an impact on performance. In Fig.~\ref{fig:ablation}, we show our error metrics on the CIR, IGBM, and Vasicek models with different cubature formulae; for each degree from $5$ to $11$, we run the construction with several different values of $N$ as well as several randomisation seeds. As Fig.~\ref{fig:ablation} illustrates, increasing $N$ beyond the minimum value does not seem to impact performance positively or negatively. But it does significantly increase the construction cost, justifying our approach of always using the minimum required value of $N$ for successful sharpening.

\begin{figure}
    \centering
    \includegraphics[width=\linewidth]{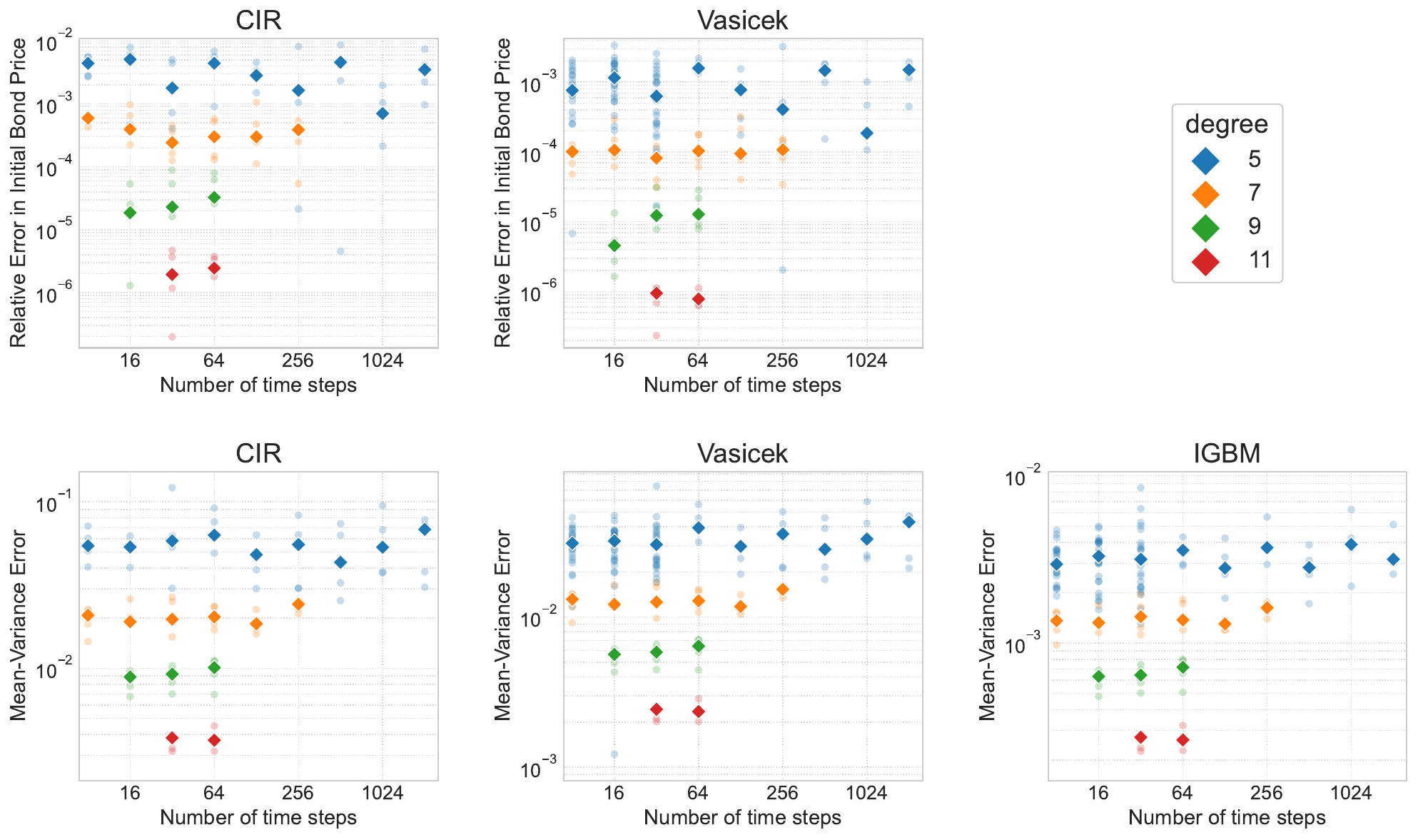}
    \caption{Error plots for cubature formulae of degrees $5$, $7$, $9$, and $11$, shown as a function of the number of steps $N$ used in Step~1 of the construction (with multiple randomisation seeds each). Increasing $N$ beyond the minimum required for successful sharpening has no discernible effect on performance.}
    \label{fig:ablation}
\end{figure}

\subsection{Theoretical results}\label{sec:theory}
In this section, we want to establish conditions under which our cubatures are guaranteed to exhibit asymptotic consistency in the following sense:

\DeclareGoal{C}{%
    If $y^n$ is the solution to the SDE~\eqref{eq:sde} driven by an \short cubature $(\lambda_i^n,\omega_i^n)$, and $y$ is the solution to the same SDE driven by Brownian motion, then $y^n \to y$ in distribution in the space of continuous paths.%
}

We conjecture that \Goal{C} holds as long as $D_n \to \infty$, with no assumptions on the dyadic depth. We fail to prove this (for the reader familiar with the language, we can show uniqueness of the subsequential limit, but not tightness),
but what we can prove is that \Goal{C} holds if, in addition to $D_n \to \infty$, either the dyadic depth goes to infinity along with the degree (Theorem~\ref{thm:maindyadic}), or we skip the recombination Step (Theorem~\ref{thm:mainOA}).

\begin{theorem}\label{thm:maindyadic}
    Suppose $(\lambda_i^n, \omega_i^n)$ is an \short cubature with degree $D_n \to \infty$, and dyadic depth $m_n \in \N$ such that \[
    m_n \ge \Big(\frac 34 + \varepsilon\Big) \log_2 N_n - C
    \] for some $\varepsilon, C > 0$.
    Then \Goal{C} holds.
\end{theorem}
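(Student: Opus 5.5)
We argue via the universal limit theorem (Theorem~\ref{thm:universal limit theorem}). It suffices to show that the random rough paths $\mathbf{X}^n$, obtained by sampling path $\omega_i^n$ with probability $\lambda_i^n$ and lifting to its (geometric, $p$-variation with $2<p<3$) rough path, converge in distribution to the Stratonovich Brownian rough path $\mathbf{B}$ in $p$-variation (or $1/p$-Hölder) rough path topology. Continuity of the Itô--Lyons map then gives $y^n \to y$ in distribution. As usual, this splits into two parts: identification of finite-dimensional limits, and tightness in a slightly stronger rough path topology. Tightness then yields convergence in the weaker one by compact embedding.

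\textbf{Finite-dimensional limits.} By construction, each $\omega_i^n$ is piecewise linear with $N_n$ pieces (step size $1/N_n$). After $m_n$ dyadic doublings, the cubature matches the Brownian expected signature up to degree $D_n$ on every dyadic interval $[k2^{-m_n},(k+1)2^{-m_n}]$. Because the product construction concatenates independent copies and recombination preserves the joint moments, the expected signature also matches on every dyadic interval of level at most $m_n$. For any fixed dyadic grid and any fixed degree, the joint moments of the signature increments eventually equal the Brownian ones, since $D_n\to\infty$. By Lemma~\ref{lem:expsig characteristic}, which says the expected signature characterises the law, applied together with moment determinacy (Brownian signature terms have Gaussian-type tails), the grid signature increments converge in law to those of $\mathbf{B}$. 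Dyadic points are dense, so this identifies every subsequential limit as $\mathbf{B}$.

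\textbf{Tightness (main obstacle).} We need a Kolmogorov-type bound $\E|\mathbf{X}^{n,k}_{s,t}|^{q/k}\lesssim |t-s|^{q\alpha}$, uniform in $n$, for levels $k=1,2$, some $q$, and $\alpha>1/3$. There are two regimes.
\begin{itemize}
\item For $s,t$ dyadic at a level at most $m_n$ and moment order at most $D_n$, moments equal the Brownian ones and give $|t-s|^{q/2}$.
\item For $|t-s|<2^{-m_n}$, we must control the signature of a single path inside one dyadic block. A rescaled path on a block of length $h=2^{-m_n}$ has $N_n2^{-m_n}$ linear pieces, but we lack a uniform bound on its increments, since the weights are arbitrary after recombination. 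Instead we use the cruder bound from the matched second moment on the block: each path $\omega$ in the support has $\lambda_i|\omega_{\text{block}}|^2\le C h$. More usefully, each linear piece has slope at most roughly $\sqrt{N_n}$ (step $1/\sqrt{N_n}$ over time $1/N_n$, up to $\sqrt2$-rescalings). So on $[s,t]$ inside a block, $|X_{s,t}|\le \min(\sqrt{N_n}|t-s|\cdot 2^{m_n/2}\text{-rescaled},\ \text{block bound})$, with the block bound coming from the moment match via a maximal inequality.
\end{itemize}
Interpolating between the Lipschitz bound at scale $1/N_n$ and the Brownian moment bound at scale $2^{-m_n}$, and using a Garsia--Rodemich--Rumsey argument, gives a $1/p$-Hölder bound with $p<3$. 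This works provided the gap $N_n 2^{-m_n}$ grows slowly enough, and the calculation should produce exactly a condition of the form $2^{m_n}\gtrsim N_n^{3/4+\varepsilon}$, the hypothesis. I expect the delicate part to be handling level two (the area), where the constraint is tightest and yields the exponent $3/4$. The $\varepsilon$ provides the polynomial slack needed to sum over dyadic scales. Combining tightness with the uniqueness of the limit proves \Goal{C}.
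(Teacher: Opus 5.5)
Your overall architecture matches the paper's: the universal limit theorem, then tightness in $G\Omega_p$ from matched moments on dyadic intervals of level at most $m_n$ together with the deterministic slope bound $\sqrt{N_n}$ below scale $2^{-m_n}$, then identification of the limit via expected signatures. However, the identification step fails as written. Recombination after each product step preserves only the means of the vectors it is fed, namely the truncated signatures on the individual dyadic intervals. It destroys the independence of the concatenated halves, so ``recombination preserves the joint moments'' is false. Joint moments across blocks are also not determined by the per-interval data. For $d=1$, the matched degree-$4$ data on $[0,1]$, $[0,\tfrac12]$, $[\tfrac12,1]$ see the mixed moments of the half-increments $X_1,X_2$ only through $\E[(X_1+X_2)^4]-\E[X_1^4]-\E[X_2^4]$, never $\E[X_1^2X_2^2]$ alone. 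Hence your claim that joint moments of signature increments on a fixed dyadic grid eventually equal the Brownian ones is false, and your method-of-moments argument on finite-dimensional marginals has no input. Separately, Brownian signature terms do not have Gaussian-type tails beyond level $2$: $\sig^{(1,1,1)}_{0,1}=B_1^3/6$, and the cube of a Gaussian is not determined by its moments. Lemma~\ref{lem:expsig characteristic} rests on the infinite radius of convergence of the full expected signature, not on coordinatewise moment determinacy.

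The repair is the paper's route, which never looks at grids. Because of the time coordinate $x^0(t)=t$, the expected signature on $[0,1]$ \emph{alone} characterises the law on $G\Omega_p$ (Lemma~\ref{lem:expsig characteristic}). It is matched to degree $D_n\to\infty$, so $\mu_n(\sig)\to\mu(\sig)$ pointwise. You then must supply uniform integrability, so that any subsequential limit $\nu$ (which exists by tightness) inherits $\nu(\sig)=\mu(\sig)$ and hence equals $\mu$. This comes from the shuffle identity $(\sig^J)^2=\sum_{I\in\mathcal J}\sig^I$, whose right-hand side is eventually matched, giving $\sup_n\mu_n(|\sig^J|^2)<\infty$ (Lemmas~\ref{lem: uniform integrability free} and~\ref{lem:roughpaths}).

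Your tightness plan is essentially Lemma~\ref{lem:tightness dyadic}, but drop the maximal inequality and GRR: endpoint moment matching cannot give a maximal bound, and none is needed. Bound only dyadic intervals, using Markov with the $D$-th moment plus a union bound at levels $k\le m_n$ (which needs $D>1/(\tfrac12-\alpha)$) and the slope bound at finer levels. Then pass to all $s<t$ via Lemma~\ref{lem: holder dyadic}.

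Finally, the exponent $3/4$ comes from level one, not the area. For $|t-s|\le 2^{-m_n}$ you have $|x_t-x_s|/|t-s|^\alpha\le\sqrt{N_n}\,2^{-m_n(1-\alpha)}$, which is bounded iff $\log_2 N_n\le 2(1-\alpha)m_n+C$. Letting $\alpha\downarrow\tfrac13$ (forced by $p<3$) turns this into the hypothesis. The area $\sig^{(10)}$ only needs $\sqrt{N_n}\,2^{-2m_n(1-\alpha)}=O(1)$, and for $d\ge 2$ the L\'evy areas merely tie with level one.
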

\begin{remark}
    For the assertion of Theorem~\ref{thm:maindyadic} to hold, it is in fact sufficient if the cubature matches the expected Brownian signature up to degree $D_n \to \infty$ just on $[0,1]$, and only to degree $D=7$ on all of the finer dyadic intervals.
\end{remark}
\begin{theorem}\label{thm:mainOA}
    Let $(\lambda_i^n,\omega_i^n)$ be the approximate cubature resulting from the first step of the \short algorithm, prior to recombination, sharpening, and any dyadic constructions, with $N_n \to \infty$ and degree $D_n \to \infty$. Then \Goal{C} holds.
\end{theorem}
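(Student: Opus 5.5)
We want to prove Theorem~\ref{thm:mainOA}: the orthogonal-array cubature from Step~1, before recombination, gives solutions $y^n$ converging in distribution to the Brownian solution $y$. The route is the universal limit theorem (Theorem~\ref{thm:universal limit theorem}). It suffices to show that the random piecewise linear path $X^n$ converges in distribution to Brownian motion $W$ in a $p$-variation rough path topology for some $p\in(2,3)$. Here $X^n$ is obtained by picking a row of the orthogonal array uniformly at random and scaling its increments by $1/\sqrt N_n$, and we take its canonical (Stratonovich-type) lift to a geometric rough path. Continuity of the Itô--Lyons map then transfers the convergence to $y^n \to y$ in distribution in the space of continuous paths. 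Since the drift and diffusion are fixed vector fields, this needs no dependence on $D_n$ beyond the properties used below.

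The proof splits into identification of the limit and tightness.

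\textbf{Identification of the limit.} Because of the random column flips, each column is marginally Rademacher. Strength $D_n$ means any $D_n$ increments are independent. Any fixed finite-dimensional functional of the lifted path at finitely many times, such as signature terms over given intervals or polynomial moments of increments, involves moments in which only a bounded number of increment variables interact at each fixed polynomial degree. Lemma~\ref{lem:product-cubature} applies to the simple random walk on any subinterval; for $n$ large enough that $D_n$ exceeds the relevant degree, the orthogonal-array path has exactly the same such moments as the random walk. By Donsker's theorem together with the classical convergence of the lifted simple random walk to Stratonovich Brownian rough paths, combined with the characterisation of laws by expected signatures (Lemma~\ref{lem:expsig characteristic}), any subsequential limit must have the finite-dimensional distributions of enhanced Brownian motion. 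Moment convergence suffices here because Gaussian and iterated-integral moments grow slowly enough to be determining.

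\textbf{Tightness.} I expect this to be the main obstacle: one needs a Kolmogorov-type criterion for rough paths. The plan is to bound, for $s<t$ grid points, $\E|X^n_{s,t}|^{q}\lesssim |t-s|^{q/2}$ and $\E|\mathbb X^n_{s,t}|^{q/2}\lesssim |t-s|^{q/2}$ for some fixed even $q>6$ (say $q=8$). These are polynomial moments of degree at most $q$ in the increments, so once $D_n\ge q$ they coincide exactly with those of the simple random walk, where they hold by standard martingale and Burkholder estimates. Between grid points the path is linear, and the bound extends in the usual way, using $|t-s|\ge 1/N_n$ for non-grid points. The Kolmogorov criterion for rough paths (e.g.\ Friz--Hairer) then gives uniform moment bounds on the $1/p$-Hölder rough path norm for $p>2$ close to $2$. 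This yields tightness in a slightly weaker $p'$-variation topology with $p'\in(p,3)$, which is enough for the universal limit theorem.

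\textbf{Combining the two.} Tightness and uniqueness of the subsequential limit give convergence in law of the rough paths. The universal limit theorem then yields \Goal{C}. The key point, in contrast to the conjectured general case, is that tightness needs only fixed-degree moments, and these the orthogonal array preserves exactly. After recombination this exact matching over all subintervals is lost.
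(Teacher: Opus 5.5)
Your proposal is correct and follows essentially the paper's route. You obtain tightness in $G\Omega_p$ from moment bounds of fixed order, which coincide exactly with those of the simple random walk once $D_n$ exceeds that order; the paper proves this by hand in Lemma~\ref{lem:tightness OA} via Markov's inequality and a union bound over dyadic intervals, where you quote the Friz--Hairer Kolmogorov criterion. You then identify subsequential limits through convergence of expected signatures and Lemma~\ref{lem:expsig characteristic}, and finish with the universal limit theorem, exactly as the paper does.

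Two small fixes are needed. First, with $q=8$ the criterion only yields $p>8/3$, not $p$ close to $2$; this is harmless, and you can take $q$ as large as you like since $D_n\to\infty$. Second, you should state explicitly the uniform integrability needed to pass moment convergence to subsequential limits. It follows from the shuffle-product argument of Lemma~\ref{lem: uniform integrability free}, or from your uniform bounds on moments of every fixed order.
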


For the reader familiar with rough path theory, we remark that the essence of the proofs is showing that the cubatures converge in distribution to canonical Brownian motion \emph{in the space of rough paths}, in which case \Goal{C} follows from the universal limit theorem \cite[Section 3.5]{terrystflour} (convergence in the space of ordinary paths would not imply \Goal{C}).

\medskip
\begin{remark}
    Work closely related to these types of theorems has been done on \emph{Donsker-type theorems on rough path space}. Specifically, Breuillard, Friz, and Huesmann~\cite{donskerRW} give conditions under which a random walk with i.i.d.\ increments converges to Brownian motion in rough path space, and Bayer and Friz~\cite{donskercubature} prove convergence of product cubature formulae to Brownian motion in rough path space. The latter would imply (a stronger version of) our Theorem~\ref{thm:maindyadic} \emph{if} we didn't do the (essential) recombination step after the product construction. Nevertheless, the types of arguments used in the proof are very similar.
\end{remark}

Some of the facts established in the proofs may be of independent interest, such as Lemma~\ref{lem:roughpaths}, which gives conditions under which pointwise convergence of expected signatures of random rough paths implies convergence in distribution and vice versa.

The proofs of Theorems~\ref{thm:maindyadic} and~\ref{thm:mainOA} are relatively involved and in Appendix~\ref{app:theoretical results}.

\section{Conclusion}

In this paper, we have developed a computational pipeline called ``\short'' for constructing new state-of-the-art cubature formulae for numerically simulating SDEs.

Using the \short algorithm, we are able to construct high-degree cubature formulae with thousands of paths that empirically achieve orders of magnitude more estimation accuracy compared to standard Monte Carlo simulation across several real-world SDEs from mathematical finance and population genetics.

In addition, we showed that our cubature formulae are theoretically principled in the sense that, as their degree increases (and their degree on dyadic subintervals is $\geq 7$), their distribution will converge to that of Brownian motion in a rough path sense.
Importantly, and in contrast to past convergence results for SDE cubature formulae, this does not restrict the time interval on which the cubature formulae can be applied.

To help facilitate applications, we have made our implementation of the \short algorithm publicly available---along with datasets containing the cubature formulae used in our experiments. These can be found at \href{https://github.com/tttc3/ARCANE-Cubature}{github.com/tttc3/ARCANE-Cubature}.

\section{Future work}

\paragraph{Neural SDEs} Whilst SDEs have a wide range of uses, we expect that our state-of-the-art ARCANE cubature formulae will be more impactful in computationally demanding applications.
However, many such applications (e.g.~Langevin Markov Chain Monte Carlo \cite{scott2025langevin}) require simulating a high-dimensional Brownian motion---which is currently out of scope for our ARCANE cubature methodology. Instead, we believe that a more suitable application would be ``Neural'' SDEs \cite{li2020NSDEs, kidger2021neural, kidger2021efficient} where only a low-dimensional Brownian motion is needed, but performing backpropagation through the steps of the SDE solver (for each sample path) can result in a significant computational cost during training.
As an example, in \cite{chen2025wealth}, the following SDE model for wealth distribution is considered:
\begin{align}\label{eq:wealth_model}
\begin{split}
\diff w_t & = (r_t w_t - c_\theta(t, w_t\m, \eta_t) + \eta_t) \diff t,\\[2pt]
\diff\eta_t & = \mu(t, \eta_t) \diff t + \sigma(t, \eta_t) \diff W_t\m,
\end{split}
\end{align}
where $w_t$ and $\eta_t$ denote the wealth and income of a household at time $t$, $r_t$ is the interest rate, $c_\theta$ is the household's consumption (depending on $(t, w_t, \eta_t)$ and parameters $\theta$) and $\mu, \sigma : [0,T]\times\R\rightarrow\R$ are functions governing the dynamics of the household income.

Since it is challenging to prescribe how a household determines its level of consumption, the authors of \cite{chen2025wealth} make the assumption that a household's consumption policy $c_\theta$ is chosen to maximise a utility $u_1(c_\theta)$ whilst leaving some wealth $w_T$ at terminal time $T$.
More precisely, they model $c_\theta$ using a neural network---which they train to maximise  
\begin{align*}
\max_{\theta} \bigg(\E\bigg[\int_0^T e^{-\delta t}u_1(c_\theta(t, w_t, \eta_t)) \diff t + \lambda u_2(w_T)\bigg]\bigg),
\end{align*}
where $\delta, \lambda > 0$ and $u_1\m, u_2$ are constant-relative-risk-aversion (CRRA) utility functions.

Therefore, \eqref{eq:wealth_model} can be seen as a real-world example of a Neural SDE within economics. Since backpropagating through the dynamics of a (Neural) SDE is computationally expensive \cite{kidger2021neural}, we expect that reducing the number of required simulations through ARCANE cubature formulae to be particularly beneficial for such policy-based models. Similarly, we expect that ARCANE cubature can reduce the number of simulations required for the simulation-based inference of diffusions \cite{jovanovski2025SBI}.

\paragraph{Higher dimensions} On the methodological side, we believe that the main area of future work would be to extend ARCANE cubatures to higher dimensions. In particular, it would be interesting to see if taking existing ARCANE cubature formulae (e.g.~with high degree and/or dyadic depth) and `reshaping' them into new multidimensional cubatures is an effective strategy. This could potentially avoid the ``curse of dimensionality'' inherent in the signatures of high-dimensional paths. In any case, our ARCANE methodology can be scaled further to produce cubature formulae with higher degrees and for SDEs with higher dimensional noise, requiring more computational power alongside higher precision floating point operations (particularly as the terms in signatures of paths with finite length decay factorially fast with depth \cite[Proposition 2.2]{terrystflour}).

\paragraph{Efficient features} Currently, we compute the (truncated) signatures of paths when constructing ARCANE cubature formulae. However, as path signatures contain various algebraic redundancies, we believe that the number of ‘signature’ features used in our pipeline can be significantly reduced. For example, we expect that this feature reduction could be achieved by using log-signatures or by ignoring certain iterated integrals in the signature. In any case, such a feature reduction would particularly speed up the recombination step---which scales cubically with the number of features.

\paragraph{Long time horizons}
The reduction in accuracy when simulating SDEs over longer time horizons with cubatures is well-known. One possible solution is to cut the time interval into some number $N$ of equally sized sub-intervals, and apply the cubature formula on each of them.
However, if the cubature formula has $M$ paths, then applying it $N$ times would result in $M^N$ paths. To avoid this exponential blow-up, \cite{litterer2012cubature} proposed to use the recombination algorithm on the remaining paths after each sub-interval to reduce the number of particles that represent the SDE solution.
The challenge in this setting is to then identify a suitable collection of test functions so that the reduced collection of particles obtained by recombination remains accurate. We refer the reader to \cite{ninomiya2025cubature} for a recent article on applying cubature formulae to SDE simulation in this manner. In \cite{ninomiya2025cubature}, the authors apply (high-order) recombination along with space partitioning to reduce the total number of particles used by their algorithm.
A separate issue of reducing the number of particles during the SDE simulation is that it may be incompatible with automatic differentiation (e.g.~for Neural SDEs \cite{li2020NSDEs, kidger2021neural, kidger2021efficient}).
Nevertheless, applying our new cubature formulae in this iterative fashion is left as a topic of future work.

\section{Acknowledgements}

PK acknowledges support from EPSRC grant EP/W523781/1 and from the Department of Statistics at the University of Oxford.
TC acknowledges the support of the Department of Aeronautical and Automotive Engineering at Loughborough University.
JF acknowledges the support of the Department of Mathematical Sciences at the University of Bath.

\printbibliography

@article{albrecher2007little,
  title     = {The Little Heston Trap},
  author    = {Albrecher, Hansj{\"o}rg and Mayer, Philipp and Schoutens, Wim and Tistaert, Jurgen},
  journal   = {Wilmott Magazine},
  volume    = {2007},
  number    = {1},
  pages     = {83--92},
  year      = {2007},
  month     = {01},
  publisher = {Wiley}
}

@article{alfonsi2005cir,
  author  = {Alfonsi, Aur\'{e}lien},
  title   = {{On the discretization schemes for the CIR (and Bessel squared) processes}},
  journal = {{Monte Carlo Methods and Applications}},
  pages   = {355--384},
  volume  = {11},
  number  = {4},
  year    = {2005}
}

@article{alfonsi2010cir,
  author  = {Alfonsi, Aur\'{e}lien},
  title   = {{High order discretization schemes for the CIR process:
             Application to Affine Term Structure and Heston models}},
  journal = {{Mathematics of Computation}},
  volume  = {79},
  number  = {269},
  pages   = {209--237},
  year    = {2010}
}

@article{alfonsi2013cir,
  author  = {Alfonsi, Aur\'{e}lien},
  title   = {{Strong order one convergence of a drift implicit Euler scheme: Application to the CIR process}},
  journal = {{Statistics \& Probability Letters}},
  pages   = {602--607},
  volume  = {83},
  number  = {2},
  year    = {2013}
}

@article{arenaslopez2020wind,
  title   = {{A Fokker–Planck equation based approach for modelling wind speed and its power output}},
  author  = {Arenas-L\'{o}pez, J. Pablo and Badaoui, Mohamed},
  journal = {{Energy Conversion and Management}},
  volume  = {222},
  year    = {2020}
}

@inproceedings{badosa2018solar,
  author    = {Badosa, Jordi and Gobet, Emmanuel and Grangereau, Maxime and Kim, Daeyoung},
  title     = {{Day-Ahead Probabilistic Forecast of Solar Irradiance: A Stochastic Differential Equation Approach}},
  booktitle = {{Renewable Energy: Forecasting and Risk Management}},
  pages     = {73--93},
  year      = {2018}
}

@software{baryx,
  author  = {Thomas Coxon},
  title   = {{Baryx: Barycenter preserving measure reduction in JAX}},
  url     = {https://github.com/datasig-ac-uk/baryx},
  version = {0.1.0},
  year    = {2026}
}

@incollection{BCH,
  title     = {{9 BCH codes}},
  editor    = {F.~J. MacWilliams and N.~J.~A. Sloane},
  series    = {North-Holland Mathematical Library},
  publisher = {Elsevier},
  volume    = {16},
  pages     = {257-293},
  year      = {1977},
  booktitle = {The Theory of Error-Correcting Codes},
  issn      = {0924-6509},
  doi       = {10.1016/S0924-6509(08)70534-8}
}

@article{boedihardjo2016,
  title   = {{The signature of a rough path: Uniqueness}},
  journal = {Advances in Mathematics},
  volume  = {293},
  pages   = {720-737},
  year    = {2016},
  issn    = {0001-8708},
  url     = {https://www.sciencedirect.com/science/article/pii/S0001870816301104},
  author  = {Horatio Boedihardjo and Xi Geng and Terry Lyons and Danyu Yang}
}

@article{bose-ray1960,
  title   = {On a class of error correcting binary group codes},
  journal = {Information and Control},
  volume  = {3},
  number  = {1},
  pages   = {68-79},
  year    = {1960},
  issn    = {0019-9958},
  url     = {https://www.sciencedirect.com/science/article/pii/S0019995860902874},
  author  = {R.~C. Bose and D.~K. Ray-Chaudhuri}
}

@article{cai2015sirs,
  title   = {{A stochastic SIRS epidemic model with infectious force under intervention strategies}},
  author  = {Yongli Cai and Yun Kang and Malay Banerjee and Weiming Wang},
  journal = {Journal of Differential Equations},
  volume  = {259},
  number  = {12},
  pages   = {7463--7502},
  year    = {2015}
}

@article{capriotti2018igbm,
  author  = {Capriotti, Luca and Jiang, Yupeng and Shaimerdenova, Gaukhar},
  title   = {{Approximation Methods for Inhomogeneous Geometric Brownian Motion}},
  journal = {International Journal of Theoretical and Applied Finance},
  volume  = {22},
  number  = {2},
  year    = {2019}
}

@article{chaabane2025solar,
  author  = {Chaabane, Khaoula Ben and Kebaier, Ahmed and Scavino, Marco and Tempone, Ra\'{u}l},
  title   = {{Data-driven uncertainty quantification for constrained stochastic differential equations and application to solar photovoltaic power forecast data}},
  journal = {Statistics and Computing},
  volume  = {35},
  number  = {163},
  year    = {2025}
}

@misc{cheltsov2025sampling,
  title         = {{Hadamard Langevin dynamics for sampling the l1-prior}},
  author        = {Cheltsov, Ivan and Cornalba, Federico and Poon, Clarice and Shardlow, Tony},
  year          = {2025},
  eprint        = {2411.11403},
  archiveprefix = {arXiv},
  primaryclass  = {math.NA}
}

@misc{chen2025wealth,
  title         = {{Stochastic Analysis of Overlapping Generations Models Under Incomplete Markets}},
  author        = {Chen, Cangxiong and Ellingsrud, Sigmund and Harang, Fabian and Irarrazabal, Alfonso and Mayorcas, Avi},
  year          = {2025},
  eprint        = {2509.05170},
  archiveprefix = {arXiv},
  primaryclass  = {math.PR}
}

@article{chevyrev2016,
  title     = {Characteristic functions of measures on geometric rough paths},
  author    = {Chevyrev, Ilya and Lyons, Terry},
  year      = {2016},
  month     = nov,
  journal   = {The Annals of Probability},
  volume    = {44},
  number    = {6},
  publisher = {Institute of Mathematical Statistics},
  issn      = {0091-1798}
}

@article{chevyrev2022signature,
  author  = {Ilya Chevyrev and Harald Oberhauser},
  title   = {{Signature Moments to Characterize Laws of Stochastic Processes}},
  journal = {Journal of Machine Learning Research},
  volume  = {23},
  number  = {176},
  pages   = {1--42},
  year    = {2022},
  url     = {https://jmlr.org/papers/v23/20-1466.html}
}

@book{cobb1981socialsdes,
  title     = {{Mathematical Frontiers Of The Social And Policy Sciences}},
  author    = {Cobb, Loren},
  publisher = {Routledge},
  year      = {1981},
  address   = {New York}
}

@article{cox1985cir,
  author  = {Cox, John C. and Ingersoll, Jonathan E. and Ross, Stephen A.},
  title   = {{A Theory of the Term Structure of Interest Rates}},
  journal = {{Econometrica}},
  volume  = {53},
  number  = {2},
  pages   = {385--407},
  year    = {1985}
}

@article{cozma2020cir,
  author  = {Cozma, Andrei and Reisinger, Christoph},
  title   = {{Strong order 1/2 convergence of full truncation Euler approximations to the Cox–Ingersoll–Ross process}},
  journal = {{IMA Journal of Numerical Analysis}},
  pages   = {358--376},
  volume  = {40},
  number  = {1},
  year    = {2020}
}

@misc{crisostomo2015heston,
  title         = {{An Analysis of the Heston Stochastic Volatility Model: Implementation and Calibration using Matlab}},
  author        = {Crisostomo, Ricardo},
  year          = {2015},
  eprint        = {1502.02963},
  archiveprefix = {arXiv},
  primaryclass  = {q-fin.PR}
}

@article{czuppon2021genetics,
  author  = {Czuppon, Peter and Traulsen, Arne},
  title   = {{Understanding evolutionary and ecological dynamics using a continuum limit}},
  journal = {{Ecology and Evolution}},
  volume  = {11},
  number  = {11},
  pages   = {5857--5873},
  year    = {2021}
}

@article{delsarte-goethals,
  title   = {{Alternating bilinear forms over GF(q)}},
  author  = {P. Delsarte and J.~M. Goethals},
  journal = {Journal of Combinatorial Theory, Series A},
  volume  = {19},
  number  = {1},
  pages   = {26-50},
  year    = {1975}
}

@article{delsarte1973algebraic,
  title   = {An algebraic approach to the association schemes of coding theory},
  author  = {Delsarte, Philippe},
  journal = {Philips Res. Rep. Suppl.},
  volume  = {10},
  year    = {1973}
}

@article{dereich2011cir,
  author  = {Dereich, Steffen and Neuenkirch, Andreas and Szpruch, Lukasz},
  title   = {{An Euler-type method for the strong approximation of the Cox–Ingersoll–Ross process}},
  journal = {{Proceedings of the Royal Society A: Mathematical, Physical and Engineering Sciences}},
  volume  = {468},
  number  = {2140},
  pages   = {1105--1115},
  year    = {2011}
}

@article{donskercubature,
  title     = {Cubature on Wiener space: pathwise convergence},
  author    = {Bayer, Christian and Friz, Peter K},
  journal   = {Applied Mathematics \& Optimization},
  volume    = {67},
  number    = {2},
  pages     = {261--278},
  year      = {2013},
  publisher = {Springer}
}

@article{donskerRW,
  title   = {From random walks to rough paths},
  author  = {Breuillard, Emmanuel and Friz, Peter and Huesmann, Martin},
  journal = {Proceedings of the American Mathematical Society},
  volume  = {137},
  number  = {10},
  pages   = {3487--3496},
  year    = {2009}
}

@book{etheridge2011genetics,
  title     = {Some Mathematical Models from Population Genetics},
  author    = {Etheridge, Alison},
  year      = {2011},
  publisher = {Springer},
  address   = {Berlin, Heidelberg},
  series    = {Lecture Notes in Mathematics},
  volume    = {2012},
  doi       = {10.1007/978-3-642-16632-7}
}

@book{ewens2004genetics,
  title     = {Mathematical Population Genetics: I. Theoretical Introduction},
  author    = {Ewens, Warren J.},
  year      = {2004},
  edition   = {2nd},
  publisher = {Springer},
  address   = {New York},
  series    = {Interdisciplinary Applied Mathematics},
  volume    = {27},
  doi       = {10.1007/978-0-387-21822-9}
}

@article{feller1951diffusion,
  author  = {Feller, William},
  title   = {Two Singular Diffusion Problems},
  journal = {Annals of Mathematics},
  volume  = {54},
  number  = {1},
  pages   = {173--182},
  year    = {1951},
  doi     = {10.2307/1969318}
}

@article{ferrucci2026cubature,
  author  = {Ferrucci, Emilio and Herschell, Timothy and Litterer, Christian and Lyons, Terry},
  title   = {{High-degree cubature on Wiener space through unshuffle expansions}},
  journal = {Proceedings of the Royal Society A: Mathematical, Physical and Engineering Sciences},
  volume  = {482},
  number  = {2330},
  year    = {2026}
}

@misc{foster2024adaptive,
  title         = {{On the convergence of adaptive approximations for stochastic differential equations}},
  author        = {James Foster and Andra\v{z} Jelin\v{c}i\v{c}},
  year          = {2024},
  eprint        = {2311.14201},
  archiveprefix = {arXiv},
  primaryclass  = {math.NA}
}

@article{foster2024highSIAM,
  author  = {Foster, James M. and dos Reis, Gon\c{c}alo and Strange, Calum},
  title   = {{High Order Splitting Methods for SDEs Satisfying a Commutativity Condition}},
  journal = {SIAM Journal on Numerical Analysis},
  volume  = {62},
  number  = {1},
  pages   = {500--532},
  year    = {2024}
}

@inbook{gyurko2011cubature,
  author    = {Gyurk{\'o}, Lajos Gergely and Lyons, Terry},
  title     = {{Efficient and Practical Implementations of Cubature on Wiener Space}},
  booktitle = {Stochastic Analysis 2010},
  year      = {2011},
  publisher = {Springer},
  address   = {Berlin Heidelberg},
  pages     = {73--111}
}

@article{halton,
  author  = {Halton, J. H.},
  title   = {On the efficiency of certain quasi-random sequences of points in evaluating multi-dimensional integrals},
  journal = {Numerische Mathematik},
  volume  = {2},
  pages   = {84--90},
  year    = {1960}
}

@article{hammons94,
  title   = {{The Z/sub 4/-linearity of Kerdock, Preparata, Goethals, and related codes}},
  author  = {Hammons, A.~R. and Kumar, P.~V. and Calderbank, A.~R. and Sloane, N.~J.~A. and Sole, P.},
  journal = {IEEE Transactions on Information Theory},
  year    = {1994},
  volume  = {40},
  number  = {2},
  pages   = {301-319}
}

@article{hayakawa2022cubature,
  author  = {Hayakawa, Satoshi and Tanaka, Ken'ichiro},
  title   = {{Monte Carlo construction of cubature on Wiener space}},
  journal = {Japan Journal of Industrial and Applied Mathematics},
  volume  = {39},
  number  = {2},
  pages   = {543--571},
  year    = {2022}
}

@book{hedayat1999,
  title     = {Orthogonal Arrays: Theory and Applications},
  author    = {Hedayat, A.~S. and Sloane, N.~J.~A. and Stufken, John},
  year      = {1999},
  publisher = {Springer},
  series    = {Springer Series in Statistics},
  address   = {New York}
}

@book{hedayat2012,
  title     = {Orthogonal arrays: theory and applications},
  author    = {Hedayat, A Samad and Sloane, Neil James Alexander and Stufken, John},
  year      = {2012},
  publisher = {Springer Science \& Business Media}
}

@article{hefter2019cir,
  author  = {Hefter, Mario and Jentzen, Arnulf},
  title   = {{On arbitrarily slow convergence rates for strong numerical approximations of Cox-Ingersoll-Ross processes and squared Bessel processes}},
  journal = {{Finance and Stochastics}},
  volume  = {23},
  pages   = {139-172},
  year    = {2019}
}

@article{heston1993,
  author  = {Heston, Steven L.},
  title   = {{A Closed-Form Solution for Options with Stochastic Volatility with Applications to Bond and Currency Options}},
  journal = {{The Review of Financial Studies}},
  pages   = {327--343},
  volume  = {6},
  number  = {2},
  year    = {1993}
}

@article{hill73,
  author  = {Hill, Raymond},
  title   = {{On the largest size of cap in $S_{53}$}},
  journal = {Rendiconti del Seminario Matematico della Universit\`{a} di Padova},
  volume  = {54},
  pages   = {378--380},
  year    = {1973},
  url     = {https://www.bdim.eu/item?id=RLINA_1973_8_54_3_378_0}
}

@incollection{hill83,
  title     = {{On Pellegrino's 20-Caps in $S_{43}$}},
  author    = {Hill, Raymond},
  series    = {North-Holland Mathematics Studies},
  publisher = {North-Holland},
  volume    = {78},
  pages     = {433-447},
  year      = {1983},
  booktitle = {Combinatorics '81 in honour of Beniamino Segre},
  address   = {Amsterdam}
}

@article{huang2019,
  title     = {Sidon sets and 2-caps in $\mathbb{F}_3^n$},
  author    = {Huang, Yixuan and Tait, Michael and Won, Robert},
  volume    = {12},
  issn      = {1944--4176},
  number    = {6},
  journal   = {Involve, a Journal of Mathematics},
  publisher = {Mathematical Sciences Publishers},
  year      = {2019},
  pages     = {995--1003}
}

@book{huffman2003,
  title     = {Fundamentals of Error-Correcting Codes},
  author    = {Huffman, W. Cary and Pless, Vera},
  year      = {2003},
  publisher = {Cambridge University Press},
  address   = {Cambridge}
}

@article{iversen2014solar,
  author  = {Iversen, Emil B. and Morales, Juan M. and M{\o}ller, Jan K. and Madsen, Henrik},
  title   = {{Probabilistic forecasts of solar irradiance using stochastic differential equations}},
  journal = {Environmetrics},
  volume  = {25},
  number  = {3},
  pages   = {152--164},
  year    = {2014}
}

@software{jax2018github,
  author  = {James Bradbury and Roy Frostig and Peter Hawkins and Matthew James Johnson and Chris Leary and Dougal Maclaurin and George Necula and Adam Paszke and Jake Vander{P}las and Skye Wanderman-{M}ilne and Qiao Zhang},
  title   = {{JAX}: composable transformations of {P}ython+{N}um{P}y programs},
  url     = {https://github.com/jax-ml/jax},
  version = {0.3.13},
  year    = {2018}
}

@misc{jovanovski2025SBI,
  title         = {{Simulation-based inference using splitting schemes for partially observed diffusions in chemical reaction networks}},
  author        = {Jovanovski, Petar and Golightly, Andrew and Picchini, Umberto and Tamborrino, Massimiliano},
  year          = {2025},
  eprint        = {2508.11438},
  archiveprefix = {arXiv},
  primaryclass  = {stat.ME}
}

@article{keefe96,
  title   = {{Ovoids in PG(3, q): a survey}},
  author  = {Christine M. O'Keefe},
  journal = {Discrete Mathematics},
  volume  = {151},
  number  = {1},
  pages   = {175--188},
  year    = {1996},
  issn    = {0012-365X}
}

@article{kelly2022cir,
  author  = {Kelly, C\'{o}nall and Lord, Gabriel and Maulana, Heru},
  title   = {{The role of adaptivity in a numerical method for the Cox–Ingersoll–Ross model}},
  journal = {{Journal of Computational and Applied Mathematics}},
  volume  = {410},
  year    = {2022}
}

@article{kerdock,
  title   = {A class of low-rate nonlinear binary codes},
  author  = {A.~M. Kerdock},
  journal = {Information and Control},
  volume  = {20},
  number  = {2},
  pages   = {182-187},
  year    = {1972},
  issn    = {0019-9958}
}

@inproceedings{kidger2021efficient,
  author    = {Kidger, Patrick and Foster, James and Li, Xuechen and Lyons, Terry},
  booktitle = {{Advances in Neural Information Processing Systems}},
  pages     = {18747--18761},
  title     = {{Efficient and Accurate Gradients for Neural SDEs}},
  volume    = {34},
  year      = {2021}
}

@article{kidger2021neural,
  title   = {{Neural SDEs as Infinite-Dimensional GANs}},
  author  = {Kidger, Patrick and Foster, James and Li, Xuechen and Lyons, Terry J},
  journal = {Proceedings of the 38th International Conference on Machine Learning},
  pages   = {5453--5463},
  year    = {2021}
}

@book{KP1992,
  author    = {Kloeden, P.E. and Platen, E.},
  publisher = {Springer-Verlag},
  series    = {Applications of Mathematics, Stochastic Modelling and Applied Probability, 23},
  title     = {Numerical solution of stochastic differential equations },
  year      = {1992}
}

@article{latin,
  author  = {McKay, M. D. and Beckman, R. J. and Conover, W. J.},
  title   = {A comparison of three methods for selecting values of input variables in the analysis of output from a computer code},
  journal = {Technometrics},
  volume  = {21},
  number  = {2},
  pages   = {239--245},
  year    = {1979}
}

@inproceedings{li2020NSDEs,
  title     = {{Scalable Gradients for Stochastic Differential Equations}},
  author    = {Li, Xuechen and Wong, Ting-Kam Leonard and Chen, Ricky T. Q. and Duvenaud, David},
  booktitle = {Proceedings of the 23rd International Conference on Artificial Intelligence and Statistics},
  pages     = {3870--3882},
  year      = {2020},
  volume    = {108}
}

@article{li2023aircraft,
  author  = {Haiquan Li and Xiaoqian Chen and Jiatu Zhang and Bochen Wang and Jiahui Peng and Liang Wang},
  title   = {{Identification of a Stochastic Dynamic Model for Aircraft Flight Attitude Based on Measured Data}},
  journal = {International Journal of Aerospace Engineering},
  volume  = {2023},
  number  = {1},
  year    = {2023}
}

@article{litterer2012cubature,
  author  = {Litterer, Christian and Lyons, Terry},
  title   = {{High order recombination and an application to cubature on Wiener space}},
  journal = {Annals of Applied Probability},
  year    = {2012},
  volume  = {22},
  number  = {4},
  pages   = {1301--1327}
}

@article{liu2021aircraft,
  author  = {Ying Liu and Zhiyao Zhao and Haibiao Ma and Quan Quan},
  title   = {A stochastic approximation method for probability prediction of docking success for aerial refueling},
  journal = {Applied Soft Computing},
  volume  = {103},
  year    = {2021}
}

@article{lord2010truncation,
  author    = {Roger Lord and Remmert Koekkoek and Dick Van Dijk},
  title     = {A comparison of biased simulation schemes for stochastic volatility models},
  journal   = {Quantitative Finance},
  volume    = {10},
  number    = {2},
  pages     = {177--194},
  year      = {2010},
  publisher = {Routledge}
}

@article{lyons2004cubature,
  author  = {Lyons, Terry and Victoir, Nicolas},
  title   = {{Cubature on Wiener Space}},
  journal = {Proceedings of the Royal Society A: Mathematical, Physical and Engineering Sciences},
  volume  = {460},
  number  = {2041},
  pages   = {169--198},
  year    = {2004}
}

@book{lyonsqian,
  title     = {System control and rough paths},
  author    = {Lyons, Terry and Qian, Zhongmin},
  year      = {2002},
  publisher = {Oxford University Press},
  address   = {Oxford}
}

@inproceedings{maki2013sir,
  author    = {Maki, Yoshihiro and Hirose, Hideo},
  booktitle = {2013 4th International Conference on Intelligent Systems, Modelling and Simulation},
  title     = {{Infectious Disease Spread Analysis Using Stochastic Differential Equations for SIR Model}},
  year      = {2013},
  pages     = {152--156}
}

@inproceedings{malyarenko2022cubature,
  author    = {Malyarenko, Anatoliy and Nohrouzian, Hossein},
  title     = {{Testing Cubature Formulae on Wiener Space Versus Explicit Pricing Formulae}},
  booktitle = {Stochastic Processes, Statistical Methods, and Engineering Mathematics},
  publisher = {Springer International Publishing},
  address   = {Cham},
  pages     = {223--248},
  year      = {2022}
}

@incollection{meser2016wrightfisher,
  title     = {{Neutral Models of Genetic Drift and Mutation}},
  editor    = {Richard M. Kliman},
  booktitle = {Encyclopedia of Evolutionary Biology},
  publisher = {Academic Press},
  address   = {Oxford},
  pages     = {119--123},
  year      = {2016},
  isbn      = {978-0-12-800426-5},
  author    = {P.~W. Messer}
}

@article{milstein2015cir,
  author  = {Milstein, Grigori N. and Schoenmakers, John},
  title   = {{Uniform approximation of the Cox-Ingersoll-Ross process}},
  journal = {{Advances in Applied Probability}},
  volume  = {47},
  number  = {4},
  pages   = {1132-1156},
  year    = {2015}
}

@book{niederreiterqmc,
  author    = {H. Niederreiter},
  title     = {Random Number Generation and Quasi-Monte Carlo Methods},
  series    = {CBMS–NSF Regional Conference Series in Applied Mathematics},
  volume    = {63},
  year      = {1992},
  publisher = {SIAM}
}

@misc{ninomiya2025cubature,
  title         = {{A high-order recombination algorithm for weak approximation of stochastic differential equations}},
  author        = {Ninomiya, Syoiti and Shinozaki, Yuji},
  year          = {2025},
  eprint        = {2504.19717},
  archiveprefix = {arXiv},
  primaryclass  = {math.PR}
}

@article{ninomiyavictoir2008,
  author  = {Ninomiya, Syoiti and Victoir, Nicolas},
  title   = {{Weak Approximation of Stochastic Differential Equations and Application to Derivative Pricing}},
  journal = {Applied Mathematical Finance},
  volume  = {15},
  number  = {2},
  pages   = {107--121},
  year    = {2008}
}

@misc{nohrouzian,
  title         = {Constructing Trinomial Models Based on Cubature Method on Wiener Space: Applications to Pricing Financial Derivatives},
  author        = {Hossein Nohrouzian and Anatoliy Malyarenko and Ying Ni},
  year          = {2022},
  eprint        = {2204.10692},
  archiveprefix = {arXiv},
  primaryclass  = {q-fin.MF}
}

@article{ornstein1930,
  title   = {{On the Theory of the Brownian Motion}},
  author  = {Uhlenbeck, G.~E. and Ornstein, L.~S.},
  journal = {Physical Review},
  volume  = {36},
  issue   = {5},
  pages   = {823--841},
  year    = {1930}
}

@software{orray,
  author  = {Peter Koepernik},
  title   = {{orray: Orthogonal Arrays in JAX}},
  url     = {https://github.com/peter-koepernik/orray},
  version = {0.1.0},
  year    = {2025}
}

@article{potechin2008,
  author  = {Potechin, Aaron},
  title   = {Maximal caps in {AG}(6,3)},
  journal = {Designs, Codes and Cryptography},
  year    = {2008},
  month   = {03},
  day     = {01},
  volume  = {46},
  number  = {3},
  pages   = {243--259},
  issn    = {1573-7586},
  doi     = {10.1007/s10623-007-9132-z}
}

@article{rao1947,
  issn      = {14666162},
  url       = {https://www.jstor.org/stable/2983576},
  author    = {C. Radhakrishna Rao},
  journal   = {Supplement to the Journal of the Royal Statistical Society},
  number    = {1},
  pages     = {128--139},
  publisher = {[Oxford University Press, Royal Statistical Society]},
  title     = {Factorial Experiments Derivable from Combinatorial Arrangements of Arrays},
  urldate   = {2025-02-18},
  volume    = {9},
  year      = {1947}
}

@article{rehman2023optics,
  title   = {{Analysis of Brownian motion in stochastic Schrödinger wave equation using Sardar sub-equation method}},
  author  = {Rehman, Hamood Ur and Akber,Rehan and  Wazwaz, Abdul-Majid and Alshehri, Hashim M. and Osman, M. S.},
  journal = {Optik},
  volume  = {289},
  year    = {2023}
}

@misc{scott2025langevin,
  title         = {{Underdamped Langevin MCMC with third order convergence}},
  author        = {Maximilian Scott and D\'{a}ire O'Kane and Andra\v{z} Jelin\v{c}i\v{c} and James Foster},
  year          = {2025},
  eprint        = {2508.16485},
  archiveprefix = {arXiv},
  primaryclass  = {stat.ML}
}

@article{shinozaki2017cubature,
  author  = {Shinozaki, Yuji},
  title   = {{Construction of a Third-Order K-Scheme and Its Application to Financial Models}},
  journal = {SIAM Journal on Financial Mathematics},
  volume  = {8},
  number  = {1},
  pages   = {901--932},
  year    = {2017}
}

@book{shortrate,
  author    = {Björk, Tomas},
  title     = {Arbitrage Theory in Continuous Time},
  publisher = {Oxford University Press},
  year      = {2019},
  month     = {12},
  isbn      = {9780198851615},
  doi       = {10.1093/oso/9780198851615.001.0001}
}

@article{sobol,
  author  = {Sobol', I. M.},
  title   = {On the distribution of points in a cube and the approximate evaluation of integrals},
  journal = {USSR Computational Mathematics and Mathematical Physics},
  volume  = {7},
  number  = {4},
  pages   = {86--112},
  year    = {1967}
}

@inproceedings{song2021scorebased,
  title     = {{Score-Based Generative Modeling through Stochastic Differential Equations}},
  author    = {Yang Song and Jascha Sohl-Dickstein and Diederik P Kingma and Abhishek Kumar and Stefano Ermon and Ben Poole},
  booktitle = {International Conference on Learning Representations},
  year      = {2021}
}

@article{stinson2008,
  author     = {Stinson, Douglas R.},
  title      = {Combinatorial designs: constructions and analysis},
  year       = {2008},
  issue_date = {December 2008},
  publisher  = {Association for Computing Machinery},
  address    = {New York, NY, USA},
  volume     = {39},
  number     = {4},
  issn       = {0163-5700},
  doi        = {10.1145/1466390.1466393},
  journal    = {SIGACT News},
  pages      = {17–21},
  numpages   = {5}
}

@phdthesis{tchernychova2015cubature,
  title  = {{Carath\'{e}odory cubature measures}},
  author = {Tchernychova, Maria},
  school = {University of Oxford},
  year   = {2015}
}

@book{terrystflour,
  title     = {Differential equations driven by rough paths},
  author    = {Lyons, Terry J and Caruana, Michael and L{\'e}vy, Thierry},
  year      = {2007},
  publisher = {Springer},
  address   = {Berlin}
}

@article{tubikanec2022igbm,
  author  = {Tubikanec, Irene and Tamborrino, Massimiliano and Lansky, Petr and Buckwar, Evelyn},
  title   = {{Qualitative properties of different numerical methods for the inhomogeneous geometric Brownian motion}},
  journal = {Journal of Computational and Applied Mathematics},
  volume  = {406},
  year    = {2022}
}

@article{vaisband2025nsdes,
  title   = {{Loss formulations for assumption-free neural inference of SDE coefficient functions}},
  author  = {Vaisband, Marc and von Bornhaupt, Valentin and Schmid, Nina and Abulizi, Izdar and Hasenauer, Jan},
  journal = {{Nature Partner Journal: Systems Biology and Applications}},
  volume  = {11},
  number  = {22},
  year    = {2025}
}

@article{vasicek1977,
  title   = {An equilibrium characterization of the term structure},
  journal = {Journal of Financial Economics},
  volume  = {5},
  number  = {2},
  pages   = {177-188},
  year    = {1977},
  issn    = {0304-405X},
  url     = {https://www.sciencedirect.com/science/article/pii/0304405X77900162},
  author  = {Oldrich Vasicek}
}

@article{victoirOA,
  author  = {Victoir, Nicolas},
  title   = {Asymmetric Cubature Formulae with Few Points in High Dimension for Symmetric Measures},
  journal = {SIAM Journal on Numerical Analysis},
  volume  = {42},
  number  = {1},
  pages   = {209-227},
  year    = {2004},
  doi     = {10.1137/S0036142902407952}
}

@article{wang2022sdes,
  title   = {{Data-Driven Discovery of Stochastic Differential Equations}},
  author  = {Wang, Yasen and Fang, Huazhen and Jin, Junyang and Ma, Guijun and He, Xin and Dai, Xing and Yue, Zuogong and Cheng, Cheng and Zhang, Hai-Tao and Pu, Donglin and Wu, Dongrui and Yuan, Ye and Gon\c{c}alves, Jorge and Kurths, J\"{u}rgen and Ding, Han},
  journal = {{Engineering}},
  volume  = {17},
  pages   = {244--252},
  year    = {2022}
}

@article{wangqmc,
  author    = {X. Wang},
  title     = {Low discrepancy sequences in high dimensions: How well do they integrate?},
  journal   = {Journal of Computational and Applied Mathematics},
  volume    = {215},
  number    = {2},
  pages     = {301--314},
  year      = {2008},
  publisher = {Elsevier}
}
\begin{appendices}
    \counterwithin{theorem}{section}
    \counterwithin{lemma}{section}
    \setcounter{theorem}{0}
    \setcounter{lemma}{0}
    \clearpage
\section{Example SDEs}\label{app:sdes}
This appendix provides the details for all the example SDEs used in our experiments.
In all the examples, we first consider the It\^{o} formulation of the SDE to obtain reference values. We then convert the SDE into Stratonovich form to perform the simulations.

Throughout, we will use the following shorthand notation:
\begin{itemize}
\item We use $\E_{x_0}[X_t]$ for $\E[X_t \mid X_0 = x_0]$, the expectation of $X_t$ conditional on $X_0 = x_0$.\medbreak
\item We use $\V_{x_0}[X_t]$ for $\var(X_t \mid X_0 = x_0)$, the variance of $X_t$ conditional on $X_0 = x_0$.\medbreak
\end{itemize}

\subsection{Ornstein--Uhlenbeck (Vasicek) process}\label{app:ou}
The Ornstein--Uhlenbeck (OU) or Vasicek process is a Gaussian process 
originally developed as a model for the mean values of the velocity and displacement of a free particle in Brownian motion \cite{ornstein1930}.
Beyond the usual applications in the natural sciences, the OU process has been applied in mathematical finance as a mean-reverting one-factor short-rate model (i.e.~a model in which the interest rate is driven by a single source of randomness).
For derivative pricing, this is also known as the Vasicek model \cite{vasicek1977}.
More recently, multidimensional OU processes have seen applications in data science as the forward noising processes in SDE-based generative diffusion models \cite{song2021scorebased}.

\subsubsection*{Definition}
Let $a > 0$, $b \in \R$, and $\sigma > 0$. Then the OU process with mean reversion rate $a$, long-term mean $b$, and volatility $\sigma$ is
\begin{equation}\label{eq:ou_ito}
    \diff X_t = a\,(b - X_t) \diff t + \sigma \diff W_t,
\end{equation}
where $t > 0$ and $W_t$ is a standard Brownian motion.
In Stratonovich form, it becomes
\begin{equation}\label{eq:ou_strat}
    \diff X_t = a_\text{strat}\,(b_\text{strat} - X_t) \diff t + \sigma \circ \diff W_t,
\end{equation}
where
\begin{equation*}
a_{\text{strat}} \coloneqq a,\qquad
  b_{\text{strat}} \coloneqq b,
\end{equation*}
which coincides with the It\^{o} form since the diffusion coefficient is constant.

\subsubsection*{Default parameters}
Unless otherwise specified, our results for the OU process use the parameters:
\[X_{0}=2.0,\ a=1.0,\ b=3.0,\ \sigma=0.5,\ T = 1.0.\]

\subsubsection*{Reference values}
All errors quoted in our results for the OU process are with respect to the following theoretically derived reference values:

\begin{enumerate}
    \item Conditional mean \cite{ornstein1930}: \begin{equation}
       \E_{x_0}[X_t] = b + (x_0 - b)\exp(-a t).
    \end{equation}
    \item Conditional variance \cite{ornstein1930}: \begin{equation}
        \V_{x_0}[X_t] = \frac{\sigma^2}{2a} \bigl( 1 - \exp(-2a t)\bigr).
    \end{equation}
    \item Conditional distribution \cite{ornstein1930}:
    \begin{equation}
        (X_t \mid X_0) \sim \NN\bigl(\E_{x_0}[X_t], \V_{x_0}[X_t]\bigr).
    \end{equation}
    \item Price at time $s$ of a zero-coupon bond maturing at time $T$ \cite{vasicek1977}: \begin{equation}
        \E\Biggl[\exp\biggl(-\int_s^T X_t \diff t\biggr)\biggm| X_s\Biggr] = A(s, T) \exp\bigl(-X_s B(s, T)\bigr),
    \end{equation}
    where
    \begin{align*}
        A(s, T) &= \exp\biggl(\gamma\, \bigl(B(s, T) - (T - s)\bigr) - \frac{\sigma^2}{4a} B(s, T)^2\biggr),\\[3pt]
        B(s, T) &= \frac{1 - \exp\bigl(-a\,(T - s)\bigr)}{a},\qquad
        \gamma = b - \frac{\sigma^2}{2a^2}.
    \end{align*}
\end{enumerate}

\subsubsection*{Numerical considerations}
We do not take any special numerical considerations when simulating the OU process.

\subsection{Inhomogeneous Geometric Brownian Motion}\label{app:igbm}
Inhomogeneous geometric Brownian motion (IGBM) is another example of a one-factor short-rate model used in mathematical finance \cite{capriotti2018igbm}.
However, unlike the OU process, IGBM has multiplicative noise and remains non-negative for all $t\geq 0$.
It is therefore suitable for modelling interest rates, stochastic volatilities and default intensities.

From a theoretical viewpoint, IGBM is also one of the simplest SDEs that has no known method of exact simulation.
As a consequence, it has been used as a test problem to study the properties of numerical methods for Monte Carlo simulation \cite{tubikanec2022igbm}.

\subsubsection*{Definition}
Let $a > 0$, $b > 0$, and $\sigma > 0$. The inhomogeneous geometric Brownian motion (IGBM) with mean reversion rate $a$, long-term mean $b$, and volatility $\sigma$ is given by
\begin{equation}\label{eq:igbm_ito}
    \diff X_t = a\,(b - X_t) \diff t + \sigma X_t \diff W_t,
\end{equation}
where $t > 0$ and $W_t$ is a standard Brownian motion.
In Stratonovich form, this becomes
\begin{equation}\label{eq:igbm_strat}
\diff X_t = a_{\text{strat}}\,(b_{\text{strat}} - X_t) \diff t + \sigma X_t \circ \diff W_t,
\end{equation}
where
\begin{equation*}
  a_{\text{strat}} \coloneqq a + \tfrac{1}{2}\sigma^2,\qquad
  b_{\text{strat}} \coloneqq \frac{2ab}{2a + \sigma^2}.
\end{equation*}

\subsubsection*{Default parameters}
Unless otherwise specified, our results for the IGBM process use the parameters:
\[X_{0}=0.06,\ a=0.1,\ b=0.04,\ \sigma=0.6,\ T = 1.0.\]

\subsubsection*{Reference values}
All errors quoted in our results for the IGBM process are with respect to the following theoretically derived reference values:

\begin{enumerate}
    \item Conditional mean \cite{tubikanec2022igbm}: \begin{equation}
        \E_{x_0}[X_t] = b + (x_0 - b)\exp(-a t).
    \end{equation}
    \item Conditional variance \cite{tubikanec2022igbm}: \begin{equation}
    \V_{x_0}[X_t] = \begin{cases}
        \begin{aligned}
        &\exp(-at)\bigl( 2 ab (t x_0 -\tfrac{1}{a} x_0 -t b)+x_0^2 \bigr) \\&\quad - \exp(-2at) (x_0 - b)^2 + b^2, 
        \end{aligned}& \quad \text{if} \ \frac{\sigma^2}{a}=1,\\[15pt]
        \begin{aligned}
        &\exp(-at) \bigl(4b (b - x_0) \bigr) - \exp(-2at) (x_0 - b)^2\\&\quad + 2b^2 a t - 3b^2 + 2bx_0 + x_0^2,
        \end{aligned} & \quad \text{if} \ \frac{\sigma^2}{a}=2, \\[15pt]
        \begin{aligned}
        & \exp\bigl(-(2a - \sigma^2)t\bigr) \bigl(x_0^2 - \tfrac{2ab}{a - \sigma^2} x_0 + \tfrac{2a^2b^2}{(2a - \sigma^2)(a - \sigma^2)} \bigr) + \tfrac{b^2 \sigma^2 }{2a - \sigma^2}\\
        &\quad + \tfrac{2b\sigma^2(x_0 - b)}{a-\sigma^2} \exp(-at) - \exp(-2at) (x_0 - b)^2,
         \end{aligned} & \quad  \text{otherwise}.
    \end{cases}
\end{equation}
\end{enumerate}

\subsubsection*{Numerical considerations}
We do not take any special numerical considerations for simulating the IGBM process.

\subsection{Cox--Ingersoll--Ross Model}\label{app:cir}
The Cox--Ingersoll--Ross (CIR) model is another one-factor short-rate model.
It has similar properties to the IGBM process (e.g.~non-negativity), but is significantly more popular in financial applications (i.e.~interest rates \cite{cox1985cir} and stochastic volatilities \cite{heston1993}).
However, the key advantage of the CIR model is that it leads to analytical formulae for both bond and option prices (when used as the volatility in the Heston model).

Due to its popularity, a wide range of numerical methods have been developed for simulating the CIR model \cite{alfonsi2005cir, alfonsi2010cir, alfonsi2013cir, cozma2020cir, dereich2011cir, foster2024highSIAM, hefter2019cir, kelly2022cir, lord2010truncation, milstein2015cir, ninomiyavictoir2008}.
Recently, one of these numerical methods \cite{dereich2011cir} served as the inspiration for a Langevin SDE simulation algorithm for sampling from non-smooth target distributions (which often arise in Bayesian inverse problems) \cite{cheltsov2025sampling}.

\subsubsection*{Definition}
Let $a > 0$, $b > 0$, and $\sigma > 0$. The Cox--Ingersoll--Ross (CIR) model with mean reversion rate $a$, long-term mean $b$, and volatility $\sigma$ is
\begin{equation}\label{eq:cir_ito}
    \diff X_t = a\,(b - X_t) \diff t + \sigma \sqrt{X_t} \diff W_t,
\end{equation}
where $t >0$ and $W_t$ is a standard Brownian motion \cite{cox1985cir}.
In Stratonovich form, this is
\begin{align}\label{eq:cir_strat}
\diff X_t = a_\text{strat}\,(b_\text{strat} - X_t) \diff t + \sigma\sqrt{X_{t}} \circ \diff W_t,
\end{align}
where
\begin{equation*}
    a_{\text{strat}} \coloneqq a,\qquad
    b_{\text{strat}} \coloneqq b - \frac{\sigma^2}{4a}.
\end{equation*}

\subsubsection*{Default parameters}
Unless otherwise specified, our results for the CIR model use the parameters:
\[X_{0}=2.0,\ a=1.0,\ b=3.0,\ \sigma=0.5, \ T = 1.0.\]

\subsubsection*{Reference values}
All errors quoted in our results for the CIR model are with respect to the following theoretically derived reference values \cite{cox1985cir, feller1951diffusion}:
\begin{enumerate}
    \item Conditional mean \cite{cox1985cir}:
    \begin{equation}
        \E_{x_0}[X_t] = x_0 \exp(-a t) + b \bigl(1 - \exp(-at)\bigr).
    \end{equation}
    \item Conditional variance \cite{cox1985cir}:
    \begin{equation}
        \V_{x_0}[X_t] = \frac{\sigma^2 x_0}{a} \bigl(\exp(-at) - \exp(-2a t)\bigr)  + \frac{\sigma^2 b}{2a} \bigl(1 - \exp(-at)\bigr)^2.
    \end{equation}
    \item Conditional distribution \cite{cox1985cir}:
    \begin{equation}
            (X_t \mid X_0) \sim  \dfrac{\sigma^2}{4 a} (1 - \exp(-a t))  \cdot \chi^2(\delta, \lambda),
    \end{equation}
    where $\chi^2(\delta, \lambda)$ denotes the non-central chi-squared distribution with $\delta$ degrees of freedom and non-centrality parameter $\lambda$,
    \begin{align*}
        \delta = \frac{4 a b}{\sigma^{2}},\qquad \lambda = \frac{4 a x_0}{\sigma^{2}(\exp(a t) - 1)}.
    \end{align*}
    \item Price at time $s$ of a zero-coupon bond maturing at time $T$ \cite{cox1985cir}: \begin{equation}
        \E\Biggl[\exp\biggl(-\int_s^T X_t \diff t\biggr)\biggm| X_s\Biggr] = A(s, T) \exp\bigl(-X_s B(s, T)\bigr),
    \end{equation}
    where
    \begin{align*}
            A(s, T) &= \left( \frac{2 \gamma \exp\Bigl(\frac{(\gamma + a)(T - s)}{2}\Bigr)}{(\gamma - a) + (\gamma + a) \exp\bigl(\gamma (T - s)\bigr)} \right)^{\tfrac{2ab}{\sigma^2}},\\[3pt]
            B(s, T) &= \frac{2\bigl(\exp\bigl(\gamma (T-s)\bigr) - 1\bigr)}{(\gamma - a) + (\gamma + a) \exp\bigl(\gamma (T - s)\bigr)},\qquad
            \gamma = \sqrt{a^2 + 2\sigma^2}.
    \end{align*}
\end{enumerate}

\subsubsection*{Numerical considerations}
When simulating the CIR model using standard methods, it is possible to generate paths whose values are non-positive even if $\sigma^2 \leq 2ab$ (the Feller condition \cite{feller1951diffusion} is met) and $X_0 > 0$.
These negative values prevent the computation of $\sqrt{X_t}$, causing the simulation to fail.
To fix this problem, we instead compute $\sqrt{X_t^+} \coloneqq \sqrt{\max{(X_t, 0)}}$.
This adjustment when simulating the CIR model is similar to the ``full-truncation'' scheme of \cite{lord2010truncation}.

\subsection{Wright-Fisher diffusion with drift}\label{app:wf}
The Wright-Fisher diffusion is a continuous-time approximation to the discrete Wright-Fisher model used in population genetics, where the state variable models the relative frequency of an allele in a given population \cite{ewens2004genetics, etheridge2011genetics, meser2016wrightfisher}.

\subsubsection*{Definition}
Let $s\in \R$ and $\gamma > 0$. Then the Wright-Fisher diffusion with selection coefficient $s$ and variance $\gamma$ is given by
\begin{align*}
\diff X_t = sX_t(1-X_t) \diff t + \sqrt{\gamma X_t(1-X_t)} \diff W_t,
\end{align*}
where $t > 0$ and $W_t$ is a standard Brownian motion.
In Stratonovich form, this becomes
\begin{align*}
\diff X_t = \Bigl(sX_t(1-X_t) - \gamma\Bigl(\frac{1}{4} - \frac{1}{2}X_t\Bigr)\Bigr) \diff t + \sqrt{\gamma X_t(1-X_t)} \circ \diff W_t.
\end{align*}
\subsubsection*{Default parameters}
Unless otherwise specified, our results for the Wright-Fisher diffusion use the parameters:
\[X_{0}=0.5,\ s=0,\ \gamma=1,\ T = 1.0.\]

\subsubsection*{Reference values}
All errors quoted in our results for the Wright-Fisher diffusion are with respect to the following theoretically derived reference values:

\begin{enumerate}
    \item Conditional mean (when $s = 0$) \cite{ewens2004genetics}: \begin{equation}
        \E_{x_0}[X_t] = x_0.
    \end{equation}
    \item Conditional variance (when $s = 0$) \cite{ewens2004genetics}: \begin{equation}
        \V_{x_0}[X_t] = x_0(1-x_0) \bigl(1-\exp(-\gamma t)\bigr).
    \end{equation}
    \item Conditional boundary absorption probability \cite{ewens2004genetics}:
    \begin{align}
        \mathbb{P}_{x_0}(X_\tau = 1) = \begin{cases}x_0, &\quad \text{if } s = 0,\\
        \frac{1 - \exp\bigl(-\frac{2s}{\gamma} x_0\bigr)}{1 - \exp\bigl(-\frac{2s}{\gamma}\bigr)}, &\quad \text{if } s\neq 0,
        \end{cases}
    \end{align}
    where $\tau = \inf \{t > 0\colon X_t \in \{0,1\}\}$ so that $\mathbb{P}(\tau < \infty) = 1$ for any $s\in\mathbb{R}$ and $\gamma > 0$.
\end{enumerate}

\subsubsection*{Numerical considerations}
To satisfy the absorbing boundary condition, we compute the state variable as $X^+_-\coloneqq \min(\max(X, 0), 1)$.

\subsection{Log-Heston model}\label{app:heston}
The Heston model is a well-known stochastic volatility model for option pricing \cite{heston1993}. Equivalent to the Heston model is the log-Heston model, which can be obtained simply by applying a logarithm to the stock price component. Let $a>0$, $b\in\R$, $\sigma>0$, $\mu\in\R$, and $\rho\in[-1,1]$. Then the log-Heston model with mean reversion rate $a$, long-term mean $b$, volatility-of-volatility $\sigma$, risk-free rate $\mu$, and instantaneous correlation $\rho$ is
\begin{equation}\label{eq:heston_ito}
\begin{aligned}
    \diff X_t &= \bigl(\mu - \tfrac{1}{2}V_t\bigr) \diff t + \sqrt{V_t}\, \diff W^{(1)}_t,\\
    \diff V_t &= a\,(b - V_t)\diff t + \sigma \sqrt{V_t} \diff W^{(2)}_t,
\end{aligned}
\end{equation}\medbreak
where $t>0$, $X_t \coloneqq \log {(S_t)}$ is the log-asset price, $V_t$ is its stochastic variance, and the Brownian motions have correlation $\rho$.
In Stratonovich form, this becomes
\begin{equation}\label{eq:heston_strat}
\begin{aligned}
    \diff X_t &= \bigl(\mu_{\text{strat}} - \tfrac{1}{2}V_t\bigr) \diff t + \sqrt{V_t}\circ \diff W^{(1)}_t,\\
    \diff V_t &= a\,(b_{\text{strat}} - V_t) \diff t + \sigma \sqrt{V_t}\circ \diff W^{(2)}_t,
\end{aligned}
\end{equation}
where
\begin{align*}
\mu_{\text{strat}} &\coloneqq \mu - \tfrac{1}{4}\rho\sigma,\qquad b_{\text{strat}} \coloneqq b - \tfrac{\sigma^2}{4a}.
\end{align*}
The correction to $b$ is the same as for the CIR model. The correction to $\mu$ arises from the cross-term in the It\^{o}-to-Stratonovich conversion due to the correlated Brownian motions \cite{KP1992}.

\subsubsection*{Default parameters}
Unless otherwise specified, our results for the log-Heston model use the parameters:
\[X_{0}=\log(20),\ V_0=0.4,\ \mu=0.1,\ a=2,\ b=0.1,\ \sigma=0.5,\ \rho=0,\ T = 1.0.\]

\subsubsection*{Reference values}
All errors quoted in our results for the log-Heston model are with respect to the following theoretically derived reference value:

\begin{enumerate}
    \item Price at time $t=0$ of a European call option with expiration time $T$ \cite{heston1993, crisostomo2015heston}:
    \begin{equation}
    \begin{aligned}
        C(t=0,\, T) &= \E\bigl[\exp(-\mu T) \max\bigl(S_T - K, 0\bigr)\bigr]\\[2pt]
                    &= S_{0} \Pi_1 - \exp(-\mu T) K \Pi_2,
    \end{aligned}
    \end{equation}
    where
    \begin{align*}
        \Pi_1 &= \frac{1}{2} + \frac{1}{\pi} \int_0^\infty \operatorname{Re}\left(\frac{\exp(-\iu u \log K) \psi_{X_T}(u - \iu)}{\iu u \psi_{X_T}(-\iu)}\right) \diff u,\\
        \Pi_2 &= \frac{1}{2} + \frac{1}{\pi} \int_0^\infty \operatorname{Re}\left(\frac{\exp(-\iu u \log K) \psi_{X_T}(u)}{\iu u}\right) \diff u,
    \end{align*}
    and $T > 0$, $K$ is the strike price, $\mu$ is the risk-free rate of return, and $\psi$ is the characteristic function \footnote{While it is not a problem for our default parameters, the characteristic function as presented can become unstable for long maturity times and high volatilities due to the little Heston trap \cite{albrecher2007little}.}  defined for $u \in \mathbb{C}$ as:
    \begin{align*}
        \psi_{X_T}(u) &= \exp\bigl(A_T(u) b + B_T(u) V_0 + \iu u (X_0 + \mu T)\bigr),  \\[3pt]
        A_T(u) &= a \biggl[ T\cdot r_{-} - \frac{1}{\gamma} \log\biggl(\frac{1 - g\exp(-hT)}{1 - g}\biggr)\biggr],\\
        B_T(u) &= r_{-}\cdot\frac{1 - \exp(-hT)}{1 - g \exp(-hT)},\\[6pt]
        r_{\pm} &= \frac{\beta \pm h}{\sigma^2}, \qquad h = \sqrt{\beta^2 - 4 \alpha \gamma},\qquad g = \frac{r_{-}}{r_{+}},\\
        \alpha &= \frac{-u^2}{2} - \frac{\iu u}{2},\qquad \beta = a - \rho \sigma \iu u, \qquad\gamma = \frac{\sigma^2}{2}.
    \end{align*}
\end{enumerate}

\subsubsection*{Numerical considerations}

The CIR model is used as the stochastic volatility component of the (log-)Heston model. Therefore, we apply the same numerical considerations as for the CIR model.
That is, we compute $\sqrt{V_t^+} \coloneqq \sqrt{\max{(V_t, 0)}}$ to prevent issues resulting from negativity.

    \clearpage
\section{Rough Path Theory}\label{app:roughpaths}

\subsection{Motivation}\label{sec:rp:motivation}
We give a brief motivation of and introduction to rough path theory. A more comprehensive introduction is given in \cite{terrystflour}.

The goal of rough path theory is to study controlled differential equations (CDEs) of the form 
\begin{equation}\label{eq:CDE}
    \diff Y_t = f(Y_t) \diff X_t\m,\qquad Y_0 = \xi,
\end{equation}
where $X= (X^{(0)},X^{(1)},\ldots,X^{(d)})$ is the \emph{control} or \emph{signal} (living in $V \coloneqq \R^{d+1}$ for some $d\in \N$, where $X^{(0)}(t) = t$), and $Y$ is the \emph{response} (living in $W \coloneqq \R^m$ for some $m\in \N$). The solution, if it exists, is denoted $Y = I_f(X,\xi)$, and $I_f$ is called the \emph{It{\^o} map} associated with $f$.

Recall that if $p \ge 1$ and $X\colon [0,1] \to V$ is continuous, then the \emph{$p$-variation} of $X$ is

\begin{equation}\label{eq:pvariation}
    \|X\|_{p} = \sup_{\mathcal{D}} \bigg[ \sum |X_{t_{i+1}}-X_{t_i}|^p\bigg]^{1 / p},
\end{equation}
where the supremum is over all subdivisions of $[0,1]$ (not to be confused with the $L^p$ norm which we would denote by $\left\|\cdot\right\|_{L^p}$).
If $X$ is of bounded ($1$-)variation and $f$ is sufficiently smooth (say Lipschitz), then classical theory ensures existence and uniqueness of the solution to \eqref{eq:CDE}, as well as continuity of the It\^o map. More precisely, if $X^n$, $n\in \N$, and $X$ are bounded variation paths, and $X^n \to X$ in $1$-variation, then $I_f(X^n,\xi) \to I_f(X,\xi)$ in $1$-variation for any initial condition $\xi$.
However, in reality signals are often much more rough; a famous example is Brownian motion, whose paths almost-surely have finite $p$-variation if and only if $p > 2$, but the classical theory of controlled differential equation based on the Young integral fails as soon as $p \ge 2$. Equations controlled specifically by Brownian motion can still be made sense of in an inherently stochastic way using It\^o calculus, but the aim of rough path theory is to develop a flexible framework in which to study \eqref{eq:CDE} for a broad class of rough signals in a \emph{pathwise} (i.e.\ deterministic) manner.

To summarise, what we would like to do is to extend the map $I_f$, say for a fixed, smooth $f$, to a space of functions that includes rough signals such as continuous paths that have finite $p$-variation only for $p > 2$, in such a way that $I_f$ is continuous w.r.t.\ some complete metric. A na\"ive approach, however, is doomed to fail: there exist many examples of sequences of smooth paths $(x^1_n)$ and $(x^2_n)$ that converge to the same path $x$ uniformly and in $p$-variation say for some $p > 2$, but such that $I_f(x^1_n,\xi)$ and $I_f(x^2_n,\xi)$ have different limits.

The problem, it turns out, is not due to a wrong choice of metric, or an inherent impossibility of (deterministically) integrating against rough signals, but rather of conceptual nature: the mistake lies in thinking of a rough signal as ``just'' a continuous path $[0,1] \to V$; to characterise its behaviour as a driving signal in \eqref{eq:CDE}, more information is required. It turns out that, in many of the examples alluded to at the beginning of the paragraph, $(x^1_n)$ and $(x^2_n)$ do in fact both have well-defined but \emph{distinct} limits in an appropriate space of so-called rough paths that both ``live over'' the same classical path $x\colon [0,1] \to V$.

To motivate what additional information may be required to characterise such a rough path, consider for a moment the case of CDEs driven by paths of bounded variation. If $f$ is sufficiently regular, they can be solved through Picard iteration---i.e.\ letting $Y^0_t = Y_0$ and iteratively solving $\diff Y^{n+1}_t = f(Y^n_t) \diff X_t$---which reveals that $Y_t$ is actually a function of the collection of iterated integrals of the signal $X$ on $[0,t]$, that is its  \emph{signature} \[
    \sig_{0,t}(X) \coloneqq \left( 1,\sig^1_{0,t}(X), \sig^2_{0,t}(X),\ldots \right) \in T((V)) = \bigoplus_{k=0}^\infty V^{\otimes k},
\] where $V^{\otimes k}$ is the $k$-fold tensor product of $V$ with itself, and 
\begin{equation*}
    X^k_{0,t} \coloneqq \sig^k_{0,t}(X) \coloneqq \int_{0 < u_1 < \ldots< u_k < t} \diff X_{u_1} \otimes \ldots\otimes \diff X_{u_k} \in V^{\otimes  k}
\end{equation*}
can be thought of as compact notation for the collection of $k$-fold iterated integrals of $X = (X^{(0)}, X^{(1)}, \ldots ,X^{(d)})\colon [0,1] \to V = \R^{d+1}$, i.e.\ \[
    X^k_{0,t} = \big(X^{k,I}_{0,t}\big)_{I = (i_1 \ldots i_k) \in [d]^k}\quad \text{where} \quad X^{k,I}_{0,t} = \int_{0 < u_1 < \ldots< u_k < t} \diff X_{u_1}^{(i_1)} \ldots \diff X_{u_k}^{(i_k)}.
\]
Note that the first level of the signature are just the increments $X^1_{0,t} = X_t - X_0$, which determine the path up to a translation that doesn't affect its behaviour as a signal.

From this perspective, it seems natural to think of a signal not as a path in $V$, but as a path in $T((V))$. If the underlying path has bounded variation, then this distinction is insubstantial because all levels of the signature are determined by the first level (the increments) through classical integration. But now say that we have a continuous path $X\colon [0,1] \to V$ with finite $p$-variation only for $p \in (2,3)$, such as a realisation of Brownian motion. Then we can define $X^1_{s,t} = X_t - X_s$, but for higher levels we run into the old problem that iterated integrals of $X$ cannot be defined classically. However, it turns out that if we \emph{assign} a second level $X^2$ in such a way that 
\begin{equation}\label{eq:chenlevel2}
    X^2_{s,t} = X^2_{s,u} + X^2_{u,t} + X^1_{s,u} \otimes X^1_{u,t}\m,\qquad \forall s < u < t,
\end{equation}
an identity that holds automatically for the signature of a bounded variation path by standard properties of the integral, and such that $X^2$ has bounded $p$-variation in an appropriate sense, then there is a \emph{unique} way of extending the signature to all higher levels in such a way that the natural generalisation of \eqref{eq:chenlevel2} holds at all levels (and that the entire signature has bounded $p$-variation in an appropriate sense). Such an object is then called a ($p$-)\emph{rough path} over the original path $X\colon [0,1] \to V$, and, crucially, there can be many rough paths over the same classical path $X$. The central result of rough path theory is that there is a natural way to define integration against and CDEs driven by rough paths, in such a way that the It\^o map is continuous with respect to a suitably chosen metric.

\subsection{Rough paths and the universal limit theorem}

Let $\triangle = \left\{ (s,t)\colon 0 \le s \le t \le 1 \right\} $, and $X\colon \triangle \to T((V))$ be a continuous mapping. To avoid confusion, we will use lower case letters such as $x$ to denote classical paths $[0,1]\to V$ in this section. The generalisation of \eqref{eq:chenlevel2} to higher levels is obtained by writing (the components of) $X_{s,t}$ in terms of $X_{s,u}$ and $X_{u,t}$ for some $s < u < t$, in the case where $X$ is the signature of a bounded variation path. This can be done using standard properties of integrals, and the result can be written compactly as
\begin{equation}\label{eq:chen}
    X_{s,t} = X_{s,u} \otimes X_{u,t}\m,\qquad \forall\, 0 \le s \le u \le t \le 1,
\end{equation}
also known as \emph{Chen's identity}.
The zeroth level of \eqref{eq:chen} is $1 = 1$, the first level is $X^1_{s,t} = X^1_{s,u} + X^1_{u,t}$, and the second level is \eqref{eq:chenlevel2}.

\begin{definition}\label{def:proughpath}
    Let $p \ge 1$. A \emph{$p$-rough path} is a continuous mapping $X\colon \triangle \to T((V))$ that satisfies Chen's identity \eqref{eq:chen} and has finite $p$-variation in the sense that 
    \begin{equation}\label{eq:pvarroughpath}
        \sup_{\mathcal{D}} \sum \|X^{k}_{s_i,s_{i+1}}\|^{p / k} < \infty,
    \end{equation} 
    for all $k\in \N$, where the supremum is over finite subdivisions of $[0,1]$.
\end{definition}
Note that \eqref{eq:chen} implies that the first level of a $p$-rough path is always the increments of a continuous path $[0,1] \to V$, which by \eqref{eq:pvarroughpath} has finite $p$-variation. Note further that if $x\colon [0,1]\to V$ is a continuous path with finite $p$-variation for some $p\in [1,2)$, then its signature $\sig(X) = (1,X^1,X^2,\ldots)$ is classically defined in terms of iterated (Young) integrals, and is a $p$-rough path according to Definition~\ref{def:proughpath}. Denote by $T^{(n)}(V) = \bigoplus_{k=0}^n V^{\otimes k}$ the truncated tensor algebra over $V$ of degree $n$, equipped with the product $\boldsymbol a \boldsymbol b = \boldsymbol c$ where $c_k = a_0 \otimes b_k +\ldots + a_k \otimes b_0$.

\begin{theorem}[Extension Theorem]\label{thm:extension}
    Let $p \ge 1 $, and let $X\colon \triangle \to T^{(\left\lfloor p \right\rfloor )}(V)$ be a continuous mapping that satisfies Chen's identity and has finite $p$-variation in the sense that \eqref{eq:pvarroughpath} holds for all $k \le \left\lfloor p \right\rfloor $. Then $X$ has a unique extension to a $p$-rough path $X\colon \triangle \to T((V))$.
\end{theorem}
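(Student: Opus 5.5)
We prove this by Lyons' classical argument: build the higher levels inductively, one level at a time, as limits of Riemann-type products over partitions. Suppose we already have a multiplicative functional $X$ of degree $n \ge \lfloor p \rfloor$, i.e.\ $X\colon \triangle \to T^{(n)}(V)$ satisfying Chen's identity \eqref{eq:chen} in $T^{(n)}(V)$. Suppose also that it has a \emph{control} $\omega$ (continuous, superadditive, $\omega(t,t)=0$) with $\|X^k_{s,t}\| \le \omega(s,t)^{k/p}/\beta(k/p)!$ for all $k \le n$. Such a control exists from \eqref{eq:pvarroughpath} at levels $k \le \lfloor p\rfloor$, e.g.\ by summing the $p/k$-variations over the intervals $[s,t]$ and rescaling $\omega$ by a constant. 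The goal is to extend $X$ to degree $n+1$, keep the same bound at level $n+1$, and show the extension is unique. Iterating over $n$ then gives the full $p$-rough path.

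\textbf{Existence.} For a partition $\mathcal D=\{s=t_0<\dots<t_r=t\}$, let $\hat X_{t_i,t_{i+1}}$ be $X_{t_i,t_{i+1}}$ with level $n+1$ set to zero, viewed in $T^{(n+1)}(V)$. Define $X^{\mathcal D}_{s,t} = \hat X_{t_0,t_1}\otimes\cdots\otimes \hat X_{t_{r-1},t_r}$. Levels $\le n$ of this product coincide with $X_{s,t}$ by Chen's identity.

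The key step is to remove points one at a time. Since $\omega$ is superadditive, some point $t_j$ satisfies $\omega(t_{j-1},t_{j+1}) \le \frac{2}{r-1}\omega(s,t)$. Deleting $t_j$ changes only level $n+1$, and only by $\sum_{k=1}^{n} X^k_{t_{j-1},t_j}\otimes X^{n+1-k}_{t_j,t_{j+1}}$. By the inductive bounds and the neo-classical inequality $\sum_k \frac{a^{k/p}b^{(m-k)/p}}{(k/p)!((m-k)/p)!} \le p\,\frac{(a+b)^{m/p}}{(m/p)!}$, this is at most $C\,(2\omega(s,t)/(r-1))^{(n+1)/p}$. Because $(n+1)/p>1$, the sum over $r$ converges, which gives a bound on $\|X^{\mathcal D,n+1}_{s,t}\|$ uniform in $\mathcal D$. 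Choosing $\beta$ large enough in terms of $p$ closes the induction with the constant $1/\beta((n+1)/p)!$.

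The Cauchy property as $\operatorname{mesh}(\mathcal D)\to0$ follows from the same estimate. Compare two partitions through their common refinement: inside each cell of the coarser partition, the refinement error is $\lesssim \omega(t_i,t_{i+1})^{(n+1)/p}$. Summing over cells gives at most $\max_i \omega(t_i,t_{i+1})^{(n+1)/p-1}\,\omega(s,t)$, which tends to $0$ by continuity of $\omega$. The limit $X^{n+1}_{s,t}$ inherits the bound, continuity, and Chen's identity, since $X^{\mathcal D}$ is multiplicative when $u$ is included in $\mathcal D$.

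\textbf{Uniqueness.} Suppose $X$ and $\tilde X$ are two extensions, agreeing up to level $n$. Then $\psi = X^{n+1}-\tilde X^{n+1}$ is additive, $\psi_{s,t}=\psi_{s,u}+\psi_{u,t}$, because the cross terms in Chen's identity involve only levels $\le n$. It is also bounded by $C\omega(s,t)^{(n+1)/p}$. Summing over a fine partition gives $|\psi_{s,t}| \le C\max_i\omega(t_i,t_{i+1})^{(n+1)/p-1}\omega(s,t)\to0$, so $\psi\equiv 0$. This argument needs finite $p$-variation of the competitor, which Definition~\ref{def:proughpath} requires.

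\textbf{Main obstacle.} The hard part is obtaining a bound at level $n+1$ that has exactly the same factorial form as the inductive hypothesis, so that the induction closes for every level rather than deteriorating. This is where the neo-classical inequality and the careful choice of $\beta$ (e.g.\ $\beta \ge 2p^2(1+\sum_{r\ge3}(2/(r-2))^{(\lfloor p\rfloor+1)/p})$) are essential. Everything else is the standard sewing argument.
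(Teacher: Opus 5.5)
Your proposal is correct and matches the proof the paper relies on. The paper gives no argument of its own and cites Theorem~3.7 of \cite{terrystflour}, which is proved by this same level-by-level construction: point removal, the neo-classical inequality, and the additive-functional uniqueness argument. The one step you pass over is that the function built by summing the $p/k$-variations is continuous, not merely superadditive; this is a standard fact, and your mesh-to-zero steps in the existence part need it.
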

See Theorem 3.7 in \cite{terrystflour}.
In particular, a $p$-rough path is uniquely determined by its truncation at level $\left\lfloor p \right\rfloor $, and we sometimes identify it with this truncation. Denote by $\Omega_p(V)$ the space of $p$-rough paths, which is a complete metric space when equipped with the \emph{$p$-variation metric} \cite[Sect.\ 3.3.1]{lyonsqian} \[
    d_p(X,Y) = \max_{i=1, \ldots ,\left\lfloor p \right\rfloor } \sup_{\mathcal{D}} \bigg( \sum \| X^{k}_{s_i,s_{i+1}} - Y^k_{s_i,s_{i+1}}\|^{\frac{p}{k}} \bigg)^{\frac{1}{p}}.
\] Note that $\Omega_1(V)$ can be identified with the space of (classical) continuous bounded variation paths $[0,1]\to V$ (modulo constant translations) with the $1$-variation metric.

\begin{definition}\label{def:geometricroughpath}
    The space $G \Omega_p(V)$ of \emph{geometric rough paths} is the closure of $\Omega_1(V)$ in $\Omega_p(V)$ w.r.t.\ the $p$-variation metric. It is in particular complete.
\end{definition}

The inclusion $G \Omega_p(V) \subset \Omega_p(V)$ is strict except if $p < 2$. A simple example of a $2$-rough path that is not geometric is $X_{s,t} = (1,0,(t-s) \boldsymbol a)$ for any $\boldsymbol a \in V^{\otimes 2} \setminus \left\{ 0 \right\} $. It is easy to check that Chen's identity holds and $X$ has finite $p$-variation for any $p \ge 2$, so $X$ is a $2$-rough path. But for any bounded variation path, and therefore any geometric $p$-rough path by approximation in $d_p$, it must hold that the symmetric part of $X^{2}_{s,t}$ is $\frac{1}{2} (X^1_{s,t})^{\otimes  2}$, but this is not true except if $\boldsymbol a = 0$.

\begin{remark}
    An alternative way to introduce (geometric) rough paths directly, without any reference to tensor algebras and Chen's relation, would have been to say that the space of (geometric) $p$-rough paths is the completion of the space of continuous paths $[0,1] \to V$ of bounded variation w.r.t.\ a metric in which two paths are close if they and their first $\left\lfloor p \right\rfloor $ iterated integrals are close in $p$-variation.
\end{remark}

The following theorem is the main result and justification of rough path theory. A function $f\colon A \to B$ between two finite-dimensional Banach spaces $A,B$ is said to be $\textrm{Lip}(\gamma)$ for some $\gamma > 0$ if it is bounded, $\left\lfloor \gamma \right\rfloor $ times continuously differentiable and such that the $\left\lfloor \gamma \right\rfloor $'th derivative is H\"{o}lder continuous with exponent $\gamma - \left\lfloor \gamma \right\rfloor $. Recall that $V = \R^{d+1}$ and $W = \R^m$.

\begin{theorem}[Universal Limit Theorem]
\label{thm:universal limit theorem}
    Let $p \ge 1$ and $\gamma > p$, and $f\colon W \to \mathcal{L}(V,W)$ be a $\textrm{Lip}(\gamma)$ function. Then for all $X \in G \Omega_p(V)$ and $\xi \in W$, the equation \[
        \diff Y_t = f(Y_t) \diff X_t\m,\quad Y_0 = \xi,
    \] has a unique solution in $G \Omega_p(W)$ (in an appropriate sense), and the map \[
        I_f\colon G \Omega_p(V) \times W \to G \Omega_p(W)
    \] that sends $(X,\xi)$ to $Y$ is continuous and equal to the unique continuous extension of the classical It\^o map $I_f \colon \Omega_1(V) \times W \to \Omega_1(W)$ (which is continuous in $p$-variation).
\end{theorem}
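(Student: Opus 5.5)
We would not reprove this from scratch. The statement is Lyons' universal limit theorem, and the intended justification is a citation to \cite[Thm.~5.3]{terrystflour} (see also \cite{lyonsqian}). If a self-contained argument were wanted, we would follow Davie's approach via Euler estimates, because it uses the extension theorem (Theorem~\ref{thm:extension}) directly.

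Throughout, fix a control $\omega(s,t)$ dominating the $p$-variation of $X$. This means $\|X^k_{s,t}\| \le \omega(s,t)^{k/p}$ for $k \le \lfloor p\rfloor$, and by Theorem~\ref{thm:extension} the same bound holds for all levels, up to factorial constants.

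\textbf{Step 1: a priori estimates for smooth drivers.} Take $X\in\Omega_1(V)$, so the equation is a classical ODE. Let $\mathcal{E}_{s,t}(y)$ be the step-$\lfloor\gamma\rfloor$ Euler approximation: the Taylor expansion $\sum_{k\le \lfloor\gamma\rfloor} f^{\circ k}(y)\, X^k_{s,t}$, built from the iterated vector-field derivatives $f^{\circ k}$ and the signature levels $X^k_{s,t}$. Using the Lip$(\gamma)$ property of $f$, we show
\[
|Y_t - \mathcal{E}_{s,t}(Y_s)| \le C\,\omega(s,t)^{\gamma/p}.
\]
The key point is that $C$ depends only on $\|f\|_{\mathrm{Lip}(\gamma)}$, $p$, $\gamma$ and an upper bound on $\omega(0,1)$, and not on the $1$-variation of $X$. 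We prove this with a sewing/Davie-type argument. Let $\Gamma_{s,u,t}$ be the defect obtained by composing two Euler steps over $[s,u]$ and $[u,t]$ versus one step over $[s,t]$. Chen's identity \eqref{eq:chen} cancels every term in $\Gamma_{s,u,t}$ of degree at most $\lfloor\gamma\rfloor$, which leaves $|\Gamma_{s,u,t}|\lesssim \omega(s,t)^{\gamma/p}$. Since $\gamma/p>1$, we can then remove partition points successively, choosing at each stage the point whose removal costs least. This gives a bound summable over refinements. The same argument applied to the lifted pair $(X,Y)$ controls the higher levels of $Y$, so $Y\in\Omega_1(W)$ has $p$-variation bounded uniformly.

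\textbf{Step 2: stability, which is the main obstacle.} Next we need a Lipschitz-type estimate on a sufficiently small interval:
\[
d_p\big(I_f(X,\xi),\,I_f(\tilde X,\tilde\xi)\big)\le C\big(d_p(X,\tilde X)+|\xi-\tilde\xi|\big).
\]
Here we expect the real difficulty. One has to compare two Euler schemes with different drivers. The comparison needs $f$ to be Lip$(\gamma)$ rather than merely Lip$(\gamma-1)$, in order to control the difference of the defects $\Gamma-\tilde\Gamma$; the strict inequality $\gamma>p$ is also used at this step. Our first attempt would be to run the Davie argument on the difference process $Y-\tilde Y$ together with the linearised equation. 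If that fails, we would fall back on Lyons' original route. That route treats $Z=(X,Y)$ as a fixed point of the rough-integral map $Z\mapsto \xi+\int f(Y)\,\diff X$ on $G\Omega_p(V\oplus W)$ and shows it is a contraction on short intervals, i.e.\ where $\omega(s,t)$ is small.

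\textbf{Step 3: extension and conclusion.} The local estimates are concatenated across finitely many intervals, since $\omega$ is superadditive; this makes the bounds global on $[0,1]$. Now let $X\in G\Omega_p(V)$ and choose $X^n\in\Omega_1(V)$ with $d_p(X^n,X)\to0$, which is possible by Definition~\ref{def:geometricroughpath}. Step~2 shows that $I_f(X^n,\xi)$ is Cauchy in $G\Omega_p(W)$. Completeness gives a limit, which is independent of the approximating sequence and is continuous in $(X,\xi)$. Passing to the limit in the Euler estimate of Step~1 shows the limit solves the equation in Davie's sense. Any solution in that sense satisfies the same estimate, so Step~2 applied to two such solutions gives uniqueness. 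Finally, the extended map coincides with the classical It\^o map on $\Omega_1(V)$, and continuity together with density of $\Omega_1(V)$ makes it the unique continuous extension.
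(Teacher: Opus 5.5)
Your proposal matches the paper: the paper offers no argument of its own for this theorem and simply refers to Section~5.3 of \cite{terrystflour}, which is the same citation you give. Your optional sketch follows Davie's Euler-scheme route (as systematised by Friz and Victoir) rather than Lyons' Picard-iteration argument in the cited source, and it is sound as an outline, with one small caveat: the cancellation in $\Gamma_{s,u,t}$ uses the group-like (shuffle) property of the signature as well as Chen's identity.
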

A more precise statement and a proof are in Section 5.3 of \cite{terrystflour}.

\subsection{Brownian motion as a rough path}
Since the main application of rough path theory is to differential equations driven by Brownian motion, it will be useful to consider for a moment the special case of a geometric $p$-rough path when $p \in [2,3)$. As mentioned before, signatures of bounded variation paths and therefore geometric $p$-rough paths have the property that the symmetric part $S_{s,t} \in V^{\otimes  2}$ defined by \[
    S^{(ij)}_{s,t} = \frac{1}{2} \Big( X^{2,(ij)}_{s,t} + X^{2,(ji)}_{s,t} \Big),\qquad (s,t) \in \triangle,
\] satisfies $S_{s,t} = \frac{1}{2} (X^1_{s,t})^{\otimes  2}$. If we denote the anti-symmetric part by $A_{s,t} \in V^{\otimes  2}$, i.e. \[
    A^{ij}_{s,t} = \frac{1}{2} \Big( X^{2,(ij)}_{s,t} - X^{2,(ji)}_{s,t} \Big),\qquad (s,t) \in \triangle,
\] then $X^2 = S + A$, which means that $X$ as a whole is fully determined by its increments $X^1$ and the anti-symmetric part $A$ of its second level, \[
X = \bigg(1, X^1, \frac{1}{2} (X^1)^{\otimes 2} + A\bigg).
\] The tensor $A$ can be interpreted as the \emph{area} of $X$, in the following sense: if $X$ is the signature of a path $x \in \Omega_1(V)$, then \[
A^{(ij)}_{s,t} = \frac{1}{2} \,\,\smashoperator{\iint_{s \le u_1 \le u_2 \le t}}\,\, \diff x^i_{u_1} \diff x^j_{u_2} - \diff x^j_{u_1} \diff x^i_{u_2},
\] which is the area enclosed between the bounded variation path $(x^i,x^j)$ and its secant on the time interval $[s,t]$. The area is ``signed'', e.g.\ the area enclosed by $t \mapsto (t, \sin(t))$ on $[0,\pi]$, $[\pi, 2\pi]$, and $[0,2\pi]$ is, respectively, $2$, $-2$, and $0$. For genuine $p$-rough paths with $p \ge 2$, it is still common to refer to $A$, the anti-symmetric part of $X^2$, as its \emph{area}.

Brownian motion is a random continuous path which almost-surely has finite $p$-variation for $p > 2$.
By the considerations in the previous section, a (geometric) rough path over a fixed realisation of the Brownian increments is determined by the choice of its area tensor $A$, and this choice is not unique! 
If we choose $A$ to be (a continuous version of) the \emph{L\'evy area} of Brownian motion, that is \[
    A^{ij}_{s,t} = \frac 12 \smashoperator{\iint_{s \le u_1 \le u_2 \le t}} \big( \diff B^i_{u_1} \diff B^j_{u_2} - \diff B^i_{u_2} \diff B^j_{u_1}\big),\qquad i,j\in [d], \, (s,t)\in\triangle,
\] where the integral is in the It\'o or Stratonovich sense (which doesn't make a difference in this case), then we obtain what is called the \emph{canonical Brownian rough path}. It can be obtained as the almost-sure limit of its piecewise linear approximation on an increasingly fine set of support points, so it is indeed geometric. It can further be thought of as the signature of Brownian motion where iterated integrals are calculated in the Stratonovich sense. Indeed, if $X\colon \triangle \to T((V))$ is a canonical Brownian rough path over a Brownian motion $B$, then for every $k\in \N$ and $J =(j_1\ldots j_k) \in [d]^k$, and every $0 \le s \le t \le 1$, almost-surely 
\begin{equation}\label{eq:sigstratonovich}
X^{k,J}_{s,t} = \,\,\smashoperator{\int_{s < u_1 < \ldots < u_k < t}}\,\, \circ \diff B^{(j_1)}_{u_1} \ldots\circ \diff B^{(j_k)}_{u_k}\m,
\end{equation}
where $ \circ \diff B$ denotes integration in the Stratonovich sense. A consequence of this is that the solution of a differential equation driven by Brownian motion when thought of as a rough path has the same law as the solution of the same equation when interpreted in the classical Stratonovich sense; hence why we use the Stratonovich interpretation of \eqref{eq:sde} (recall, however, that Stratonovich and It\^o SDEs can easily be transformed into one another).
The advantage of considering Brownian motion as a rough path is that, once a continuous version of the L\'evy area has been fixed, we can solve all differential equations driven by Brownian motion in a pathwise sense on the same set of probability one, and without requiring predictability or other such inherently probabilistic conditions common in stochastic integration.

    \clearpage
\section{Theoretical Results}\label{app:theoretical results}

\subsection{(Re-)statement of results}
With the additional setup, we can state slightly stronger and more precise versions of Theorems~\ref{thm:maindyadic} and~\ref{thm:mainOA}. We assume that the reader is familiar with some standard notions in probability theory, such as weak convergence of probability measures (i.e.\ convergence in distribution), which we denote $\mu_n \implies \mu$, and tightness.
\begin{manualtheoremnumber}{\ref*{thm:maindyadic}*}
\begin{theorem}\label{thm:maindyadic-stronger}
    Suppose $(\lambda_i^n, \omega_i^n)$ is an \short cubature with degree $D_n \to \infty$, and dyadic depth $m_n \in \N$ such that \[
    m_n \ge \Big(\frac 34 + \varepsilon\Big) \log_2 N_n - C,
    \] for some $\varepsilon, C > 0$.
    Then there exists a $p \in (2,3)$ such that the cubature converges in distribution to Brownian motion as a $p$-rough path. In particular, if $y^n$ denotes the solution to an SDE with smooth vector fields driven by the cubature, and $y$ denotes the solution to the same SDE driven by Brownian motion, then $y^n \implies y$ in distribution as rough (and in particular as ordinary) paths.
\end{theorem}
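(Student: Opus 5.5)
The proof follows the classical two-step scheme for weak convergence in rough path space. First we show tightness of the laws of the cubature rough paths $\mathbf{X}^n$ (the signatures of the piecewise linear cubature paths) in $G\Omega_p(V)$ for some $p\in(2,3)$. Then we identify every subsequential limit as the canonical Brownian rough path. Once convergence in $G\Omega_p(V)$ is established, the statement about SDE solutions follows from the continuous mapping theorem applied to the Itô map of the Universal Limit Theorem (Theorem~\ref{thm:universal limit theorem}). Smooth vector fields can be localised to $\mathrm{Lip}(\gamma)$ ones by a standard stopping argument, and convergence as ordinary paths follows by projecting onto the first level.

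\textbf{Identification of the limit.} Let $\mathbf{X}$ be a subsequential limit. For every fixed multi-index $J$, the cubature matches $\E[\sig^J(B)]$ on $[0,1]$ as soon as $D_n\ge\|J\|$. To pass this identity to the limit we need uniform integrability of $\sig^J(\mathbf{X}^n)$, which will come out of the moment bounds proved for tightness. This gives $\E[\sig(\mathbf{X})]=\E[\sig(B)]$. The expected Brownian signature has infinite radius of convergence in the sense of Chevyrev--Lyons~\cite{chevyrev2016}, so by Lemma~\ref{lem:expsig characteristic} the law of $\sig(\mathbf{X})$ equals that of the Brownian signature. Uniqueness of signatures for geometric rough paths~\cite{boedihardjo2016}, up to tree-like equivalence, then determines the law of $\mathbf{X}$ as a rough path. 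The time coordinate $x^0(t)=t$ removes the tree-like ambiguity. This is where I expect to invoke Lemma~\ref{lem:roughpaths} directly.

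\textbf{Tightness.} This is the main obstacle, and it is where the dyadic depth enters. I would use a Kolmogorov-type criterion for rough paths in the spirit of~\cite{donskercubature,donskerRW}. The goal is to bound $\E\|\mathbf{X}^{n,k}_{s,t}\|^{q/k}\lesssim |t-s|^{q/2}$ for all $k\le 2$ and some $q$ large enough that $q/2>1$ and the resulting Hölder/variation exponent exceeds $1/p>1/3$. Such a bound gives tightness in $G\Omega_{p'}$ for $p'>p$ via compact embedding.

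The bound splits into two scales.

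\emph{Coarse scale: $s,t$ dyadic points of level $\le m_n$.} Chen's identity writes $\mathbf{X}_{s,t}$ as a product of signatures over dyadic intervals. On each such interval, matching to degree $7$ gives exact Brownian moments of order up to $6$ for levels $1$ and $2$, after Brownian rescaling. This is why $7$ appears in the Remark. We do not have independence across intervals after recombination, so I would use the fact that the matched expected signature on \emph{every} dyadic interval controls moments on all unions of dyadic intervals. Concretely, $\E[\sig^J]$ on a union of dyadic intervals is determined by Chen's identity and the matched moments on the dyadic parent containing the union, once the dyadic tree is traversed. This yields $\E|X^1_{s,t}|^6\lesssim|t-s|^3$ and $\E|X^2_{s,t}|^3\lesssim|t-s|^3$, which suffices with $q=6$.

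\emph{Fine scale: inside a dyadic interval of length $2^{-m_n}$.} Here we only use the deterministic bound that each path is piecewise linear with $N_n$ steps of size $N_n^{-1/2}$ on $[0,1]$. This bound holds because recombination and reweighting only select subsets of the Step~1 paths, and Step~4 only rescales and concatenates them. It gives a Lipschitz-type estimate: $|x_t-x_s|\le N_n^{1/2}|t-s|$ on each linear piece, and via counting steps, $|x_t-x_s|\le N_n^{1/2}|t-s|$ in general. Within an interval of length $h=2^{-m_n}$ the increment is at most $N_n^{1/2}h$, and the area is at most $N_nh^2$. These must be dominated by $h^{1/p}$ and $h^{2/p}$ uniformly. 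Writing $h=N_n^{-a}$, the requirement is $N_n^{1/2-a(1-1/p)}\to0$. The hypothesis gives $a\ge 3/4+\varepsilon$. The borderline is $a(1-1/p)=1/2$, and with $a>3/4$ this means $1-1/p>2/3$, that is, $p>3$. That is too weak as it stands, so the naive bound must be combined with the coarse moments by interpolation. Instead of a deterministic sup bound on fine intervals, I would use the $6$th-moment Kolmogorov sum over $\sim N_n$ fine pieces, balancing $N_n\cdot(N_n^{-1/2})^{q}$-type contributions against $h$. I expect the exponent $3/4$ to arise exactly from optimising this balance, with $q=6$ from degree $7$: the moment scale $h^{3}$ is needed against a step scale $N_n^{-3/2}\cdot$(count). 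I would carry out this bookkeeping first and adjust $p$ close to $3$ (within $(2,3)$) to absorb the $\varepsilon$. Gluing the two scales via Chen's identity and the triangle inequality in $d_p$ completes tightness.
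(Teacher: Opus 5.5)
Your overall architecture is the paper's: tightness in $G\Omega_p$, identification of subsequential limits via the expected signature (Lemmas~\ref{lem:roughpaths} and~\ref{lem:expsig characteristic}), then the universal limit theorem. However, the tightness argument has a genuine gap at its crux, caused by an inverted inequality.

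Write $h=2^{-m_n}\le 2^{C}N_n^{-a}$ with $a=\tfrac34+\varepsilon$. The fine-scale requirement $N_n^{1/2}h^{1-1/p}=O(1)$ reads $a(1-1/p)\ge\tfrac12$, i.e.\ $1-1/p\ge \tfrac{2}{3+4\varepsilon}$. Since $\tfrac{2}{3+4\varepsilon}<\tfrac23$, this holds for every $p\in(2,3)$ with $p>\tfrac{3+4\varepsilon}{1+4\varepsilon}$, which is a nonempty range. Your conclusion ``$1-1/p>2/3$, i.e.\ $p>3$'' is the condition for $a=\tfrac34$ exactly, not for $a>\tfrac34$.

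So the deterministic Lipschitz bound below scale $2^{-m_n}$, which you discard as too weak, is exactly what works. The exponent $\tfrac34$ is precisely its borderline as $p\uparrow 3$. This is the paper's Lemma~\ref{lem:tightness dyadic}: it compares the slope $\sqrt{N_n}=2^{k_n/2}$ with $2^{m_n(1-\alpha)}$ for a H\"older exponent $\alpha\downarrow\tfrac13$, giving the condition $k_n\le 2(1-\alpha)m_n+C$.

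The replacement you sketch instead, a moment/Kolmogorov sum over the $\sim N_n$ fine pieces, cannot work. Below dyadic depth $m_n$ the recombined and reweighted measure carries no moment information at all, so only deterministic bounds are available there. As written, tightness is therefore not established.

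The coarse scale also needs repair, on three points.

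\emph{The moment order.} $q=6$ is borderline rather than sufficient. A union bound over the $2^k$ dyadic intervals of length $2^{-k}$, with $\mu_n\big(|X^1_{v,w}|\ge C2^{-k\alpha}\big)\lesssim C^{-q}2^{-kq(1/2-\alpha)}$, is summable iff $q(\tfrac12-\alpha)>1$. For $q=6$ this forces $\alpha<\tfrac13$, i.e.\ $p>3$, and Kolmogorov gives the same threshold. You need moment order $D>1/(\tfrac12-\alpha)>6$, which is where the $7$ in the remark comes from. These moments are available, since the dyadic cubature matches degree $D_n\to\infty$ on every dyadic interval. For $\sig^{(10)}$, second moments (degree $6$) already suffice. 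The third moments you invoke are not covered by degree $7$, since $(\sig^{(10)})^3$ already has degree $9$.

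\emph{Non-dyadic unions.} Matched moments on dyadic intervals do \emph{not} determine $\E[\sig^J]$ over non-dyadic unions. For example, $\E[(X_{3/4}-X_{1/4})^2]$ contains $\mathrm{Cov}(X_{1/2}-X_{1/4},X_{3/4}-X_{1/2})$. Matching on $[0,1]$ constrains this only jointly with the other cross-half covariances, and recombination preserves no independence between halves. This step is also unnecessary: bound single dyadic intervals only, and pass to general $s<t$ deterministically via Lemma~\ref{lem: holder dyadic}, using Chen's identity at level two.

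\emph{Uniform integrability.} Uniform integrability of high-degree $\sig^J$ does not follow from fixed-order level-one/two bounds. The paper gets it from the shuffle identity in Lemma~\ref{lem: uniform integrability free}. There, $(\sig^J)^2$ is a sum of signature terms of degree $2\|J\|$, so $\sup_n\mu_n(|\sig^J|^2)<\infty$ once $D_n\ge 2\|J\|$.
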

\end{manualtheoremnumber}

\begin{manualtheoremnumber}{\ref*{thm:mainOA}*}
\begin{theorem}\label{thm:mainOA-stronger}
    Let $(\lambda_i^n,\omega_i^n)$ be the approximate cubature resulting from the first step of the \short algorithm, prior to recombination, sharpening, and any dyadic constructions, with $N_n \to \infty$ and degree $D_n \to \infty$. Then the assertion of Theorem~\ref{thm:maindyadic} holds for \emph{all} $p\in (2,3)$.
\end{theorem}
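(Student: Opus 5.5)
The plan is to show two things: the laws of the Step-1 cubatures (the uniformly weighted rows of a strength-$D_n$ orthogonal array with $N_n d$ columns, turned into piecewise linear paths with increments $\pm N_n^{-1/2}$) are tight in $G\Omega_p(V)$ for every $p\in(2,3)$, and every subsequential limit is the canonical Brownian rough path. The universal limit theorem (Theorem~\ref{thm:universal limit theorem}) and the continuous mapping theorem then give $y^n \implies y$.

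\textbf{Uniqueness of the limit.} The key observation is that any signature term of degree at most $D_n$ on any interval $[s,t]$ is a polynomial of degree at most $D_n$ in the $N_n d$ Rademacher increments. Each monomial therefore involves at most $D_n$ distinct columns. By the strength-$D_n$ property, the expectation of such a polynomial under the orthogonal array equals its expectation under the fully independent simple random walk. Hence, for each fixed multi-index $J$, once $D_n \ge \|J\|$ the expected signature of the cubature coincides exactly with that of the scaled simple random walk with $N_n$ steps. By Lemma~\ref{lem:product-cubature}, this converges to the expected Brownian signature as $N_n \to \infty$. The same argument applies to mixed moments of signature terms on finitely many intervals, since these are also low-degree polynomials in the increments. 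I would then invoke Lemma~\ref{lem:roughpaths}: given tightness and uniform moment bounds, pointwise convergence of expected signatures identifies the limit law, using that the expected Brownian signature has infinite radius of convergence and so characterises the law (Lemma~\ref{lem:expsig characteristic}).

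\textbf{Tightness.} This is the main obstacle. I would aim for a Kolmogorov-type criterion for rough paths: for some $q$ large and every dyadic interval $[s,t]$ on the grid of mesh $1/N_n$,
\[
\E\big[|X^{1}_{s,t}|^{q}\big] + \E\big[|X^{2}_{s,t}|^{q/2}\big] \le C_q |t-s|^{q/2},
\]
with the piecewise linear interpolation handling intervals below the grid scale trivially. The important point is that $|X^1_{s,t}|^q$ and $|X^2_{s,t}|^{q/2}$, with $q$ even, are polynomials of degree $q$ in the increments. So once $D_n \ge q$, their moments again equal those of the independent random walk, where the bound is classical (Burkholder/Rosenthal, as in Breuillard--Friz--Huesmann~\cite{donskerRW}). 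Because $D_n \to \infty$, we may take $q$ arbitrarily large, and the Garsia--Rodemich--Rumsey/Kolmogorov criterion then yields uniform $p$-variation (indeed Hölder) rough path bounds for any $p>2$. This gives tightness in $G\Omega_{p'}$ for $p<p'<3$, and by interpolation in $G\Omega_p$ for every $p\in(2,3)$, consistent with the stronger claim ``for all $p$''.

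\textbf{Main difficulty.} The delicate step is making the moment-matching argument rigorous for the second-level terms on intervals that do not align with the grid, and verifying that the hypotheses of Lemma~\ref{lem:roughpaths} (uniform moment bounds of all levels) follow from finite-degree matching. Since $D_n\to\infty$, every fixed moment is eventually matched exactly, so I expect this to work.
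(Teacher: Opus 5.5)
Your proposal is correct and follows essentially the paper's route. Tightness comes from moment bounds on dyadic intervals which, by the strength-$D_n$ property, coincide with those of the independent Rademacher walk, with intervals below the grid scale handled deterministically. The limit is then identified via $\mu_n(\sig)\to\mu(\sig)$, Lemma~\ref{lem:roughpaths}~(ii), Lemma~\ref{lem:expsig characteristic} and the universal limit theorem.

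The differences are cosmetic. You invoke a rough-path Kolmogorov criterion with Rosenthal/BDG bounds, where the paper does Markov plus a union bound by hand for $d=1$; the Kolmogorov route also handles non-dyadic second-level increments through Chen's relation. You also make the convergence of expected signatures explicit via Lemma~\ref{lem:product-cubature}, whereas the paper leaves it implicit and gets uniform integrability from squared signature terms, as in Lemma~\ref{lem: uniform integrability free}.
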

\end{manualtheoremnumber}

\begin{remark}
    \begin{enumerate}
        \item[(i)] The fact that the convergence of the cubature to Brownian motion holds on the level of \emph{rough} paths is a much stronger assertion than just on the level of ordinary paths. In particular, the main assertion that solutions driven by the cubature paths converge to the real solutions only holds thanks to the universal limit theorem for rough paths (Theorem~\ref{thm:universal limit theorem}), and would not follow from convergence on the level of ordinary paths.\medbreak
        
        \item[(ii)] As mentioned in the main paper, for the assertion of Theorem~\ref{thm:maindyadic-stronger} to hold, it is in fact sufficient if the cubature matches the expected Brownian signature up to degree $D_n \to \infty$ just on $[0,1]$, and only to degree $D=7$ on all of the finer dyadic intervals.
    \end{enumerate}
\end{remark}

\subsection{Proofs}
In the remainder of this appendix, we prove Theorems~\ref{thm:maindyadic-stronger} and~\ref{thm:mainOA-stronger}. 
We start by establishing conditions under which weak convergence and pointwise convergence of expected signatures imply each other for rough paths.

Recall that $V = \R^{d+1}$, and $G\Omega_p(V)$ denotes the space of geometric $p$-rough paths $x=(x^0,x^1,\ldots,x^d)$ in $V$ defined on $[0,T]$ for some fixed and arbitrary $T > 0$ with $x^0(t) = t$. This is a complete and separable metric space \cite[Proposition 39]{chevyrev2022signature}. Recall that the signature is a map $\sig \colon G\Omega_p \to T((V))$.
Let \[
\mathcal{M} \coloneqq \big\{\mu \in \mathcal{M}_1(G\Omega_p)\colon \mu\big( \|\sig^m\|_{V^{\otimes m}}\big) < \infty, \forall m\in \N \big\},
\]
where we used the shorthand $\mu(f)$ for $\int f \diff \mu$ if $f$ is a function and $\mu$ a probability measure such that the integral is well-defined, and $\mathcal{M}_1(X)$ for a measurable space $X$ denotes the set of probability measures on $X$. In particular, if $\mu \in \mathcal{M}$, then $\mu(\sig) \in T((V))$ (exists and) is the expected signature of the random rough path defined by $\mu$.

\begin{lemma}\label{lem:expsig characteristic}
    Let $\mu \in \mathcal{M}$ such that its expected signature has an infinite radius of convergence, that is 
    \begin{equation}\label{eq:infinite radius of conv}
    \forall \lambda > 0\colon \sum_{m=1}^\infty \lambda^m \mu\left( \|\sig^m\|\right) < \infty.
    \end{equation}
    Then the expected signature of $\mu$ characterises it, that is 
    \begin{equation}\label{eq:exp signature characteristic}
    \forall \nu\in \mathcal{M}\colon \mu(\sig) = \nu(\sig) \implies \mu = \nu.
    \end{equation}
    Furthermore, \eqref{eq:infinite radius of conv} holds if $\mu$ is the law of canonical Brownian motion.
\end{lemma}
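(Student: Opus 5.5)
The plan is to reduce the claim to the characteristic-function result of Chevyrev--Lyons~\cite{chevyrev2016}, and then verify the growth condition for Brownian motion by a direct moment computation.

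\textbf{Step 1: the unitary characteristic function is analytic in $\lambda$.} For any finite-dimensional unitary representation $M\colon V\to \mathfrak{u}(k)$, let $\widetilde M\colon G\Omega_p\to U(k)$ be its development, $\widetilde M(x)=\sum_m M^{\otimes m}(\sig^m(x))$. This series converges absolutely because $\|M^{\otimes m}\|\le\|M\|^m$ and signature levels decay factorially. Define the characteristic function $\Phi_\mu(M)=\mu(\widetilde M)$. It is well-defined since $\widetilde M$ is bounded: it takes values in the unitary group.

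By Chevyrev--Lyons, the maps $\{M\mapsto \Phi_\mu(M)\}$ over all such $M$ characterise $\mu\in\mathcal M_1(G\Omega_p)$. It therefore suffices to show $\Phi_\mu(M)=\Phi_\nu(M)$ for all $M$.

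Fix $M$ and consider $f_\mu(\lambda)=\Phi_\mu(\lambda M)$ for $\lambda\in\mathbb{C}$. Using~\eqref{eq:infinite radius of conv} and Fubini, we get
\[
f_\mu(\lambda)=\sum_m \lambda^m M^{\otimes m}\big(\mu(\sig^m)\big),
\]
an entire function of $\lambda$. This needs domination $\sum_m|\lambda|^m\|M\|^m\|\sig^m\|$, which is $\mu$-integrable by~\eqref{eq:infinite radius of conv}. For complex $\lambda$, $\lambda M$ is not unitary, so I work directly with the series rather than with boundedness.

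\textbf{Step 2: matching coefficients for $\nu$.} The coefficients of $f_\nu$ agree with those of $f_\mu$ whenever $\mu(\sig)=\nu(\sig)$. The subtlety is that $\nu$ is only assumed to lie in $\mathcal M$, not to satisfy~\eqref{eq:infinite radius of conv}, so I cannot expand $f_\nu$ as a power series directly.

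Instead, note that $f_\nu$ restricted to real $\lambda$ is the bounded function $\lambda\mapsto\nu(\widetilde{\lambda M})$, and $\widetilde{\lambda M}(x)$ is entire in $\lambda$ pathwise. The matching moments of $\nu$ give all derivatives at $0$: finite moments let us differentiate under the integral $m$ times at $\lambda=0$, with
\[
\partial_\lambda^m \widetilde{\lambda M}(x)=\sum_k \frac{k!}{(k-m)!}\lambda^{k-m}M^{\otimes k}\sig^k(x),
\]
bounded near $0$ by a factorially controlled series times moments. Then $f_\nu^{(m)}(0)=f_\mu^{(m)}(0)$ for all $m$.

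To conclude equality on all of $\R$, I use a moment-determinacy argument. The pushforward of $\nu$ under $x\mapsto \widetilde{M}$ gives a matrix-valued ``random variable''. Its entries' moment sequences coincide with those of $\mu$, whose moments grow slowly enough (entire generating function) to be determinate in the Carleman sense. The main obstacle is making this rigorous. Following Chevyrev--Oberhauser~\cite{chevyrev2022signature}, the cleanest route is to show that the matching of all polynomial moments of the bounded random matrices $\widetilde{\lambda M}$ under $\mu$ and $\nu$ forces equality of their laws, because bounded random variables are moment-determinate. I would try the following: the entries of $\widetilde{\lambda M}(x)$ are bounded (unitary), and their mixed polynomial moments are linear functionals of the signature, via the shuffle product. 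Specifically, products of entries of developments correspond to the development of the tensor representation $M\otimes 1+1\otimes M$. So these mixed moments are expressible through $\mu(\sig)$ only after an infinite-series interchange. That interchange is justified for $\mu$ by~\eqref{eq:infinite radius of conv}, and for $\nu$ by the boundedness of the resulting function plus analytic continuation of $f_\mu$. I expect this justification to be the delicate point.

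\textbf{Step 3: Brownian motion.} For the final claim, I bound $\E\|\sig^m(B)\|$ with the time coordinate included. By scaling (degree $\|J\|$) and the known formula
\[
\E[\sig(B)]=\exp\Big(e_0+\tfrac12\sum_i e_i\otimes e_i\Big),
\]
the $L^2$ norm of level $m$ is bounded via hypercontractivity / the Wiener chaos bound $\|\sig^J\|_{L^2}\le C^{m}/\Gamma(m/2+1)$. This decays superexponentially, giving~\eqref{eq:infinite radius of conv}. Concretely, using Itô--Stratonovich conversion, each level-$m$ iterated integral lies in chaoses of order $\le m$, with $L^2$ norm at most $c^m/\sqrt{\lfloor m/2\rfloor!}$ times the number $(d+1)^m$ of words. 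This is summable against $\lambda^m$ for every $\lambda$.
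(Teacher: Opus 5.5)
Your Step~2 has a genuine gap, at the one point that matters. After Step~1 you have $\Phi_\mu(M)=\sum_m M^{\otimes m}\mu(\sig^m)$. The lemma then reduces to the same interchange of $\nu$ with the series, and none of your proposed routes delivers it.

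\begin{itemize}
\item \emph{Differentiating under the integral.} Doing this for $f_\nu(\lambda)=\nu(\widetilde{\lambda M})$ near $\lambda=0$ needs a $\nu$-integrable bound on the Taylor remainder of the development. That remainder involves all signature levels at once, and finiteness of each $\nu(\|\sig^m\|)$ does not give such a bound.
\item \emph{Analytic continuation.} Even granting $f_\nu^{(m)}(0)=f_\mu^{(m)}(0)$ for all $m$, a bounded smooth function on $\R$ is not determined by its Taylor coefficients at $0$ (think of $\e^{-1/\lambda^2}$). So analytic continuation of $f_\mu$ says nothing about $f_\nu$ unless $f_\nu$ is quasi-analytic, which would require the very growth control on $\nu$ you lack.
\item \emph{Moment determinacy.} This variant is circular. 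The mixed moments of the entries of $\widetilde{\lambda M}$ under $\nu$ are themselves $\nu$-expectations of developments (for a tensor-product representation). Expressing them through $\nu(\sig)$ needs exactly the interchange you could not justify.
\end{itemize}

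The missing idea is that the growth condition passes from $\mu$ to $\nu$ for free, via the shuffle identity already used in Lemma~\ref{lem: uniform integrability free}. For each word $J\in\{0,\ldots,d\}^m$ we have $(\sig^J)^2=\sum_{I\in\mathcal J_J}\sig^I$, where $\mathcal J_J$ is a family of $\binom{2m}{m}\le 4^m$ words of length $2m$. With the Euclidean norm on $V^{\otimes m}$ this gives
\[
\nu\big(\|\sig^m\|^2\big)=\sum_{J}\,\sum_{I\in\mathcal J_J}\nu(\sig^I)=\sum_{J}\,\sum_{I\in\mathcal J_J}\mu(\sig^I)\le\big(4(d+1)\big)^m\big\|\mu(\sig^{2m})\big\|\le\big(4(d+1)\big)^m\mu\big(\|\sig^{2m}\|\big).
\]
By \eqref{eq:infinite radius of conv}, for every $\Lambda>0$ we have $\mu(\|\sig^{2m}\|)\le\Lambda^{-2m}$ for all large $m$. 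Jensen's inequality $\nu(\|\sig^m\|)\le\nu(\|\sig^m\|^2)^{1/2}$ then yields $\sum_m\lambda^m\nu(\|\sig^m\|)<\infty$ for every $\lambda$. Other norms cost at most a factor exponential in $m$, which is harmless. Your Step~1 Fubini argument now applies verbatim to $\nu$ and gives $\Phi_\nu(M)=\sum_m M^{\otimes m}\nu(\sig^m)=\Phi_\mu(M)$, with no analyticity or determinacy argument needed.

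Two smaller points:
\begin{itemize}
\item The Chevyrev--Lyons characteristic function determines the law $\sig\#\mu$ of the signature, not $\mu$ itself. You still need injectivity of $\sig$ on $G\Omega_p$, which holds because $x^0(t)=t$ rules out tree-like equivalences~\cite{boedihardjo2016}.
\item For Step~3, apply the middle inequality of the display with $\nu=\mu$ the Brownian law. Combined with $\E[\sig(B)]=\exp\big(e_0+\tfrac12\sum_i e_i\otimes e_i\big)$, whose level-$2m$ component has norm at most $C^m/m!$, this proves Step~3 in two lines. It bypasses the It\^o--Stratonovich chaos bookkeeping.
\end{itemize}

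For comparison, the paper reproves none of this. It cites \cite[Prop.~6.1]{chevyrev2016}, which is precisely the statement your Steps~1--2 are rederiving, then applies uniqueness of the signature, and handles the Brownian claim via exponential tail bounds and \cite[Thm.~6.3]{chevyrev2016}.
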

\begin{proof}
    Suppose $\mu$ satisfies the assumptions of the lemma above, and $\nu \in \mathcal{M}$ with $\mu(\sig) = \nu(\sig)$. By~\cite[Prop.\ 6.1]{chevyrev2016}, this implies that the pushforward laws $\sig\#\mu$ and $\sig\#\nu$ coincide. But by~\cite[Theorem 1.1]{boedihardjo2016}, a rough path $\boldsymbol{x} \in G\Omega_p$ is determined uniquely by its signature $\sig(\boldsymbol{x})$ (we use here that $x^0(t) = t$), and therefore $\mu = \nu$.

    Finally, \eqref{eq:infinite radius of conv} follows from standard exponential tail bounds on Brownian motion and e.g.~\cite[Theorem 6.3]{chevyrev2016}.
\end{proof}

We say that a sequence $(\mu_n)$ in $\mathcal{M}$ is \emph{uniformly integrable} if
\begin{equation}\label{eq:mu uniformly integrable}
    \forall m\in \N\colon \sup_{n\in \N} \mu_n\left( \|\sig^m\| \ind_{\{\|\sig^m\| \ge K\}}\right) \longrightarrow 0,\quad K \to \infty.
\end{equation}

\begin{lemma}\label{lem:roughpaths}
    Let $(\mu_n)$ be a uniformly integrable sequence in $\mathcal{M}$, and $\mu \in \mathcal{M}_1(G\Omega_p)$.
    Then the following hold.
    \begin{enumerate}
        \item[(i)] If $\mu_n \implies \mu$ weakly, then $\mu \in \mathcal{M}$ and $\mu_n(\sig) \to \mu(\sig)$ pointwise.\medbreak
        \item[(ii)] If $(\mu_n)$ is tight, and $\mu\in \mathcal{M}$ satisfies \eqref{eq:exp signature characteristic}, then $\mu_n(\sig) \to \mu(\sig)$ pointwise implies $\mu_n \implies \mu$ weakly.
    \end{enumerate}
\end{lemma}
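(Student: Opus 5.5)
For part (i), I would use the standard fact that weak convergence together with uniform integrability upgrades to convergence of expectations of continuous unbounded functionals. Fix $m\in\N$ and a multi-index $J$ of length $m$. The map $\boldsymbol{x}\mapsto \sig^J(\boldsymbol{x})$ is continuous on $G\Omega_p$. This holds because $\sig^m$ depends continuously on the level-$\lfloor p\rfloor$ truncation in $d_p$, which is part of the extension theorem (Theorem~\ref{thm:extension}; see \cite[Thm.~3.7]{terrystflour}). The case $K$ need not be a continuity point, so I would work with truncations rather than indicators.

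\textbf{Integrability of the limit.} For each $K$, the map $\min(\|\sig^m\|,K)$ is bounded and continuous. Weak convergence therefore gives
\[
\mu(\min(\|\sig^m\|,K)) = \lim_n \mu_n(\min(\|\sig^m\|,K)) \le \sup_n \mu_n(\|\sig^m\|).
\]
The right-hand side is finite by \eqref{eq:mu uniformly integrable}, since $\mu_n(\|\sig^m\|)\le K_0+1$ for $K_0$ large. Monotone convergence in $K$ then gives $\mu(\|\sig^m\|)<\infty$, so $\mu\in\mathcal M$.

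\textbf{Convergence of expected signatures.} Let $\phi_K(y)=\max(-K,\min(y,K))$, which is bounded and continuous. Split
\[
\mu_n(\sig^J)-\mu(\sig^J)=\big[\mu_n(\sig^J-\phi_K(\sig^J))\big]+\big[\mu_n(\phi_K(\sig^J))-\mu(\phi_K(\sig^J))\big]+\big[\mu(\phi_K(\sig^J)-\sig^J)\big].
\]
The first bracket is bounded by $\sup_n\mu_n(\|\sig^m\|\ind_{\{\|\sig^m\|\ge K\}})$, which tends to $0$ as $K\to\infty$ by uniform integrability. The third bracket tends to $0$ by dominated convergence, using $\mu\in\mathcal M$. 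The middle bracket tends to $0$ as $n\to\infty$ by weak convergence. Choosing $K$ first and then $n$ gives pointwise convergence $\mu_n(\sig)\to\mu(\sig)$.

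For part (ii), I would use the subsequence argument. Suppose $\mu_n\not\Rightarrow\mu$. Then there exist a bounded continuous $f$, an $\varepsilon>0$ and a subsequence with $|\mu_{n_k}(f)-\mu(f)|\ge\varepsilon$. Since $(\mu_n)$ is tight and $G\Omega_p$ is a complete separable metric space \cite[Prop.~39]{chevyrev2022signature}, Prokhorov's theorem yields a further subsequence converging weakly to some $\nu\in\mathcal M_1(G\Omega_p)$. A subsequence of a uniformly integrable sequence is still uniformly integrable, so part (i) applies and gives $\nu\in\mathcal M$ with $\mu_{n_k}(\sig)\to\nu(\sig)$. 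By hypothesis, also $\mu_{n_k}(\sig)\to\mu(\sig)$, hence $\nu(\sig)=\mu(\sig)$. The characterising property \eqref{eq:exp signature characteristic} of $\mu$ then forces $\nu=\mu$, which contradicts $|\mu_{n_k}(f)-\mu(f)|\ge\varepsilon$ along that subsequence.

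The main technical point I expect is the continuity of $\sig^m$ on $G\Omega_p$ with respect to $d_p$, for every level $m$ and not only $m\le\lfloor p\rfloor$. I would cite the continuity of the Lyons extension map \cite[Thm.~3.10]{terrystflour}, which gives local Lipschitz dependence of every level on the truncated rough path in $p$-variation, and hence continuity of $\sig^m_{0,T}$. Everything else is routine measure theory.
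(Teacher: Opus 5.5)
Your proof is correct and follows essentially the same route as the paper: in (i) you truncate $\sig^J$ with a clamp and control the tails by uniform integrability, and in (ii) you extract weakly convergent subsequences from tightness, use (i) to identify their expected signatures with $\mu(\sig)$, and conclude via the characterising property of $\mu$. The differences are cosmetic. You get $\mu\in\mathcal{M}$ by monotone convergence instead of the paper's $\liminf$ bound, and you make explicit the continuity of $\sig^m$ on $G\Omega_p$ (via continuity of the Lyons extension map), which the paper uses without comment.
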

\begin{proof}
    For $K > 0$ and $x\in \R$, write $\clamp_K(x) \coloneqq \min(K,\max(-K,x))$. To prove (i), assume that $\mu_n \implies \mu$. We first show that $\mu \in \mathcal{M}$. Let $J$ be a multi-index, then
    \begin{align*}
        \mu(|\sig^J|) 
        \le \liminf_{n\to \infty} \mu_n\big(\m|\sig^J|\m\big)
        \le \liminf_{n\to \infty} \mu_n\big(\clamp_K\big(\m|\sig^J|\m\big)\big) + \sup_{n\in\N} \mu_n\big(\m|\sig^J| \ind_{\{|\sig^J| \ge K\}}\big).
    \end{align*}
    Since $\clamp_K(|\sig^J|)$ is continuous and bounded, the first term on the right-hand side converges to $\mu(\clamp_K(|\sig^J|)) < \infty$, and by uniform integrability the second term is finite for some $K > 0$ (in fact goes to zero as $K \to \infty$). We showed that $\mu \in \mathcal{M}$, in particular $\mu(S)$ exists. Next, again for any multi-index $J$ and any $K>0$,
    \begin{align*}
        \big| \mu_n(\sig^J) - \mu(\sig^J)\big|
        & \le \big| \mu_n(\clamp_K(\sig^J)) - \mu(\clamp_K(\sig^J))\big| \\
        &\mmm + \mu_n\big(\m|\sig^J| \ind_{\{|\sig^J| \ge K\}}\big) + \mu\big(\m|\sig^J| \ind_{\{|\sig^J| \ge K\}}\big).
    \end{align*}
    Since $\mu_n \implies \mu$, the first term on the right-hand side goes to zero as $n\to \infty$ for any fixed $K > 0$, so
    \begin{align*}
        \limsup_{n\to \infty}\big| \mu_n(\sig^J) - \mu(\sig^J)\big|
        \le \limsup_{n\to \infty} \mu_n\big(\m|\sig^J| \ind_{\{|\sig^J| \ge K\}}\big) + \mu(\m\big|\sig^J| \ind_{\{|\sig^J| \ge K\}}\big).
    \end{align*}
    By \eqref{eq:mu uniformly integrable}, the right-hand side goes to zero as $K \to \infty$, so the left-hand side is zero. Since $J$ was arbitrary, this implies $\mu_n(\sig) \to \mu(\sig)$ pointwise.

    Now assume that $\mu\in \mathcal{M}$ satisfies \eqref{eq:exp signature characteristic}, $\mu_n(\sig) \to \mu(\sig)$ pointwise, and that $(\mu_n)$ is tight. It suffices to prove uniqueness of subsequential limits, so take some convergent subsequence $\mu_{k(n)} \implies \nu$. By (i) which we've already proved, $\nu\in \mathcal{M}$ and $\mu_{k(n)}(\sig) \to \nu(\sig)$, but also $\mu_{k(n)}(\sig) \to \mu(\sig)$, so $\mu(\sig) = \nu(\sig)$, which implies $\nu = \mu$ by \eqref{eq:exp signature characteristic}.
\end{proof}

The next lemma shows that if we want to apply Lemma~\ref{lem:roughpaths}~(ii) in the case where $\mu$ is the distribution of (canonical rough) Brownian motion, then we get uniform integrability of $(\mu_n)$ for free.
\begin{lemma}\label{lem: uniform integrability free}
    If $\mu_n,\mu \in \mathcal{M},\, n\in \N$ such that $\mu_n(\sig) \to \mu(\sig)$, and $\mu(\|\sig^m\|^2) < \infty$ for all $m\in \N$, then $\sup_{n\in \N} \mu_n(\|\sig^m\|^2) < \infty$, and in particular $(\mu_n)$ is uniformly integrable.
\end{lemma}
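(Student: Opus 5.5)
The plan is to use the shuffle product identity for signatures of geometric rough paths. It turns the second moment $\|\sig^m\|^2$ into a \emph{linear} functional of the signature at level $2m$. Pointwise convergence of expected signatures then gives convergence, and hence boundedness, of these second moments.

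\textbf{Step 1 (reduction to a Euclidean norm).} Since $V^{\otimes m}$ is finite-dimensional, all norms on it are equivalent. It therefore suffices to work with the Euclidean norm in the coordinates $\sig^J$, giving
\[
\|\sig^m\|^2=\sum_{|J|=m}(\sig^J)^2 .
\]
The equivalence constants only affect the bounds by a fixed factor depending on $m$.

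\textbf{Step 2 (shuffle identity).} Every $x\in G\Omega_p(V)$ satisfies
\[
\sig^I(x)\,\sig^J(x)=\sum_{K\in I \sqcup\!\sqcup J}\sig^K(x),
\]
where the sum runs over all shuffles of $I$ and $J$ (with multiplicity). For bounded variation paths this is the classical shuffle identity, obtained from integration by parts, i.e.\ Fubini on simplices. It extends to geometric rough paths by density of $\Omega_1(V)$ in $G\Omega_p(V)$ (Definition~\ref{def:geometricroughpath}). This extension uses that each fixed signature coordinate $\sig^K$ is continuous in the $p$-variation metric; that continuity follows from the continuity part of the Extension Theorem~\ref{thm:extension} as in \cite{terrystflour}. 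Consequently $(\sig^J)^2=\sum_{K\in J\sqcup\!\sqcup J}\sig^K$ for every $J$, and
\[
\|\sig^m\|^2=\ell_m\big(\sig^{2m}\big)
\]
for a fixed linear functional $\ell_m$ on $V^{\otimes 2m}$.

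\textbf{Step 3 (boundedness and uniform integrability).} Since $\mu_n\in\mathcal M$, the level-$2m$ expected signature $\mu_n(\sig^{2m})$ exists, and by linearity $\mu_n(\|\sig^m\|^2)=\ell_m(\mu_n(\sig^{2m}))$. The same identity holds for $\mu$, and by assumption its value $\mu(\|\sig^m\|^2)$ is finite. Pointwise convergence $\mu_n(\sig)\to\mu(\sig)$ gives $\mu_n(\|\sig^m\|^2)\to\mu(\|\sig^m\|^2)<\infty$. A convergent real sequence is bounded, so $\sup_n\mu_n(\|\sig^m\|^2)<\infty$. Uniform integrability \eqref{eq:mu uniformly integrable} then follows from the Markov-type bound
\[
\mu_n\big(\|\sig^m\|\ind_{\{\|\sig^m\|\ge K\}}\big)\le \frac{\mu_n(\|\sig^m\|^2)}{K}\le \frac{\sup_{n'}\mu_{n'}(\|\sig^m\|^2)}{K},
\]
whose right-hand side tends to $0$ as $K\to\infty$, uniformly in $n$.

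\textbf{Main obstacle.} The only nontrivial point is Step 2: justifying the shuffle identity on $G\Omega_p$ rather than only for smooth paths. I would handle it by the density and continuity argument above, citing the standard fact that signatures of geometric rough paths satisfy the shuffle relations (their truncations are group-like elements).
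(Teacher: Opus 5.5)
Your proposal is correct and follows the same route as the paper: the shuffle identity writes $(\sig^J)^2$ as a finite sum of level-$2m$ signature coordinates, so pointwise convergence of expected signatures gives convergence, and hence boundedness, of the second moments. You also make explicit two points the paper leaves implicit, namely the extension of the shuffle identity from bounded-variation paths to $G\Omega_p$ by density and continuity, and the Markov-type bound $\mu_n(\|\sig^m\|\ind_{\{\|\sig^m\|\ge K\}})\le K^{-1}\sup_{n'}\mu_{n'}(\|\sig^m\|^2)$ that yields uniform integrability.
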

\begin{proof}
    For any fixed $m\in \N$ and multi-index $J$ of length $m$,
    the shuffle product identity~\cite[Theorem 2.15]{terrystflour} implies that there exists a set $\mathcal{J}$ of multi-indices of length $2m$ (the ``shuffles'' of $J$ with itself) such that \[
    |\sig^J|^2 = (\sig^J)^2 = \sum_{I \in \mathcal{J}} \sig^I.
    \] Therefore $\mu_n (\sig) \to \mu(\sig)$ implies $\mu_n(|\sig^J|^2) \to \mu(|\sig^J|^2) < \infty$, in particular $\sup_{n\in \N} \mu_n(|\sig^J|^2) < \infty$.
\end{proof}
\
To be able to apply Lemma~\ref{lem:roughpaths}, we need to understand how to prove tightness for sequences of probability measures on $G\Omega_p$, which is the content of the next lemma.

For $\alpha \in (0,1]$ and a function $f\colon [0,T] \to X$ for some normed space $X$, denote by \[
[f]_\alpha \coloneqq \sup_{0 \le s < t \le T} \frac{\|f(t) - f(s)\|}{|t-s|^\alpha},\qquad \|f\|_\alpha \coloneqq [f]_\alpha + \sup_{t\in [0,T]} \| f(t)\|,
\]
the $\alpha$-H\"older seminorm and norm of $f$, respectively. We also write $[f]_\text{Lip}$ for $[f]_\alpha$ if $\alpha = 1$, and similarly for $\|f\|_\text{Lip}$. Sometimes it will be convenient to work with a version of the $\alpha$-H\"older (semi-)norm that only evaluates $f$ on subsequent dyadic points: \[
    [f]_{\alpha,\mathcal{D}} \coloneqq \sup \bigg\{ \frac{|f(t)-f(s)|}{|t-s|^\alpha}\colon 0 \le s < t \le 1, \exists k,n\in \N_0\colon s = k2^{-n}, t=s+2^{-n} \bigg\},
\]
and similarly for $\|f\|_{\alpha,\mathcal{D}}$.

\begin{lemma}\label{lem:tightness}
    Let $(\mu_n)$ be a sequence in $\mathcal{M}$, and suppose that, for some $\alpha > 0$,
    \begin{equation}\label{eq: tightness condition 1}
    \limsup_{n\to \infty} \mu_n(x\colon \|x\|_{\alpha,\mathcal{D}} \ge K) \longrightarrow 0,\quad K \to \infty,
    \end{equation}
    and 
    \begin{equation}\label{eq: tightness condition 2}
    \limsup_{n\to \infty} \mu_n\Bigg(x\colon \sup_{0 < s < t < T} \frac{\| \sig^2_{s,t}(x) \|}{|t-s|^{2\alpha}} \ge K\Bigg) \longrightarrow 0,\quad K \to \infty.
    \end{equation}
    Then $(\mu_n)$ is tight in $G\Omega_p$ for every $p > 1 / \alpha$.
\end{lemma}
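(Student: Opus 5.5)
Under \eqref{eq: tightness condition 1} and \eqref{eq: tightness condition 2}, I will show that for every $\varepsilon>0$ there is a compact set $\mathcal{K}\subset G\Omega_p$ with $\mu_n(\mathcal{K})\ge 1-\varepsilon$ for all large $n$. Since each single $\mu_n$ is a Radon measure on the Polish space $G\Omega_p$, finitely many $n$ cost nothing, so the $\limsup$ in the hypotheses is enough. The candidate compact set is
\[
\mathcal{K}_K \coloneqq \Big\{ \boldsymbol{x}\in G\Omega_p\colon \|x\|_{\alpha'} \le K,\ \sup_{s<t}\frac{\|\sig^2_{s,t}(\boldsymbol x)\|}{|t-s|^{2\alpha'}} \le K \Big\},
\]
where $1/p<\alpha'<\alpha$ and the H\"older norm is now over all pairs, not only dyadic ones. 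Compactness of $\mathcal{K}_K$ in the $p$-variation metric is the standard interpolation fact for rough paths, e.g.\ \cite[Sect.\ 3.3]{lyonsqian} or Friz--Victoir. A set of geometric rough paths bounded in $\alpha'$-H\"older rough path norm is equicontinuous at levels $1$ and $2$, so Arzel\`a--Ascoli gives a uniformly convergent subsequence. Geometricity is preserved under such limits. Interpolation between uniform convergence and a uniform $\alpha'$-H\"older bound then gives convergence in $\beta$-H\"older for any $\beta<\alpha'$, hence in $p$-variation since $p>1/\alpha'$. By Theorem~\ref{thm:extension} the truncation at level $2$ determines the rough path, and $p<3$ in the regime of interest; for general $p$ one would bound levels up to $\lfloor p\rfloor$, which follow from levels $1$ and $2$ by the extension estimates.

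First step: pass from the dyadic H\"older seminorm to the full one. This is the Garsia--Rodemich--Rumsey/Kolmogorov chaining argument. Given $s<t$, write $[s,t]$ as a union of dyadic intervals with at most two of each generation $2^{-k}$, $k\ge k_0$, where $2^{-k_0}\approx t-s$. Then
\[
|x_t-x_s| \le 2[x]_{\alpha,\mathcal D}\sum_{k\ge k_0}2^{-k\alpha}\le C_\alpha [x]_{\alpha,\mathcal D}|t-s|^\alpha.
\]
So $\{\|x\|_{\alpha,\mathcal D}\le K\}\subset\{\|x\|_\alpha\le C_\alpha K\}$, and the $\alpha'$-H\"older norm is bounded too since $T$ is finite. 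The $\sup|x|$ part is controlled because $x$ starts at a fixed point; the paper works modulo translation and has $x^0(t)=t$.

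Second step: handle the second level. Condition \eqref{eq: tightness condition 2} already controls the $2\alpha$-H\"older norm of $\sig^2$ over all pairs, and this implies the $2\alpha'$ bound because $|t-s|\le T$. So for $K$ large, \eqref{eq: tightness condition 1} and \eqref{eq: tightness condition 2} give $\limsup_n \mu_n(\mathcal{K}_{C K}^c)\le\varepsilon$. Enlarging $K$ to absorb the finitely many initial $n$ (each $\mu_n$ is tight by itself) gives tightness.

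The main obstacle is the compactness claim for $\mathcal{K}_K$ in $G\Omega_p$ with the inhomogeneous $p$-variation metric $d_p$. The step that needs care is the interpolation $d_p(\boldsymbol x^n,\boldsymbol x)\lesssim \sup\|\cdot\|^{1-\alpha'/\beta}\cdot(\text{H\"older bound})^{\alpha'/\beta}$ at level $2$. I will do this pointwise:
\[
\|\sig^2_{s,t}(\boldsymbol x^n)-\sig^2_{s,t}(\boldsymbol x)\|\le \min\big(\delta_n,\,2K|t-s|^{2\alpha'}\big),
\]
with $\delta_n\to0$ the uniform error. I then sum the $p/2$ powers over a partition. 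Splitting the partition intervals by length at the threshold where the two bounds cross, and using $p\alpha'>1$, should make the sum vanish as $n\to\infty$.
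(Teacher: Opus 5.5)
Your proposal is correct and follows the paper's route. Chaining converts the dyadic H\"older bound into a full one (the paper's Lemma~\ref{lem: holder dyadic}), and the H\"older-bounded set is then shown to be relatively compact in $G\Omega_p$; you prove this by hand, via Arzel\`a--Ascoli and interpolation, where the paper imports it from \cite[Prop.~1.15]{terrystflour} (bounded $q$-variation plus equicontinuity gives relative compactness in $p$-variation for $q<p$).

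One caveat applies to both arguments. Each tacitly needs $\alpha>1/3$; the paper's version of this is its choice of $1/\alpha<q<\min(3,p)$. So your side remark that for $p\ge 3$ the higher levels follow from levels $1$ and $2$ by extension holds only when $\alpha>1/3$. For $\alpha\le 1/3$ the statement itself fails, because nothing controls level $3$: the deterministic rough paths $\exp\big((t-s)(e_0+n[e_1,[e_0,e_1]])\big)$ satisfy both hypotheses uniformly in $n$ but are unbounded in $d_p$.
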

\begin{proof}
    First of all, by Lemma~\ref{lem: holder dyadic} below, \eqref{eq: tightness condition 1} implies \[
        \limsup_{n\to \infty} \mu_n(x\colon \|x\|_{\alpha} \ge K) \longrightarrow 0,\quad K \to \infty.
    \]
    
    Now let \[
    A_K \coloneqq \left\{ x\colon \|x\|_\alpha \le K\right\} \cap \left\{x\colon \sup_{0 < s < t < T} \frac{\| \sig^2_{s,t}(x) \|}{|t-s|^{2\alpha}} \le K\right\}.
    \] By assumption, $\limsup_{n\to \infty}\mu_n(A_K^{c}) \to 0$ as $K\to \infty$, so it suffices to show that $A_K$ for any $K > 0$ is relatively compact in $G\Omega_p$.
    
    Let $1 / \alpha < q < \min(3,p)$.
    Then by Proposition~1.15 in~\cite{terrystflour}, if $A_K$ is a bounded subset of $G\Omega_q$ and equicontinuous, then $A_K$ is relatively compact in $G\Omega_p$. Equicontinuity follows from a uniform bound on the $\alpha$-H\"older seminorm, and furthermore, for any $x\in A_K$, and any subdivision $(t_i)$ of $[0,T]$, \[
    \sum_i |x(t_i) - x(t_{i-1})|^{q} \le [x]_\alpha \sum_i |t_i - t_{i-1}|^{\alpha q} \le K\sum_i |t_i - t_{i-1}| = KT.
    \] To prove boundedness of $A_K$ as a subset of $G\Omega_q$, it remains to show that the $q$-variation of the second level of the signature is bounded in the sense of \eqref{eq:pvarroughpath}. Indeed, for any subdivision $(t_i)$ of $[0,T]$, \[
    \sum_i \|\sig ^2_{s,t}(x)\|^{q/2} \le K \sum_i |t_i- t_{i-1}|^{\alpha q} \le K T.
    \]
\end{proof}

\begin{lemma}\label{lem: holder dyadic}
    For $f\colon [0,T] \to X$ for some normed space $X$, and $\alpha \in (0,1]$, \[
    [f]_\alpha \le \frac{2}{1-2^{-\alpha}} [f]_{\alpha,\mathcal{D}}.
    \]
\end{lemma}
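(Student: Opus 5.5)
This is the standard chaining argument from the proof of Kolmogorov's continuity criterion. Fix $0 \le s < t \le T$; by scaling we may take $T=1$, and write $h = t-s$. If $h \ge 1/2$, the bound is trivial: $\|f(t)-f(s)\|$ is at most a telescoping sum along dyadic points, and a crude version of the construction below covers it. So assume $h<1/2$ and choose $n_0 \in \N_0$ with $2^{-(n_0+1)} \le h < 2^{-n_0}$.

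We approximate $s$ and $t$ by dyadic points. For $n \ge n_0$, let $s_n = \lceil 2^n s\rceil 2^{-n}$ and $t_n = \lfloor 2^n t\rfloor 2^{-n}$. Then $s_n \downarrow s$ and $t_n \uparrow t$. Consecutive approximations $s_{n+1}, s_n$ are either equal or differ by exactly one dyadic interval of length $2^{-(n+1)}$, and the same holds for $t_n$.

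At the coarsest level $n_0$ we have $0 \le t_{n_0} - s_{n_0}$ or $s_{n_0} > t_{n_0}$. The cleaner route is to start from level $n_0+1$. Since $t - s < 2^{-n_0} = 2\cdot 2^{-(n_0+1)}$, the points $s_{n_0+1} \le t_{n_0+1}$ differ by at most one dyadic step of length $2^{-(n_0+1)}$, or coincide. If instead $s_{n_0+1} > t_{n_0+1}$, then $s$ and $t$ lie in a common dyadic interval of that level, and we start the chain at a finer level where they first separate. Either way, the middle term satisfies $\|f(t_m)-f(s_m)\| \le [f]_{\alpha,\mathcal D} 2^{-m\alpha}$, with $2^{-m} \le 2h$ in the first case. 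In the second case the chains from both sides start from a common dyadic point, so there is no middle term.

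By continuity of $f$ (implicit; otherwise we restrict to dyadic $s,t$ and use density), we obtain
\[
\|f(t)-f(s)\| \le \sum_{n\ge m}\|f(s_{n+1})-f(s_n)\| + \|f(t_m)-f(s_m)\| + \sum_{n\ge m}\|f(t_{n+1})-f(t_n)\|.
\]
Each summand in the two tails is bounded by $[f]_{\alpha,\mathcal D}2^{-(n+1)\alpha}$, so each tail is at most
\[
[f]_{\alpha,\mathcal D}\,\frac{2^{-(m+1)\alpha}}{1-2^{-\alpha}}.
\]
Choosing $m$ as the level at which $2^{-m}\le h$ (adjusting by one level as above), the total is of order
\[
[f]_{\alpha,\mathcal D}\,h^\alpha\Big(1 + \frac{2\cdot 2^{-\alpha}}{1-2^{-\alpha}}\Big) \le \frac{2}{1-2^{-\alpha}}\,[f]_{\alpha,\mathcal D}\,h^\alpha,
\]
using $1-2^{-\alpha}+2^{1-\alpha} \le 2$, since $2^{-\alpha}\le 1$.

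The main obstacle is purely bookkeeping: choosing the starting level $m$ so that the middle term and both geometric tails are controlled by $h^\alpha$ with the stated constant, rather than by $2^\alpha h^\alpha$. I would first try taking $m$ to be the smallest level at which $[s,t]$ contains a full dyadic interval. This gives $2^{-m}\le h$, and at that level $s_m$ and $t_m$ are separated by either one or two dyadic steps. For the two-step case, I would merge one step into a tail by starting that tail one level coarser. If the constant comes out slightly worse, it does not matter for Lemma~\ref{lem:tightness}, which only uses that $[f]_\alpha \lesssim [f]_{\alpha,\mathcal D}$.
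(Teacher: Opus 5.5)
Your argument is correct and uses the same dyadic chaining as the paper: one dyadic step in the middle plus two geometric tails, giving exactly $1+\frac{2\cdot 2^{-\alpha}}{1-2^{-\alpha}}\le\frac{2}{1-2^{-\alpha}}$. The only real difference is where the chain is anchored. Your choice $m=n_0+1$ is the cleaner one, so the worry in your last paragraph is unfounded.

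At level $m=n_0+1$ you have $2^{-m}\le h$, not merely $2^{-m}\le 2h$ as you wrote. Moreover $2^m t\ge 2^m s+1$ gives $\lfloor 2^m t\rfloor\ge\lceil 2^m s\rceil$. So the case $s_m>t_m$ never occurs, and $t_m-s_m\in\{0,2^{-m}\}$. Hence the middle term is at most $[f]_{\alpha,\mathcal D}h^\alpha$, and each tail is at most $[f]_{\alpha,\mathcal D}2^{-\alpha}h^\alpha/(1-2^{-\alpha})$, with no adjustment needed. The case $h\ge 1/2$ also needs no separate treatment: $n_0=0$ covers $h\in[1/2,1)$, and for $h=1$ the interval $[0,1]$ is itself dyadic.

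The alternative you say you would first try is the coarsest level at which $[s,t]$ contains a full dyadic interval. That is precisely the paper's anchoring: it takes the largest dyadic interval $[v_0,w_0]\subset[s,t]$, of length $2^{-n}$, and chains outward greedily with steps of length at most $2^{-(n+1)}$. This anchoring is what produces your two-step case. For $[s,t]=[0.24,0.76]$, both $[1/4,1/2]$ and $[1/2,3/4]$ are maximal. With either choice, the greedy chain on the other side converges to $1/4$ or $3/4$ instead of to $s$ or $t$. The paper's proof can be repaired by allowing one step of length $2^{-n}$ there, since in that case $t-s\ge 2^{1-n}$. Your rounding $s_n=\lceil 2^n s\rceil 2^{-n}$, $t_n=\lfloor 2^n t\rfloor 2^{-n}$ started at level $n_0+1$ handles this automatically.

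One small correction: both proofs genuinely need continuity of $f$ to pass to the limit $f(s_n)\to f(s)$. The lemma fails without it, for example for the indicator function of $\{1/3\}$. So your parenthetical about restricting to dyadic $s,t$ and using density does not remove that hypothesis.
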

\begin{proof}
    Let $M \coloneqq [f]_{\alpha,\mathcal{D}}$, and let $0 \le s < t \le 1$. Suppose that neither $s$ nor $t$ are dyadic, otherwise the proof simplifies. Let $[v_0,w_0] \subsetneq [s,t]$ be the largest dyadic interval of the form $[k2^{-n},(k+1)2^{-n}]$ that is fully contained in $[s,t]$. Then, \[
    \frac{|f(t)-f(s)|}{|t-s|^\alpha} \le \frac{|f(t)-f(w_0)|}{|t-s|^\alpha} + \underbrace{\frac{|f(w_1) - f(w_0)|}{|w_1-w_0|^\alpha}}_{\le M} + \frac{|f(v_0)-f(s)|}{|t-s|^\alpha}.
    \] Now let $n_1 \ge n+1$ be the smallest integer such that $s < v_1\coloneqq v_0 - 2^{-m}$. Then $|t-s| \ge 2 |v_1-v_0|$, so \[
    \frac{|f(v_0)-f(s)|}{|t-s|^\alpha} \le \underbrace{\frac{|f(v_0) - f(v_1)|}{|t-s|^\alpha}}_{\le 2^{-\alpha}M} + \frac{|f(v_1) - f(s)|}{|t-s|^\alpha}.
    \] Continuing in this way, we get \[
    \frac{|f(v_0)-f(s)|}{|t-s|^\alpha} \le M \sum_{i=1}^m 2^{-\alpha i} + \frac{|f(v_m) - f(s)|}{|t-s|^\alpha},
    \] for all $m$. By construction, $s < v_m < s + 2^{-n_m}$ where $n_m \ge n + m \to \infty$, so $f(v_m) \to f(s)$ and therefore letting $m\to \infty$ gives \[
    \frac{|f(v_0)-f(s)|}{|t-s|^\alpha} \le M \sum_{i=1}^\infty 2^{-\alpha i} = M \frac{2^{-\alpha}}{1 - 2^{-\alpha}}.
    \] Proceeding similarly with $\frac{|f(t) - f(w_1)|}{|t-s|^\alpha}$, we obtain \[
    \frac{|f(t)-f(s)|}{|t-s|^\alpha} \le M \bigg( 1 + 2 \frac{2^{-\alpha}}{1-2^{-\alpha}}\bigg) \le \frac{2M}{1-2^{-\alpha}}.
    \] Since $s,t$ were chosen arbitrarily, this finishes the proof.
\end{proof}

The primary remaining ingredient in proving Theorems~\ref{thm:maindyadic-stronger} and~\ref{thm:mainOA-stronger} is to confirm the conditions of Lemma~\ref{lem:tightness}. This is the content of the next two lemmas.

\begin{lemma}\label{lem:tightness dyadic}
    Let $\alpha \in (1/3,1/2)$. Suppose we have probability measures $\mu_n$ on $C^1([0,1],\R)$ that match the Brownian expected signature up to some degree $D > \frac{1}{1/2-\alpha}$ on dyadic intervals up to order $m_n\in \N$ (that is, on intervals of the form $[k2^{-m}, (k+1)2^{-m}]$ for $m \le m_n$), and 
    \begin{equation}\label{eq:lip condition}
    \limsup_{n\to \infty} \mu_n\Big( x\colon \|x\|_\text{Lip} \ge 2^{m_n(1-\alpha)} C\Big) \longrightarrow 0,\quad C \to \infty.
    \end{equation}
     Then, $(\mu_n)$ is tight in $G\Omega_p$ for all $p \in (1/\alpha, 3)$.
     
     If $\mu_n$ is supported on piecewise linear paths with maximum slope $\sqrt{N_n}$ where $N_n = 2^{k_n}$, then \eqref{eq:lip condition} is implied by $k_n \le 2(1-\alpha) m_n + C$ for some $C > 0$.
\end{lemma}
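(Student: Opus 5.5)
The plan is to verify the two conditions \eqref{eq: tightness condition 1} and \eqref{eq: tightness condition 2} of Lemma~\ref{lem:tightness} with exponent $\alpha$, which then gives tightness in $G\Omega_p$ for every $p>1/\alpha$ (and the restriction $p<3$ is only there to stay in the range of Lemma~\ref{lem:tightness}'s proof). The key idea is to split the dyadic intervals into two regimes. On \emph{coarse} dyadic intervals $I=[k2^{-m},(k+1)2^{-m}]$ with $m\le m_n$, the expected signature of $\mu_n$ restricted to $I$ matches the Brownian one up to degree $D$. By the shuffle identity (as in Lemma~\ref{lem: uniform integrability free}), this gives exact Brownian moments $\mu_n(|x_I^1|^{2r}) = \E|B_I|^{2r} \lesssim 2^{-mr}$ for $2r\le D$, and likewise $\mu_n(\|\sig^2_I\|^{r}) \lesssim 2^{-mr}$ for $2r\le D$ (degree of $(\sig^2)^r$ is at most $2r$, counting time indices twice and using time-scaling). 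On \emph{fine} intervals ($m>m_n$) we use only the Lipschitz bound: if $\|x\|_{\mathrm{Lip}}\le C2^{m_n(1-\alpha)}$, then $|x_I^1|\le C2^{m_n(1-\alpha)}2^{-m}\le C2^{-m\alpha}$ since $m\ge m_n$, and $\|\sig^2_I\|\le \tfrac12 C^2 2^{2m_n(1-\alpha)}2^{-2m}\le C^2 2^{-2m\alpha}$.

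For the coarse regime I will use a union bound together with Markov's inequality. For each level $m\le m_n$,
\[
\mu_n\big(\exists k: |x^1_{I_{k,m}}| \ge K2^{-m\alpha}\big) \le 2^m \frac{C_r 2^{-mr}}{K^{2r}2^{-2mr\alpha}} = C_rK^{-2r}2^{m(1-2r(1/2-\alpha))}.
\]
This is summable over $m$, uniformly in $n$, as soon as $2r(1/2-\alpha)>1$. Choosing $2r$ the largest even integer $\le D$ (or rather using $D>1/(1/2-\alpha)$ directly with $2r=D$ if $D$ is even, handling parity by one-less) makes the sum $O(K^{-2r})$, which goes to zero uniformly in $n$. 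The same computation with $\|\sig^2_I\|^{r}$, $r$ chosen so that $2r\le D$, controls $\|\sig^2_I\|/|I|^{2\alpha}$ on coarse dyadic intervals. Combined with the fine regime on the event $\{\|x\|_{\mathrm{Lip}}\le C2^{m_n(1-\alpha)}\}$, whose complement has small $\limsup$ by \eqref{eq:lip condition}, this gives \eqref{eq: tightness condition 1}; the sup-norm part follows from $x(0)=0$ and the Hölder seminorm.

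The main obstacle is \eqref{eq: tightness condition 2}: it requires a bound on $\sig^2_{s,t}$ over \emph{all} $s<t$, not just dyadic ones. To obtain it, I will prove a second-level analogue of Lemma~\ref{lem: holder dyadic}. Decompose $[s,t]$ into a chain of maximal dyadic intervals, as in that proof, and apply Chen's identity \eqref{eq:chenlevel2} repeatedly, so that
\[
\sig^2_{s,t}=\sum_j \sig^2_{I_j}+\sum_{i<j} x^1_{I_i}\otimes x^1_{I_j}.
\]
The first sum is bounded by the dyadic second-level bound times a geometric series in $2^{-2\alpha\cdot}$. The cross terms are bounded by $[x]_\alpha^2|t-s|^{2\alpha}$ up to constants, again using the geometric decay of the interval lengths on each side. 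This reduces \eqref{eq: tightness condition 2} to the dyadic estimates already obtained.

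The final statement about piecewise linear paths is immediate: the Lipschitz constant is at most $\sqrt{N_n}=2^{k_n/2}$, and $k_n\le 2(1-\alpha)m_n+C$ gives $2^{k_n/2}\le 2^{C/2}2^{m_n(1-\alpha)}$. Hence the probability in \eqref{eq:lip condition} vanishes for all large enough constants.
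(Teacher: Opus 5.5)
Your skeleton matches the paper's proof. You split at scale $2^{-m_n}$, control coarse dyadic intervals by matched moments with Markov and a union bound, control finer ones by \eqref{eq:lip condition}, pass to non-dyadic intervals via Lemma~\ref{lem: holder dyadic}, and handle the piecewise-linear claim by the slope computation, which is fine. Two steps, however, do not hold as written.

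The first is the bound $\mu_n(\|\sig^2_I\|^r)\lesssim 2^{-mr}$ for $2r\le D$, which rests on a wrong degree count. Under \eqref{eq:degreeJ} the time index counts \emph{twice}, so $\sig^{(10)}$ and $\sig^{(01)}$ have degree $3$. By the shuffle identity, $(\sig^{(10)})^r$ is then a combination of terms of degree $3r$, and $3r>D$ for the $r\approx D/2$ your union bound needs. These moments are therefore not fixed by the matching. The paper's estimate $\E_n|\sig^{(10)}_{v,w}|^D\le c(D)|v-w|^{3D/2}$ has the same defect.

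In $d=1$ you need no moments at level two at all. We have $\sig^{(11)}_{s,t}=\tfrac12(x_t-x_s)^2$, $\sig^{(00)}_{s,t}=\tfrac12(t-s)^2$, $|\sig^{(10)}_{s,t}|=|\int_s^t(x_u-x_s)\diff u|\le[x]_\alpha(t-s)^{1+\alpha}$, and $\sig^{(01)}=\sig^{(0)}\sig^{(1)}-\sig^{(10)}$. Hence $\|\sig^2_{s,t}\|\le c\,(1+[x]_\alpha)^2|t-s|^{2\alpha}$ for \emph{all} $s<t$, and \eqref{eq: tightness condition 2} follows directly from \eqref{eq: tightness condition 1}. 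Your Chen-identity chaining is correct. It even addresses a point the paper skips: the paper treats only dyadic intervals and intervals shorter than $2^{-m_n}$, although \eqref{eq: tightness condition 2} is a supremum over all $s<t$. With this deterministic bound, though, the chaining becomes unnecessary; it would only matter for $d>1$, where the L\'evy area must be estimated probabilistically.

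The second problem is the parity fix, and this is a genuine gap. Odd absolute moments $\E|x^1_I|^{D}$ are not signature moments, so for odd $D$ the highest usable moment is $D-1$. The union bound then needs $(D-1)(\tfrac{1}{2}-\alpha)>1$, which does not follow from $D(\tfrac{1}{2}-\alpha)>1$. For $D=7$ the hypothesis allows $\alpha\in(\tfrac{1}{3},\tfrac{5}{14})$, while $6(\tfrac{1}{2}-\alpha)>1$ forces $\alpha<\tfrac{1}{3}$. So the argument breaks on the entire admissible range, which is exactly the degree-$7$ case invoked in the remark after Theorem~\ref{thm:maindyadic}.

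The paper's proof has the same hole: it bounds $\E|x(w)-x(v)|^D$ by the Gaussian absolute moment, which is legitimate only for even $D$. What both arguments actually establish is the lemma under the hypothesis $2\lfloor D/2\rfloor>1/(\tfrac{1}{2}-\alpha)$. You should state that hypothesis explicitly instead of claiming the odd case by dropping one moment.
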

\begin{lemma}\label{lem:tightness OA}
    Let $N_n, D_n\in \N$ such that $N_n , D_n \to \infty$, and suppose that $\mu_n$ for $n\in \N$ is an atomic probability measure on $C^1([0,1],\R)$ consisting of piecewise linear paths of step length $1 / N$ and step size $\pm 1 / \sqrt{N}$ such that every set of $D_n$ increments are independent. Then $(\mu_n)$ is tight in $G\Omega_p$ for all $p\in(2,3)$.
\end{lemma}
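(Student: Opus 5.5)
We will verify the two conditions \eqref{eq: tightness condition 1} and \eqref{eq: tightness condition 2} of Lemma~\ref{lem:tightness}, for every $\alpha<1/2$. Together with Lemma~\ref{lem: holder dyadic} this gives tightness in $G\Omega_p$ for every $p>1/\alpha$, hence for all $p\in(2,3)$. Write $x$ for a path drawn from $\mu_n$, $N=N_n$, and $\xi_1,\dots,\xi_N$ for its $\pm1$ increments, which are $D_n$-wise independent.

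\textbf{Step 1 (moments of increments between grid points).} For grid points $s=a/N<t=b/N$ we have $x(t)-x(s)=N^{-1/2}\sum_{a<i\le b}\xi_i$. Fix an even $q$. Once $D_n\ge q$, the $q$-th moment of this sum is a polynomial in joint moments of at most $q$ of the $\xi_i$. These are exactly the moments of i.i.d.\ Rademacher variables, so $\E|x(t)-x(s)|^q\le C_q|t-s|^{q/2}$ by Khintchine's inequality. For $|t-s|<1/N$ inside a single step, linearity gives $|x(t)-x(s)|=\sqrt N|t-s|\le|t-s|^{1/2}$ deterministically. A general pair is split into at most two partial steps plus a grid increment. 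This yields $\E|x(t)-x(s)|^q\le C_q|t-s|^{q/2}$ for all $s,t$ and all large $n$.

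\textbf{Step 2 (Kolmogorov chaining for condition \eqref{eq: tightness condition 1}).} Apply Markov's inequality to each dyadic interval of length $2^{-m}$. Summing over its $2^m$ translates gives
\[
\mu_n\big([x]_{\alpha,\mathcal D}\ge K\big)\le \sum_m 2^m K^{-q}C_q 2^{-mq(1/2-\alpha)},
\]
which is summable once $q(1/2-\alpha)>1$ and tends to $0$ as $K\to\infty$ uniformly in large $n$. The sup-norm part is controlled in the same way from $x(0)=0$.

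\textbf{Step 3 (condition \eqref{eq: tightness condition 2}).} This is the main obstacle. We first treat the level-two terms between grid points. Since the paths are piecewise linear, the level-two signature over $[a/N,b/N]$ is
\[
N^{-1}\Big(\sum_{a<i<j\le b}\xi_i\xi_j+\tfrac12\sum_{a<i\le b}\xi_i^2\Big),
\]
together with the time–space cross terms, which are bounded by $|t-s|\,\|x\|_\alpha$-type quantities. Its $q$-th moment again involves at most $2q$ of the $\xi_i$ at a time. So once $D_n\ge 2q$ it coincides with the i.i.d.\ moment, and that is $\le C_q|t-s|^{q}$ by hypercontractivity of degree-two Rademacher chaos.

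Next we pass to a uniform bound. Chen's identity expresses $\sig^2_{s,t}$ for general $s<t$ through $\sig^2$ over dyadic intervals and products of first-level increments. A dyadic chaining argument with Markov bounds, as in the standard Kolmogorov criterion for rough paths, then gives
\[
\sup_{s<t}\|\sig^2_{s,t}\|/|t-s|^{2\alpha}<\infty
\]
with probability close to one, uniformly in $n$. On sub-step scales the deterministic bound $\|\sig^2_{s,t}\|\le N|t-s|^2\le|t-s|$ applies, and it suffices because $2\alpha<1$.

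The delicate points are the following. First, $q$ must be fixed before sending $n\to\infty$, which is where $D_n\to\infty$ is used: every fixed moment eventually agrees with the i.i.d.\ case. Second, dyadic intervals need not align with the grid $1/N$. This is harmless if $N$ is a power of two; otherwise it is handled by the partial-step argument of Step 1.
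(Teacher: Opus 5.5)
Your proof is correct and has the same skeleton as the paper's. You check \eqref{eq: tightness condition 1}--\eqref{eq: tightness condition 2} of Lemma~\ref{lem:tightness} for every $\alpha<1/2$, using that once $D_n$ exceeds a fixed moment order, all relevant moments equal the i.i.d.\ Rademacher ones. You then apply Markov's inequality and a union bound over dyadic intervals. Level one is essentially the paper's argument: you use Khintchine instead of the binomial central-moment bound, and your partial-step argument instead of the assumption $N_n=2^{m_n}$.

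At level two the routes differ. The paper reduces to $\sig^{(10)}$ and writes it on a dyadic interval as an increment term plus $N^{-3/2}\sum_j (j-1)\varepsilon_j$. It bounds the moments of the latter combinatorially and union-bounds over dyadic intervals only. It leaves implicit the passage to the supremum over all $s<t$ that \eqref{eq: tightness condition 2} requires. You instead bound the time--space terms deterministically by the H\"older norm and treat the space--space term as a degree-two chaos via hypercontractivity. You then obtain uniformity in $(s,t)$ from Chen's identity and the Kolmogorov criterion for rough paths. This is heavier but complete. It is also the argument that genuinely extends to $d>1$, where the off-diagonal level-two terms are quadratic chaos rather than functions of the increments.

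For the one-dimensional statement itself, both level-two arguments do more than is needed. We have $\sig^{(11)}_{s,t}=\tfrac12\bigl(x(t)-x(s)\bigr)^2$ and $\bigl|\sig^{(10)}_{s,t}\bigr|=\bigl|\int_s^t\bigl(x(r)-x(s)\bigr)\diff r\bigr|\le|t-s|^{1+\alpha}[x]_\alpha\le|t-s|^{2\alpha}[x]_\alpha$, together with $\sig^{(01)}=\sig^{(0)}\sig^{(1)}-\sig^{(10)}$. Hence \eqref{eq: tightness condition 2} follows deterministically from \eqref{eq: tightness condition 1} and Lemma~\ref{lem: holder dyadic}.
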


Before proving Lemmas~\ref{lem:tightness dyadic} and~\ref{lem:tightness OA}, we show how they imply Theorems~\ref{thm:maindyadic-stronger} and~\ref{thm:mainOA-stronger}.

\begin{proof}[Proof of Theorems~\ref{thm:maindyadic-stronger} and~\ref{thm:mainOA-stronger}]
    Let $(\lambda_i^n,\omega_i^n)$ be the \short cubature specified in Theorem~\ref{thm:maindyadic} (resp.\ Theorem~\ref{thm:mainOA}), and let $\mu_n$ be the associated atomic probability measure on $G\Omega_p$: \[
    \mu_n = \sum_i \lambda_i^n \delta_{\omega_i^n}.
    \] Let $\mu$ denote the probability distribution of canonical Brownian motion on $G\Omega_p$, then by the universal limit theorem for rough paths (Theorem~\ref{thm:universal limit theorem}) it suffices to show that $\mu_n \implies \mu$. To do so, we verify the assumptions of Lemma~\ref{lem:roughpaths} (ii).

    By Lemma~\ref{lem:tightness dyadic} (resp.\ Lemma~\ref{lem:tightness OA}), $(\mu_n)$ is tight. Since $\mu_n$ is atomic it is a member of $\mathcal{M}$, and because $D_n \to \infty$ (and $N_n \to \infty$ for Theorem~\ref{thm:mainOA}) we have $\mu_n(\sig) \to \mu(\sig)$ pointwise, and therefore by Lemma~\ref{lem: uniform integrability free} also uniformly integrability. Finally, by Lemma~\ref{lem:expsig characteristic} we have that $\mu\in \mathcal{M}$ and that it satisfies \eqref{eq:exp signature characteristic}.
\end{proof}
\begin{proof}[Proof of Lemma~\ref{lem:tightness OA}]
    Since $\mu_n$ is atomic it is automatically in $\mathcal{M}$. Thus by Lemma~\ref{lem:tightness} it suffices to prove \eqref{eq: tightness condition 1} and \eqref{eq: tightness condition 2} for arbitrary $\alpha \in (0,1/2)$.
    
    We assume for simplicity that $d = 1$; the proof for $d > 1$ involves more notation but is otherwise the same. We further assume for simplicity that $N_n = 2^{m_n}$ for some $m_n\in \N$, $m_n \to \infty$. Fix $n\in \N$, and abbreviate, if no confusion is possible $m = m_n$, $N = N_n$, etc. Consider a particular dyadic interval $[v,w]$ of length $2^{-k}$. Note that if $k \ge m$, then, for any $x$ in the support of $\mu_n$, 
    \begin{equation}\label{eq: lem uniform holder bound:1}
    \frac{|x(w)-x(v)|}{|w-v|^\alpha} \le \frac{N^{-1/2}}{2^{-k\alpha}} = 2^{k\alpha - m / 2} \le 2^{-(1/2 - \alpha) m} \to 0,\quad n\to \infty.
    \end{equation}
    Hence let $k < m$. Then the increment $x(w) - x(v)$, under the probability measure $\mu_n$, is a sum $S$ of $2^{m - k}$ Rademacher random variables, any $D$ of which are independent, multiplied with $1 / \sqrt{N}$.
    In particular, moments of $S$ up to order $D$ coincide with the moments of the same sum where \emph{all} Rademacher variables are independent,
    which has the same distribution as $\frac{1}{\sqrt{N}}\big(2\text{Bin}\big(2^{m-k},\frac 12\big) - 2^{m-k}\big)$.
    Thus, using Markov's inequality, for any $l\in\N$ with $2l \le D$,
    \begin{align*}
        \mu_n\left(\frac{|x(w)-x(v)|}{|w-v|^\alpha} \ge C\right)
        &= \mu_n\Big(|S| \ge C 2^{-k\alpha}\Big)\\
        &\le C^{-2l} 2^{2\alpha lk} \E \big[\sig^{2l}\m\big]\\
        &= C^{-2l} 2^{2\alpha lk} \big(2/\sqrt{N}\m\big)^{2l}\E\Bigg[ \bigg(\textrm{Bin}\bigg(2^{m-k},\frac 12\bigg) - 2^{m-k-1}\bigg)^{2l}\m\Bigg]\\
        &\le c(l)\m C^{-2l} 2^{2\alpha lk} 2^{-ml} \big(2^{m-k-1}\big)^l\\
        &\le c(l)\m C^{-2l} 2^{-(1-2\alpha) lk}.
    \end{align*}
    Here we used the well-known bound $\E[(\text{Bin}(n,p)-np)^{2l}] \le c(l) (np)^l$.
    Using union bound over all $2^k$ such dyadic intervals, as well as over $k\in\{0,\ldots,m-1\}$, we get, as long as $2^{-(1/2-\alpha) m} \le C$ (recall \eqref{eq: lem uniform holder bound:1}), 
    \begin{align*}
    \mu_n \big(x\colon [x]_{\alpha,\mathcal{D}} \ge C\big) 
    &\le c(l)\sum_{k=0}^{m_n} 2^k C^{-2l} 2^{-(1-2\alpha) lk}\\
    &= c(l)\m C^{-2l} \sum_{k=0}^{m_n} 2^{-k[(1-2\alpha)l - 1]}.
    \end{align*}
    Now if $l > 1 / (1-2\alpha)$, then we get an upper bound of the form \[
        \mu_n \big(x\colon [x]_{\alpha,\mathcal{D}} \ge C\big) \le c(l,\alpha)\m C^{-2l}
    \] for some constant $c = c(l,\alpha) > 0$. Since $\alpha$ is fixed, we can choose $l = l(\alpha) = \left\lceil 1 / (1-2\alpha) \right\rceil$ and then for all $n \ge n_0(\alpha)$ we have $D_n \ge l$ and therefore \[
    \limsup_{n\to \infty} \mu_n(x\colon [x]_{\alpha,\mathcal{D}} \ge C) \le c(\alpha)\m C^{-2l} \longrightarrow 0,\quad n\to \infty.
    \]

    It remains to show \eqref{eq: tightness condition 2}. Note first that $\sig^{(00)}_{s,t}(x) = (t-s)^2/2$ and $\sig^{(11)}_{s,t}(x) = (x(t)-x(s))^2/2$, so it suffices to study $\sig^{(10)}$ and $\sig^{(01)}$. Since further $\sig^{(10)} + \sig^{(01)} = \sig^{(1)} \sig^{(0)}$, it in fact suffices to study only $\sig^{(10)}$. Now, if $(s,t)=(v,w)$ is dyadic as above with $w-v = 2^{-k}$, and the $N' \coloneqq 2^{m-k}$, $D$-wise independent Rademacher variables determining the increments in $[v,w]$ are denoted $\varepsilon_i$, then 
    \begin{align*}
        \sig^{(10)}_{v,w}(x)
        = \sum_{i=1}^{2^{m-k}} \frac{\varepsilon_i}{2N^{3/2}} + \sum_{i < j} \frac{\varepsilon_j}{N^{3/2}}.
    \end{align*}
    The first term is just $\frac{1}{2N} (x(w)-x(v)) \le (w-v)(x(w)-x(v))$, the absolute value of which is at most $C|w-v|^{1+\alpha} \le C |w-v|^{2\alpha}$ with high probability by what we already proved. The second term is equal to $N^{-3/2}\sum_j (j-1) \varepsilon_j$; note that 
    \begin{align*}
    \E\Bigg[ \bigg( \sum_j (j-1) \varepsilon_j \bigg)^{2l}\m\Bigg] 
    &= \sum_{j_1,\ldots,j_{2l}=1}^{N'} \prod_{i=1}^{2l} \underbrace{(j_i-1)}_{\le N'} \underbrace{\E\left[ \varepsilon_{j_1}\ldots \varepsilon_{j_{2l}}\right]}_{\in\{0,1\}}\\
    &\le \sum_{j_1,\ldots,j_{2l}=1}^{N'} N'^{2l} \E\big[ \varepsilon_{j_1}\ldots \varepsilon_{j_{2l}}\big]\\
    &= \E\Bigg[ \bigg( \sum_j N' \varepsilon_j \bigg)^{2l}\m\Bigg] \\
    &= N'^{2l} \E\Bigg[\bigg(2\m\text{Bin}\bigg(N',\frac 12\bigg) - N'\bigg)^{2l}\m\Bigg]\\
    &\le c(l) N'^{3l} .
    \end{align*}
    All together, this gives 
    \begin{align*}
    \mu_n\bigg(\m\bigg|\sum_{i < j} \frac{\varepsilon_j}{N^{3/2}}\bigg| \ge C |v-w|^{2\alpha}\bigg) 
    &\le c(l) C^{-2l} 2^{4lk\alpha} \bigg(\frac{N'}{N}\bigg)^{3l}\\
    &\le c(l) C^{-2l} 2^{lk(4\alpha -3)},
    \end{align*}
    which, with the usual union bound over all $2^{-k}$-based dyadic intervals and $k=0,\ldots,m$, gives an upper bound of 
    \[
    c(l) C^{-2l} \sum_{k=0}^m 2^{k[1+l(4\alpha - 3)]},
    \]
    which again is summable for sufficiently large $l=l(\alpha)$.
\end{proof}

\begin{proof}[Proof of Lemma~\ref{lem:tightness dyadic}]
    Consider a particular dyadic interval $[v,w]$ of length $2^{-k}$ with $k \ge m$, then \[
    \frac{|x(w)-x(v)|}{|w-v|^\alpha} \le \|x\|_\text{Lip} |v-w|^{1-\alpha} \le \|x\|_\text{Lip}\m 2^{-k(1-\alpha)} \le \|x\|_\text{Lip}\m 2^{-m(1-\alpha)}.
    \] Therefore, 
    \begin{equation}\label{eq:prf tightness lemma 1}
    \mu_n\Bigg( x\colon \sup_{|s-t| > 2^{-m_n}} \frac{|x(t)-x(s)|}{|t-s|^\alpha} \ge C\Bigg) \le \mu_n\Big(x\colon \|x\|_\text{Lip} \ge 2^{m_n(1-\alpha)} C\Big).
    \end{equation}
    If $\mu_n$ is supported on paths with Lipschitz constant $\sqrt{N_n} = 2^{k_n / 2}$, then it is sufficient if $k_n \le 2(1-\alpha) m_n + C$ for some $C > 0$. By \eqref{eq:lip condition}, the $\limsup_{n\to \infty}$ of the left-hand side above goes to zero as $C\to \infty$.

    Now if $[v,w]$ is a dyadic interval of length $2^{-k}$ for some $k \le m$, then the expected signature on that interval matches that of Brownian motion up to order $D$; in particular, the moments of the increment $|x(w)-x(v)|$ matches up to order $D$, so 
    \begin{align*}
        \mu_n \left( \frac{|x(w)-x(v)|}{|w-v|^\alpha} \ge C\right)
        &\le C^{-D} 2^{k\alpha D} \E\Big[|x(w)-x(v)|^D\Big]\\
        &\le C^{-D} 2^{k\alpha D} \E\Big[|\m\mathcal{N}(0,2^{-k})|^D\Big]\\
        &= C^{-D} 2^{k\alpha D} 2^{-kD / 2} \E\Big[|\m\mathcal{N}(0,1)|^D\Big]\\
        &\le c(D) C^{-D} 2^{-kD (1/2 - \alpha)}.
    \end{align*}
    Given that $D > 1 / (1/2 - \alpha)$, we can proceed just as in the proof of Lemma~\ref{lem:tightness OA} to obtain \[
    \limsup_{n\to \infty}\mu_n\Bigg( x\colon \sup_{|s-t| \le 2^{-m_n}} \frac{|x(t)-x(s)|}{|t-s|^\alpha} \ge C\Bigg) \longrightarrow 0,\quad C \to \infty,
    \] 
    complementing \eqref{eq:prf tightness lemma 1}.
    
    For the second level, just like in the proof of Lemma~\ref{lem:tightness OA} it suffices to study $\sig^{(10)}$. If $[v,w]$ is a dyadic interval of length $2^{-k}$ for some $k \le m_n$, then
    \begin{align*}
        \mu_n\Bigg(\frac{|\sig^{(10)}_{v,w}|}{|v-w|^{2\alpha}} \ge C\Bigg)
        &\le C^{-D} |v-w|^{-2\alpha D} \underbrace{\E_n\left[ |\sig^{(10)}_{v,w}|^D\right]}_{\le c(D) |v-w|^{3D/2}}\\
        &\le c(D) C^{-D} 2^{-2kD(3/4-\alpha)}.
    \end{align*}
    Again we proceed with a union bound over all dyadic intervals of length $2^{-k}$ and over all $k \le m_n$, which succeeds as long as \[
    2D(3/4 - \alpha) > 1 \iff 2D > \frac{1}{3/4-\alpha} \iff D > \frac{1}{3/4 - \alpha},
    \] which follows from our assumption that $D > 1 / (1/2 - \alpha)$.

    If $v-w < 2^{-m}$, then 
    \begin{align*}
        |\sig^{(10)}_{v,w}| \le \|x\|_\text{Lip} |\sig^{(00)}_{v,w}| = \frac 12 \|x\|_\text{Lip} |v-w|^2,
    \end{align*}
    so
    \begin{align*}
        \limsup_{n\to \infty}\mu_n\Bigg(\frac{|\sig^{(10)}_{v,w}|}{|v-w|^{2\alpha}} \ge C\Bigg)
        &\le \limsup_{n\to \infty} \mu_n\Big( \|x\|_\text{Lip} \ge 2C 2^{2m_n(1-\alpha)}\Big) \longrightarrow 0,\quad C \to \infty.
    \end{align*}
\end{proof}

\subsection{Product Cubatures}
We can think of the construction in Step~1 of the \short algorithm in a slightly different way that lends itself to an obvious generalisation. The set of $2^d$ paths on $[0,1]$ that linearly interpolate between $0$ and a point of the form $(\pm 1,\ldots, \pm 1)$ is itself a degree $3$ cubature on $d$-dimensional Wiener space. What we have done is to scale this cubature onto an interval of length $1/N$ and concatenate $N$ independent copies of it, obtaining a new cubature on $[0,1]$. From this perspective, we can more generally take any existing cubature, the \emph{base cubature}, say of some degree $m$ and size $n$, scale it onto an interval of length $1/N$, and concatenate $N$ independent copies to obtain a new cubature on $[0,1]$ of size $n^N$.

\begin{lemma}\label{lem:product-cubature}
    The product cubature obtained in this way is exact up to order $m$, and $O(N^{-(m-1)/2})$-approximate up to any fixed higher order.
\end{lemma}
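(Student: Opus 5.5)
The plan is to use Chen's identity together with the exponential (tensor) structure of expected signatures. Let $\Phi \in T((V))$ be the expected signature of the base cubature on $[0,1]$, and let $\delta_\lambda$ denote the dilation acting on level-$k$ multi-indices $J$ by $\lambda^{\|J\|}$, with the degree $\|J\|$ from \eqref{eq:degreeJ}. Rescaling a path onto an interval of length $1/N$ with Brownian scaling multiplies $\sig^J$ by $N^{-\|J\|/2}$, so the expected signature of one rescaled copy is $\delta_{N^{-1/2}}\Phi$. Because the $N$ copies are independent and Chen's identity \eqref{eq:chen} gives multiplicativity of signatures under concatenation, the expected signature of the product cubature is $(\delta_{N^{-1/2}}\Phi)^{\otimes N}$, computed in the graded algebra truncated at any fixed degree. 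For Brownian motion, independence of increments and scaling give the same factorisation $E = (\delta_{N^{-1/2}}E)^{\otimes N}$. Since $E$ is the Stratonovich expected signature, we also know explicitly that $E = \exp(\tfrac12\sum_{i=1}^d e_i\otimes e_i + e_0)$, and hence $\delta_{N^{-1/2}}E = \exp(N^{-1}L)$ with $L \coloneqq \tfrac12\sum_i e_i^{\otimes 2} + e_0$, where $L$ is homogeneous of degree $2$.

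Exactness up to order $m$ then follows directly. By hypothesis, $\Phi$ and $E$ agree on all $J$ with $\|J\|\le m$, and $\delta$ preserves degree, so $\delta_{N^{-1/2}}\Phi$ and $\delta_{N^{-1/2}}E$ agree up to degree $m$. The product in $T((V))$ respects the grading by $\|\cdot\|$, so the $N$-fold products agree up to degree $m$ as well.

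For the approximation statement, I would take logarithms in the truncated tensor algebra. Write $\log\Phi = L + R$, where $R$ contains only components of degree $\ge m+1$; this holds because $\log$ is a graded polynomial map in the truncated algebra and $\Phi = E$ up to degree $m$. Then
\[
\delta_{N^{-1/2}}\Phi = \exp\bigl(N^{-1}L + \delta_{N^{-1/2}}R\bigr),
\]
and the degree-$k$ part of $\delta_{N^{-1/2}}R$ is $O(N^{-k/2})$ with $k\ge m+1$. The main obstacle is that $L$ and $R$ do not commute, so the $N$-th power is not simply $\exp(L + N\delta_{N^{-1/2}}R)$. To handle this, I would expand the difference of products telescopically:
\[
a^{N}-b^{N}=\sum_{j=0}^{N-1} a^{j}(a-b)\,b^{N-1-j},
\]
with $a=\delta_{N^{-1/2}}\Phi$ and $b=\exp(N^{-1}L)$. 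Fix a target degree $K$ and look at the degree-$K$ component. The factors $a^j$ and $b^{N-1-j}$ have components bounded uniformly in $N$ and $j$ in each fixed degree: $b^{N-1-j}=\exp(\tfrac{N-1-j}{N}L)$, and $a^j = \delta_{N^{-1/2}}\Phi^{\otimes j}$ is the expected signature of $j$ independent Brownian-scaled copies, whose moments are bounded by Brownian-type moment bounds, in the same spirit as the binomial moment estimate $\E[(\mathrm{Bin}-np)^{2l}]\le c(l)(np)^l$ used in the proof of Lemma~\ref{lem:tightness OA}. The factor $a-b$ has nonzero components only in degrees $\ge m+1$, of size $O(N^{-(m+1)/2})$. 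Summing the $N$ terms therefore gives $O(N\cdot N^{-(m+1)/2}) = O(N^{-(m-1)/2})$ in each fixed degree $K$, which is the claimed rate.

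The delicate point is the uniform-in-$j$ bound on the components of $a^j$. The naive bound grows like $\binom{j}{r}$ in the number of nontrivial factors, which in degree $K$ is at most about $j^{K/2}N^{-K/2}$, and this is bounded for $j\le N$. I would verify this bound by expanding $a = 1 + \sum_{k\ge 1} a_k$ with $a_k = O(N^{-k/2})$. In that expansion, degree-$1$ terms have mean zero in the pure Brownian directions, so $a_1 = 0$, since the base cubature matches $E$ at degree $1$ whenever $m\ge 1$. Consequently every factor contributing to a degree-$K$ word carries at least degree $2$, there are at most $K/2$ of them, and the count is $\le j^{K/2}N^{-K/2} \le 1$.
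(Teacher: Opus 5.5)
Your proposal is correct and takes the same route as the paper. You factor the product cubature's expected signature as the $N$-th tensor power of the rescaled one-step expected signature, match it to the Brownian exponential up to degree $m$, and let an $O(N^{-(m+1)/2})$ one-step discrepancy accumulate over $N$ factors. Your telescoping identity, together with the uniform bound on $a^j$ (which relies on $a_1=0$), supplies the justification for the final expansion step that the paper only asserts; you also correctly write $\exp(e_0+\tfrac12\sum_i e_i\otimes e_i)$, whereas the paper's displayed formula leaves out the harmless $e_0$ term.
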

\begin{proof}
    Denote by $(S_i)_{i=1}^n$ the set of signatures of the paths in the base cubature, and by $(\lambda_i)_{i=1}^n$ their weights. Fix $N\in \N$ and denote by $\tilde{S}_i$ the Brownian scaling of $S_i$ onto an interval of length $1 / N$, that is \[
    \tilde{S}_i^J = N^{-|J|/2} S_i^J.
    \] Denote by $\tilde{S}^{(1)},\ldots,\tilde{S}^{(N)}$ independent random variables distributed according to the scaled cubature, that is \[
    \P\Big(\tilde{S}^{(1)} = \tilde{S}_{i_1},\ldots , \tilde{S}^{(N)} = \tilde{S}_{i_N}\Big) = \prod_{j=1}^N \lambda_{i_j},\quad i_1,\ldots,i_N \in [n].
    \]
    Then the expected signature of the product cubature is 
    \begin{align*}
    \E\left[ \tilde{S}^{(1)} \otimes \ldots \otimes \tilde{S}^{(N)} \right]
    &= \E\left[ \tilde{S}^{(1)}\right]^{\otimes N},
    \end{align*}
    due to the independence. Now, since $(S_i,\lambda_i)$ is a degree $m$ cubature, $\E[\tilde{S}^{(1)}]$ coincides up to degree $m$ with the expected Brownian signature on an interval of length $1/N$, which is $\exp \left( \frac{1}{2N} \sum_{i=1}^d e_i \otimes e_i\right)$. Due to the Brownian scaling, all terms of degree higher than $m$ are of order $N^{-(m+1)/2}$, which we collectively denote $O_{(> m)}(N^{-(m+1)/2})$ implies 
    \begin{align*}
        \E \left[ \tilde{S}^{(1)}\right]^{\otimes N} 
        &= \left( \exp \left( \frac{1}{2N} \sum_{i=1}^d e_i \otimes e_i\right) + O_{(>m)}(N^{-(m+1)/2}) \right)^{\otimes N}\\
        &= \exp \left( \frac 12 \sum_{i=1}^d e_i \otimes e_i\right) + O_{(>m)} \left( N^{-(m-1)/2}\right).
    \end{align*}
    That is, the expected signature is exact up to order $m$ and has an absolute error of order $N^{-(m-1)/2}$ in higher degree terms, which was exactly our claim.
\end{proof}

The example we gave in the beginning, which is used in Step~1 of \short, is the special case where the base cubature is itself the $d$-fold product of the simplest non-trivial one-dimensional Gaussian cubature, a Rademacher random variable (i.e.\ $\pm 1$ with probability $1/2$ each), which has degree $3$.

    \clearpage
\section{Orthogonal Arrays}
A matrix of the form we use to construct the cubature in Step~1 of the ARCANE algorithm is known as an \emph{orthogonal array}.

\begin{definition}
    Let $q,s,R,C\in \N$ with $q \ge 2$ and $2 \le s \le C$. A matrix $A \in \{0,\ldots,q-1\}^{R \times C}$ is called a \emph{$q$-ary orthogonal array of strength $s$}, an $\OA(R, C, q, s)$ for short, if the $R \times s$ submatrix obtained by selecting any $s$ columns has the property that all $q^s$ possible rows appear the same number of times.
\end{definition}

The probabilistic interpretation of a $q$-ary orthogonal array with strength $s$ and $C$ columns is that of a coupling of $C$ uniform random variables on $\{0,\ldots,q-1\}$ that are $s$-wise independent.

In the context of the Rademacher cubature, we are interested in coupling a set of $C = Nw$ Rademacher random variables in an $s$-wise independent way, where $s$ is the desired degree of the approximate cubature.
This is realised by an $\OA(R,C,2,s)$, and then each of the $R$ rows corresponds to one path in the resulting cubature. Therefore, the parameter regime for orthogonal arrays that we are most interested in is that where $q = 2$ and $s$ (e.g.\ $5$ or $7$) are fixed, $C = Nw$ is variable and large, and $R$ is as small as possible as a function of $C$ (given $q$ and $s$).

\begin{remark}
    If we construct a product cubature based on a more complex base cubature with rational weights, then we can couple some number of copies of them in an $s$-wise independent way using a $q$-ary orthogonal array with strength $s$, where $q$ is the smallest integer such that all of the weights of the base cubature can be written as $p / q$ for an integer $p$.
\end{remark}

A well-known \emph{lower} bound on the number of rows of an orthogonal array with given $s,q,C$ is given by \emph{Rao's bound}.

\begin{theorem}[Rao \cite{rao1947}]
    Any $\OA(R,C,q,s)$ satisfies 
    \begin{equation}\label{eq:rao}
    R \ge \sum_{i=0}^{\lfloor s/2 \rfloor} \binom{C}{i} (q-1)^i = \Theta \left( C^{\lfloor s/2\rfloor}\right).
    \end{equation}
\end{theorem}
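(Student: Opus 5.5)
Rao's bound is proved by the standard linear-algebra (dimension) argument: we exhibit many linearly independent vectors in $\R^R$. Let $A$ be an $\OA(R,C,q,s)$ and let $t=\lfloor s/2\rfloor$. Identify $\{0,\ldots,q-1\}$ with $\mathbb{Z}_q$ and fix an orthonormal basis $\chi_0\equiv 1,\chi_1,\ldots,\chi_{q-1}$ of functions on $\mathbb{Z}_q$ with respect to the uniform measure. The characters $\chi_a(x)=\e^{2\pi\iu ax/q}$ work if we allow complex entries; otherwise we can Gram--Schmidt any real basis starting from the constant function. The key property is that $\sum_x \chi_a(x)\overline{\chi_b(x)} = q\,\delta_{ab}$ and $\sum_x\chi_a(x)=0$ for $a\neq 0$.

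For every subset $S\subseteq\{1,\ldots,C\}$ with $|S|\le t$ and every assignment $a\colon S\to\{1,\ldots,q-1\}$, we define a vector $v_{S,a}\in\mathbb{C}^R$ by
\[
(v_{S,a})_r=\prod_{j\in S}\chi_{a(j)}(A_{r,j}).
\]
The number of such pairs $(S,a)$ is exactly $\sum_{i=0}^{t}\binom{C}{i}(q-1)^i$. So it suffices to show that these vectors are pairwise orthogonal and nonzero. Then they are linearly independent, and $R\ge$ their number, since they live in a space of dimension $R$.

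To verify orthogonality, compute $\langle v_{S,a},v_{S',a'}\rangle=\sum_r \prod_{j\in S}\chi_{a(j)}(A_{r,j})\prod_{j\in S'}\overline{\chi_{a'(j)}(A_{r,j})}$. This expression depends only on the columns in $S\cup S'$, a set of size at most $2t\le s$. By the strength-$s$ property, the rows restricted to these columns run uniformly over $\mathbb{Z}_q^{|S\cup S'|}$, each tuple appearing $R/q^{|S\cup S'|}$ times. The sum therefore factorises as
\[
\frac{R}{q^{|S\cup S'|}}\prod_{j\in S\cup S'}\sum_{x\in\mathbb{Z}_q}\chi_{a(j)}(x)\overline{\chi_{a'(j)}(x)},
\]
using the conventions $a(j)=0$ for $j\notin S$ and $a'(j)=0$ for $j\notin S'$. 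Each factor vanishes unless $a(j)=a'(j)$, so the inner product is zero unless $(S,a)=(S',a')$. In that case it equals $R>0$, so every vector is nonzero.

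If $|S\cup S'|<s$, the strength-$s$ property still gives uniformity on those columns, because uniformity on any $s$ columns implies uniformity on subsets of them; this should be noted explicitly. Finally, the asymptotic $\Theta(C^{t})$ follows because the $i=t$ term dominates for fixed $q,s$ as $C\to\infty$. The only mild obstacle is this bookkeeping of why $2t\le s$ is exactly what is needed for the factorisation, which is the reason $\lfloor s/2\rfloor$ appears in the bound.
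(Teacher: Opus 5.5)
Your proof is correct and complete. The vectors $v_{S,a}$ are pairwise orthogonal with $\langle v_{S,a},v_{S,a}\rangle=R$, and this works precisely because $|S\cup S'|\le 2\lfloor s/2\rfloor\le s$. Their number is exactly the right-hand side of the bound.

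The paper does not prove this statement; it only cites Rao, so there is no in-paper argument to compare with. What you wrote is the standard orthogonal-contrast (character) proof of Rao's inequality. It does not require $q$ to be a prime power, which matches the generality in which the paper states the bound.

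Your remark about subsets of columns implicitly assumes that any fewer than $s$ columns extend to a set of exactly $s$ columns. That is guaranteed by the paper's standing assumption $s\le C$ in the definition of an orthogonal array.
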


The Rao bound can be asymptotically achieved when $q=2$ (the case of most interest to us) as well as if $s=2$, but for most parameter combinations with $s,q \ge 3$ the smallest possible asymptotic of $R = R(C)$ is unknown, and in particular whether or not they achieve the asymptotic growth $\Theta(C^{\lfloor s/2 \rfloor})$ suggested by \eqref{eq:rao}. A known positive example is when $q=3$ and $s=4$, see Theorem~\ref{thm:oaconstructions} (4b)).

In the following theorem, we summarise the best construction methods we are aware of (in the asymptotic regime we are interested in, i.e.\ $s,q$ fixed and $C$ large), together with their asymptotic relationship between $R$ and $C$. Most of these are taken or adapted from existing literature, and we go into detail on both the constructions as well as the correct attributions in the subsequent sections.

Efficient JAX~\cite{jax2018github} implementations of all of the following orthogonal arrays are implemented in the ``orray'' python package~\cite{orray}.

\begin{theorem}\label{thm:oaconstructions}
    The following orthogonal arrays can be constructed.
    \begin{enumerate}
        \item[(1)] \textbf{Strength 2}: For a prime power $q$ and $n\in \N$, \[
        \OA\left(q^n,\frac{q^n-1}{q-1},q,2\right), \quad \text{having} \quad R = 1 + (q-1) C,
        \] which attains \eqref{eq:rao} exactly.
        \item[(2)] \textbf{Strength 3:} For a prime power $q$ and $n\in \N$, \[
                \OA\left(q^{3n+1}, q^{2n}, q, 3\right),\quad \text{having} \quad R = q C^{3/2},
            \] which does not attain \eqref{eq:rao} asymptotically.
        \item[(3)] \textbf{Binary Arrays ($q=2$):}
        All of the following arrays attain \eqref{eq:rao} asymptotically.
        \begin{enumerate}
            \item (Bose-Ray-Chaudhuri~\cite{bose-ray1960}) For $n\in \N$ and even $s\in \N$, \[
            \OA\left(2^{n\lfloor s/2 \rfloor},2^n-1,2,s\right),\quad \text{having} \quad R=(C+1)^{\lfloor s/2\rfloor}.
            \]
            \item (Bose-Ray-Chaudhuri~\cite{bose-ray1960}) For $n\in \N$ and odd $s\in \N$, \[
            \OA\left(2^{1+n\lfloor s/2 \rfloor}, 2^n, 2, s\right), \quad \text{having} \quad R = 2 C^{\lfloor s/2 \rfloor}.
            \]
            \item (Kerdock~\cite{kerdock}) For $s=5$ and an even integer $n\ge 4$,
            \[
            \OA\left(2^{2n}, 2^n, 2, 5\right), \quad \text{having} \quad R = C^{\lfloor s/2 \rfloor}. 
            \]
            \item (Delsarte-Goethals~\cite{delsarte-goethals}) For $s=7$ and an even integer $n \ge 4$, \[
                \OA\left(2^{3n-1}, 2^n, 2, 7\right), \quad \text{having} \quad R = \frac 12 C^{\lfloor s/2\rfloor}. 
            \]
        \end{enumerate}
        \item[(4)] \textbf{Ternary Arrays ($q=3$):}
        \begin{enumerate}
            \item For $s=3$, and $n\in \N$,
            \begin{IEEEeqnarray*}{rrCll}
                \OA(3^{4n+1},& 20^n, 3, 3), & \quad \text{having} \quad & R = 3 C^{4 \log 3 / \log 20} &\approx 3 C^{1.467},\\
                \OA(3^{5n+1},& 45^n, 3, 3), & \quad \text{having} \quad & R = 3 C^{5 \log 3 / \log 45} &\approx 3 C^{1.443},\\
                \OA(3^{6n+1},& 112^n,3,3), &\quad \text{having} \quad & R = 3 C^{6 \log 3 / \log 112} &\approx 3 C^{1.397},
            \end{IEEEeqnarray*}
            all three of which do not attain \eqref{eq:rao} asymptotically.
            \item For $s=4$, and $n\in \N$, \[
            \OA\left(3^{2n+1}, 3^n, 3, 4\right), \quad \text{having} \quad R = 3C^2,
            \] which attain \eqref{eq:rao} asymptotically.
        \end{enumerate}
        \item[(5)] \textbf{General:} For a prime number $q\in \N$, $s\in \N$, and $n\in \N$, \[
            \OA\left(q^{ns}, q^n, q, s\right),\quad \text{having} \quad R = C^s,
        \] which does not attain \eqref{eq:rao} asymptotically.
        \end{enumerate}
\end{theorem}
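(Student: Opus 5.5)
Every item will be handled with the same device, so I state it first. Let $c_1,\ldots,c_C\in\mathbb{F}_q^k$ be vectors such that any $s$ of them are linearly independent over $\mathbb{F}_q$, with $q$ a prime power identified with $\{0,\ldots,q-1\}$. Index the rows by $x\in\mathbb{F}_q^k$ and set $A_{x,j}=\langle x,c_j\rangle$. Then $A$ is an $\OA(q^k,C,q,s)$. Indeed, for any $s$ columns $j_1,\ldots,j_s$ the map $x\mapsto(\langle x,c_{j_i}\rangle)_{i\le s}$ is a surjective linear map $\mathbb{F}_q^k\to\mathbb{F}_q^s$, so every fibre has size $q^{k-s}$.

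Equivalently, the rows are the codewords of the linear code with generator matrix $[c_1\cdots c_C]$, whose dual has minimum distance at least $s+1$. This is the linear case of Theorem~\ref{thm:dualdistance}. For nonlinear codes, the general form of that theorem (Delsarte's: dual distance $s+1$ implies strength $s$) replaces this argument. In every item the claimed relation between $R$ and $C$, and the comparison with \eqref{eq:rao}, is elementary arithmetic once the array has been constructed.

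\textbf{Linear items.}
\begin{itemize}
\item[(1)] Take one representative for each point of $\mathrm{PG}(n-1,q)$. Any two distinct points are independent, giving $k=n$ and $C=(q^n-1)/(q-1)$.
\item[(5)] Take $c_x=(1,x,\ldots,x^{s-1})$ for $x\in\mathbb{F}_{q^n}$, written over $\mathbb{F}_q$, so $k=ns$. Any $s$ of them are independent over $\mathbb{F}_{q^n}$ by Vandermonde, hence also over $\mathbb{F}_q$.
\item[(3a)] Take $c_j=(\alpha^j,\alpha^{3j},\ldots,\alpha^{(2t-1)j})$ with $s=2t$ and $\alpha$ primitive in $\mathbb{F}_{2^n}$, over $\mathbb{F}_2$. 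The BCH bound (via Vandermonde together with the Frobenius relation $\alpha^{2i}=(\alpha^i)^2$, which supplies the missing even powers) gives independence of any $2t$ columns.
\item[(3b)] For odd $s=2t+1$, append a constant coordinate $1$ and include $x=0$. Every dependence among the vectors $(1,x,x^3,\ldots)$ then has an even number of terms, so any $2t+1$ of them are independent. This gives $C=2^n$ and $k=1+nt$.
\end{itemize}

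\textbf{Strength 3 via caps.} Item (2), and item (4a) for $q=3$, are constructed from caps. Vectors $(1,x)$, with $x$ ranging over an affine cap, are $3$-wise independent: a dependence $a+b+c=0$, $ax+by+cz=0$ forces three collinear points.
\begin{itemize}
\item[(2)] Take $x\in\mathbb{F}_{q^{2n}}$ and the vectors $(1,x,\mathrm{N}(x))$ with $\mathrm{N}(x)=x^{q^n+1}\in\mathbb{F}_{q^n}$, so $k=3n+1$. If $a+b+c=0$ and $ax+by+cz=0$ with $a,b,c\in\mathbb{F}_q$, the polarisation identity for the quadratic form $\mathrm{N}$ expresses $a\mathrm{N}(x)+b\mathrm{N}(y)+c\mathrm{N}(z)$ as a nonzero multiple of $\mathrm{N}(x-y)$. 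This is nonzero because the norm form is anisotropic. The case $q=2$ must be checked separately, where only the trivial coefficient patterns arise.
\item[(4a)] Over $\mathbb{F}_3$, three points are collinear iff $x+y+z=0$. A coordinatewise argument then shows that the product of affine caps is again a cap: $u+u+v=0$ forces $v=u$. Starting from the known caps of size $20$, $45$ and $112$ in $\mathrm{AG}(4,3)$, $\mathrm{AG}(5,3)$ and $\mathrm{AG}(6,3)$ (Hill, Potechin), the $n$-fold products give the three families.
\item[(4b)] I would take $c_x=(1,x,x^2)$ over $\mathbb{F}_{3^n}$, or the Sidon-set/2-cap construction of Huang--Tait--Won, giving $k=2n+1$. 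I would then verify $4$-wise independence over $\mathbb{F}_3$ by a case analysis on the coefficient patterns $(\pm1,\pm1,\pm1,\pm1)$ and $(\pm1,\pm1,\pm1,0)$. This verification is the step I expect to be the main obstacle. If the plain choice $x^2$ fails, I would switch to a quadratic form that is anisotropic on the relevant differences, as in (2).
\end{itemize}

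\textbf{Nonlinear binary items (3c), (3d).} Here I would not construct vectors at all. I would cite that the Kerdock code of length $2^n$ has $2^{2n}$ codewords, and the Delsarte--Goethals code has $2^{3n-1}$ codewords, with dual distances $6$ and $8$ respectively. This holds in Delsarte's distance-distribution sense, or via the $\mathbb{Z}_4$-linear description of Hammons et al., whose formal duals are the Preparata and Goethals codes of minimum distances $6$ and $8$. Delsarte's theorem then gives strengths $5$ and $7$. Correctly transferring the MacWilliams identity to these nonlinear codes is the only delicate point, and I would rely on the cited references for it.
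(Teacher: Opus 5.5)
Your proposal is correct. For items (1), (3a), (4a), (4b), (5), (3c) and (3d) it follows the paper's constructions essentially verbatim: generators with $s$-wise independent columns, the dual BCH columns $(x,x^3,\ldots)$, products of the Hill/Potechin caps, the points $(x,x^2)$ of Huang et al., and Delsarte's theorem for the Kerdock and Delsarte--Goethals codes.

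Your parity argument for (3b) is the paper's route in disguise. For a linear array, the doubling lemma $\left[\begin{smallmatrix} A & 0\\ \overline{A} & 1\end{smallmatrix}\right]$ amounts to adding an all-ones row to the generator. That gives exactly your columns $(1,x,x^3,\ldots)$ with $x=0$ included.

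The one genuinely different route is item (2).
\begin{itemize}
\item The paper takes the single cap $\{(x,y,ax^2+xy+by^2)\}\subset\mathbb{F}_q^3$ of size $q^2$. It then uses that products of caps are caps to get a cap of size $q^{2n}$ in $\mathbb{F}_q^{3n}$.
\item You instead take the elliptic quadric $\{(x,\mathrm{N}(x))\colon x\in\mathbb{F}_{q^{2n}}\}$ over $\mathbb{F}_{q^n}$ and read it over $\mathbb{F}_q$. Your polarisation identity, $a\mathrm{N}(x)+b\mathrm{N}(y)+c\mathrm{N}(z)=\tfrac{ab}{a+b}\,\mathrm{N}(x-y)$ when $z=(ax+by)/(a+b)$, holds in every characteristic.
\end{itemize}
Your version is a self-contained proof valid for every prime power $q$, with no product lemma needed for this item. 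The paper's version needs only a cap in dimension $3$, and reuses the product principle it needs anyway for (4a).

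Finally, your hedge in (4b) is unnecessary. Up to sign and permutation, the only coefficient patterns over $\mathbb{F}_3$ with at least three nonzero entries summing to zero are $(1,1,1,0)$ and $(1,1,-1,-1)$.
\begin{itemize}
\item The first gives $x^2+y^2+z^2=2(x-y)^2\neq 0$ whenever $x+y+z=0$.
\item The second gives $x_1+x_2=x_3+x_4$ together with $x_1x_2=x_3x_4$. This forces $\{x_1,x_2\}=\{x_3,x_4\}$, contradicting distinctness.
\end{itemize}
So the plain choice $(1,x,x^2)$ works, and it is exactly the fact the paper cites from Huang et al.
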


\subsection{Linear Orthogonal Arrays}

With the exception of (3c) and (3d), all of the orthogonal arrays in Theorem~\ref{thm:oaconstructions} belong to the special class of \emph{linear orthogonal arrays}, which are orthogonal arrays $\OA(R,C,q,s)$ whose rows form a vector space over a finite field with $q$ elements. Such a field exists if and only if $q$ is a prime power---in which case the field is unique up to isomorphism and denoted $\mathbb{F}_q$---and in that case $R = q^n$ where $n$ is the dimension of the vector space spanned by the rows. This is why all of the constructions above require $q$ to be a prime power.

To construct a linear array, say $\OA(q^n,C,q,s)$, it suffices to specify the basis of the vector space spanned by the rows, which is a set of $n$ vectors in $\mathbb{F}_q^C$. If we arrange them as the rows of an $n \times C$ matrix $M$, then it turns out that they give rise to (i.e.\ are the basis of the row space of) a linear orthogonal array of strength $s$ if and only if the \emph{columns} of $M$ are $s$-wise linearly independent.

\begin{lemma}\label{lem:OA from linearly independent vectors}
    Let $q$ be a prime power, and $M$ an $n \times C$ matrix with elements in $\mathbb{F}_q$. Then the $q^n \times C$ matrix whose rows are all possible $\mathbb{F}_q$--linear combinations of the rows of $M$ form an orthogonal array of strength $s$ if and only if any $s$ columns of $M$ are $\mathbb{F}_q$--linearly independent. In that case, we call $M$ a \emph{generator} of the orthogonal array.
\end{lemma}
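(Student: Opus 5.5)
The approach is to view the array row-by-row as the image of a linear map. Every row of the $q^n \times C$ matrix has the form $uM$ for some $u \in \mathbb{F}_q^n$, and every $u$ contributes exactly one row (counted with multiplicity, so that the array has exactly $q^n$ rows as stated). Fix a set $S$ of $s$ column indices and let $M_S$ be the $n\times s$ submatrix of $M$ with those columns. The $R\times s$ submatrix of the array is then the list of values $uM_S$ as $u$ ranges over $\mathbb{F}_q^n$. The whole proof reduces to the linear map $\phi_S\colon \mathbb{F}_q^n \to \mathbb{F}_q^s$, $u\mapsto uM_S$.

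For the ``if'' direction, suppose the columns of $M_S$ are linearly independent. Then $M_S$ has rank $s$, so $\phi_S$ is surjective. For each target $v\in\mathbb{F}_q^s$, the preimage $\phi_S^{-1}(v)$ is a coset of $\ker\phi_S$, which has size $q^{n-s}$ by rank--nullity. Hence every one of the $q^s$ possible rows appears exactly $q^{n-s}$ times in the selected submatrix, which is the orthogonal array property for this $S$. Since $S$ was arbitrary, the array has strength $s$.

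For the ``only if'' direction, suppose some $s$ columns, indexed by $S$, are linearly dependent. Then $\operatorname{rank} M_S < s$, so the image of $\phi_S$ is a proper subspace of $\mathbb{F}_q^s$. In particular some $v\in\mathbb{F}_q^s$ never appears among the rows of the submatrix, while $0$ appears at least once. Not all $q^s$ rows appear equally often, so strength $s$ fails.

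No step is a real obstacle; the only point needing care is the counting convention. Rows must be counted as a multiset indexed by $u$ (if $M$ has dependent rows, distinct $u$ can give equal rows). I would make this explicit, and it is consistent with the stated size $q^n$.
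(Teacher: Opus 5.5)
Your proof is correct and is essentially the standard argument the paper defers to (Stinson, Theorem 10.4): the chosen $s$ columns give the linear map $u \mapsto uM_S$, whose fibres are cosets of its kernel, each of size $q^{n-s}$ exactly when $M_S$ has rank $s$. Your ``only if'' direction (a rank-deficient $M_S$ has a proper image, so some row of $\mathbb{F}_q^s$ never occurs while $0$ does) is exactly the converse the paper says ``follows from the same proof'', and your multiset convention for the rows correctly matches the stated size $q^n$.
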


See e.g.\ Theorem 10.4 in \cite{stinson2008}. They only state the ``if'' direction, but the ``only if'' statement follows from the same proof. Using Lemma~\ref{lem:OA from linearly independent vectors}, the task reduces to finding a set of $s$-wise linearly independent vectors in $\mathbb{F}_q^n$ that is as large as possible. The next section contains constructions based directly on this idea.

\subsubsection{Direct linear constructions}
An instructive example is the case of strength $s=2$: a set of vectors in $\mathbb{F}_q^n$ for some $n\in \N$ is $2$-wise linearly independent if and only if no vector is a multiple of another. The $q^n-1$ non-zero elements of $\mathbb{F}_q^n$ can be partitioned into disjoint sets of $q-1$ vectors that are multiples of each other, and if we pick an arbitrary representative out of each of the $C \coloneqq (q^n-1)/(q-1)$ classes (for example the set of vectors whose first non-zero entry is equal to $1$), then we obtain an $n \times C$ matrix over $\mathbb{F}_q$ whose columns are $2$-wise linearly independent. By Lemma~\ref{lem:OA from linearly independent vectors}, it generates an $\OA(q^n,(q^n-1)/(q-1),2,q)$, and this is exactly construction (1) from Theorem~\ref{thm:oaconstructions}. A reference for this construction is Corollary 10.5 in \cite{stinson2008}.

\begin{construction}[Theorem~\ref{thm:oaconstructions} (1)]
    For a prime power $q$ and $n\in \N$, there exists a linear \[
    \OA \left( q^n, \frac{q^n-1}{q-1},2,q\right).
    \] It is generated in the sense of Lemma~\ref{lem:OA from linearly independent vectors} by an $n \times C$ matrix whose columns are all non-zero elements of $\mathbb{F}_q^n$ whose first non-zero coordinate is equal to $1$.
\end{construction}

Another relatively simple construction is Tneorem~\ref{thm:oaconstructions} (5).

\begin{construction}[Theorem~\ref{thm:oaconstructions} (5)]
    For a prime number $q$, strength $s \in \N$, and $n\in \N$, there exists a linear \[
    \OA \left( q^{ns}, q^n, q, s\right).
    \] It is generated by an $ns \times q^n$ matrix $M$ that can be obtained as follows. First, create an $s \times q^n$ matrix $M'$ over $\mathbb{F}_{q^n}$ whose columns are of the form $(1,x,x^2,\ldots, x^{s-1})$, with $x$ ranging through all $q^n$ elements of $\mathbb{F}_{q^n}$. Then, replace each entry in $M'$ with a column vector of length $n$ over $\mathbb{F}_q$ through an(y) identification of $\mathbb{F}_{q^n}$ with $\mathbb{F}_q^n$ as vector spaces over $\mathbb{F}_q$. The resulting $ns \times q^n$ matrix generates the array.
\end{construction}
\begin{proof}
    It suffices to show that any $s$ columns of $M'$ are $\mathbb{F}_{q^n}$--linearly independent. Indeed, then it follows that they are also $\mathbb{F}_q$--linearly independent, and therefore any $s$ columns of $M$ are $\mathbb{F}_q$--linearly independent.

    Let $x_1,\ldots,x_s \in \mathbb{F}_{q^n}$ be distinct. Then we have to show that the $s \times s$ submatrix of $M'$ corresponding to those $s$ columns is non-singular, and indeed its determinant is given through the Vandermonde formula by \[
    \det \begin{pmatrix}
        1 & \ldots & 1\\
        x_1 & \ldots & x_s\\
        \vdots & \ddots & \vdots \\
        x_1^{s-1} & \ldots & x_s^{s-1}
    \end{pmatrix}
    = \prod_{i < j} (x_j - x_i) \neq 0,
    \] where multiplication etc.\ are performed in $\mathbb{F}_{q^n}$.
\end{proof}

If $n=1$, then this is an $\OA(q^s,q,s,q)$; by definition, \emph{any} $q$-ary orthogonal array of strength $s$ must have at least $q^s$ rows, so the number of rows is minimal, and if $q > s$ then the number of columns is larger than that of the trivial array with the same $N,q$, and $s$ (which has $s$ columns). The largest possible number of columns given $s$, $q>s$, and $N=q^s$ is an open problem outside of a few known special cases; even if $s=2$, the answer is unknown for numerous values of $q$ (see e.g.\ Section 2 in \cite{hedayat2012}, particularly the discussion following Corollary 2.22).

If $n > 1$, then this array stops being optimal in the same sense, and in fact the scaling of its number of rows, $R = C^s$, is worse than that of all the other arrays listed in Theorem~\ref{thm:oaconstructions} in overlapping parameter regimes. But it is the best construction that we are aware of that works for arbitrary strength and $q \ge 3$. For $n=1$, the construction is well-known, see e.g.\ \cite[Corollary 10.7]{stinson2008}. We haven't directly found the generalisation to $n>1$ in the literature, but due to its simplicity assume that it is likewise well-known.

\begin{construction}[Theorem~\ref{thm:oaconstructions} (3a)]
    For $n\in \N$ and even $s=2u\in \N$, there exists a linear \[
    \OA(2^{nu},2^n-1,2,s).
    \] It is generated by an $(nu) \times (2^n-1)$ matrix $M$ over $\mathbb{F}_2$ obtained in the following way. For every non-zero $x\in \mathbb{F}_{2^n}$, create the vector $(x,x^3,\ldots,x^{2u-1}) \in \mathbb{F}_{2^n}^u$. Replace each entry with a binary vector of length $n$ through a (vector space) identification of $\mathbb{F}_{2^n}$ with $\mathbb{F}_2^n$ to yield a vector in $\mathbb{F}_2^{nu}$. The $2^n-1$ vectors obtained in this way form the columns of $M$.
\end{construction}
\begin{proof}
    It suffices to prove that the columns of $M$ constructed in this way are $s$-wise linearly independent, which is in Section 4 of a paper by Bose and Ray-Chaudhuri~\cite{bose-ray1960}.
\end{proof}

This construction is the dual of the well-known BCH code, see e.g.~\cite{BCH}.

Theorem~\ref{thm:oaconstructions} (3b) is obtained from (3a) through the following general construction for binary arrays, see e.g.\ Theorem 2.24 in \cite{hedayat2012}. In words, any binary array of even strength $s=2u$ can be ``upgraded'' to strength $s=2u+1$ at the cost of doubling the number of rows and the additional benefit of adding one column.

\begin{lemma}
    If $A$ is an $\OA(R,C,2,s)$ for even $s\in \N$, then the $(2R) \times (C+1)$ matrix \[
    \left[\begin{array}{c|c}
    \enspace A \enspace & 0\\[5pt]
    \enspace \overline{A} \enspace & 1
    \end{array}\right]
    \] is an $\OA(2R,C+1,2,s+1)$, where $\overline{A}$ denotes the array obtained from $A$ by changing $1$'s for $0$'s and vice versa.
\end{lemma}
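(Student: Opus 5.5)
We fix any $s+1$ columns of the new $(2R)\times(C+1)$ matrix and show that every binary word of length $s+1$ appears exactly $R/2^{s}$ times among the rows restricted to those columns. This is the correct count, since $R$ is a multiple of $2^s$ by the strength-$s$ property of $A$. The argument splits into two cases, according to whether the chosen columns include the new last column.

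\emph{Case 1: the last column is chosen.} The remaining $s$ chosen columns come from $A$. Consider a target word $(w,b)$ with $w\in\{0,1\}^s$ and $b\in\{0,1\}$.
\begin{itemize}
\item If $b=0$, the matching rows lie in the top block. There the $s$ columns of $A$ show $w$ exactly $R/2^s$ times, by strength $s$.
\item If $b=1$, the matching rows lie in the bottom block. A row of $\overline A$ shows $w$ exactly when the corresponding row of $A$ shows $\overline w$, which again happens $R/2^s$ times.
\end{itemize}
Since the two blocks are disjoint, the total count is $R/2^s$, as required.

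\emph{Case 2: all $s+1$ chosen columns come from $A$.} This is the main step, and it is where the evenness of $s$ enters. For a target word $w\in\{0,1\}^{s+1}$, the number of matching rows is $n_A(w)+n_A(\overline w)$, where $n_A(w)$ counts rows of $A$ showing $w$ on these columns. The claim is therefore that $n_A(w)+n_A(\overline w)=R/2^s$ for every $w$.

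To prove this, switch to $\pm1$ entries and use Fourier (character) expansion. Write
\[
n_A(w)=\frac{1}{2^{s+1}}\sum_{T\subseteq[s+1]}\chi_T(w)\,\Sigma_T,
\]
where $\Sigma_T$ is the sum over rows of $A$ of the product of the entries in the columns of $T$. Strength $s$ forces $\Sigma_T=0$ for every nonempty $T$ with $|T|\le s$, and $\Sigma_\emptyset=R$. So only $T=\emptyset$ and $T=[s+1]$ survive, which gives
\[
n_A(w)=\frac{R+\chi_{[s+1]}(w)\,\Sigma_{[s+1]}}{2^{s+1}}.
\]
Complementing $w$ flips all $s+1$ signs, so $\chi_{[s+1]}(\overline w)=(-1)^{s+1}\chi_{[s+1]}(w)=-\chi_{[s+1]}(w)$ because $s$ is even. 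The terms involving $\Sigma_{[s+1]}$ therefore cancel in the sum, leaving $n_A(w)+n_A(\overline w)=2R/2^{s+1}=R/2^s$.

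Together, the two cases show that the new matrix is an $\OA(2R,C+1,2,s+1)$. Case 2 is the only step that needs real care; the obstacle there is the unknown top-order correlation $\Sigma_{[s+1]}$, and the parity argument removes it.
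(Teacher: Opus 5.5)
Your proof is correct and complete. The paper gives no argument of its own for this lemma; it only refers to Theorem 2.24 in \cite{hedayat2012}. So your proposal is a self-contained replacement for a citation rather than a variant of a proof in the text.

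Case 1 is routine and you handle it correctly. In Case 2, your character expansion isolates the only quantity that strength $s$ does not control, namely $\Sigma_{[s+1]}$, and the oddness of $s+1$ cancels it. The implicit step that strength $s$ gives $\Sigma_T=0$ for all nonempty $|T|\le s$ is fine: any such $T$ extends to $s$ of the chosen columns, and marginals of a uniform distribution are uniform.

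A more elementary route to Case 2 avoids characters altogether:
\begin{itemize}
\item If $w,w'\in\{0,1\}^{s+1}$ differ in exactly one coordinate, delete that coordinate and apply strength $s$ to the other $s$ columns. This gives $n_A(w)+n_A(w')=R/2^s$.
\item Walk from $w$ to $\overline{w}$ through $s+1$ single-coordinate flips, an odd number. This yields $n_A(\overline{w})=R/2^s-n_A(w)$.
\end{itemize}
The two arguments have the same content. The alternating behaviour of $n_A$ along Hamming neighbours is exactly the statement that only $T=\emptyset$ and $T=[s+1]$ survive in your expansion.

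The advantage of your Fourier viewpoint is that it extends to a one-shot proof covering both cases. With $0\mapsto+1$ and $1\mapsto-1$, the character sums of the new matrix are:
\begin{itemize}
\item $(1+(-1)^{|T|})\,\Sigma_T$ for $T\subseteq[C]$;
\item $(1-(-1)^{|T'|})\,\Sigma_{T'}$ for $T=T'\cup\{C+1\}$.
\end{itemize}
Both vanish whenever $1\le|T|\le s+1$. This also ties the lemma directly to the dual-distance characterisation in Theorem~\ref{thm:dualdistance}.
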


\subsubsection{Constructions from cap sets}
Let $q$ be a prime power, and $d\in \N$. A \emph{cap set} in $\mathbb{F}_q^d$ is a set of points such that no three of them are collinear, or, in other words, a $3$-wise affinely independent set of points.

\begin{lemma}\label{lem:OA from affinely independent vectors}
    If the columns of an $n \times C$ matrix $M$ over $\mathbb{F}_q$ are $s$-wise affinely independent, then the $(n+1) \times C$ matrix obtained by adding a row that only consists of $1$'s generates an $\OA(q^{n+1}, C, q, s)$ in the sense of Lemma~\ref{lem:OA from linearly independent vectors}.
\end{lemma}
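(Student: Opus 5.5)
The plan is to reduce the statement to Lemma~\ref{lem:OA from linearly independent vectors} by showing that the augmented matrix
\[
M^+ = \begin{pmatrix} 1 & \cdots & 1 \\ & M & \end{pmatrix} \in \mathbb{F}_q^{(n+1)\times C}
\]
has $s$-wise linearly independent columns. Once this holds, Lemma~\ref{lem:OA from linearly independent vectors} says that the $q^{n+1}$ linear combinations of the rows of $M^+$ form an orthogonal array of strength $s$ with $C$ columns. Since the rows of $M^+$ are linearly independent, these combinations are the $q^{n+1}$ distinct codewords of the row space, which gives the parameters $\OA(q^{n+1},C,q,s)$. If the rows of $M^+$ happened to be dependent, the enumerated matrix would list each codeword $q^{n+1-\dim}$ times; a multiset repetition preserves the orthogonal array property, so the conclusion holds either way.

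The key step is the standard equivalence between affine independence of the points $v_1,\dots,v_s$ and linear independence of the lifted vectors $(1,v_1),\dots,(1,v_s)$. Take distinct columns $v_{i_1},\dots,v_{i_s}$ of $M$ and suppose that $\sum_j c_j (1,v_{i_j}) = 0$ with $c_j\in\mathbb{F}_q$. The first coordinate gives $\sum_j c_j = 0$, and the remaining coordinates give $\sum_j c_j v_{i_j} = 0$.

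A linear relation whose coefficients sum to zero is exactly an affine dependence among the points $v_{i_j}$. By the assumed $s$-wise affine independence it must be trivial, so all $c_j = 0$. This shows that any $s$ columns of $M^+$ are linearly independent, as required.

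No step here is a real obstacle; the only care needed is stating the definition of affine independence precisely (no nontrivial relation $\sum c_j v_j = 0$ with $\sum c_j = 0$) and tracking the row count as discussed above.
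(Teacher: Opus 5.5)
Your proof is correct and follows the paper's own argument: lift each column $v$ to $(1,v)$, note that a linear relation among the lifted vectors forces the coefficients to sum to zero and is therefore an affine relation among the original columns, and then invoke Lemma~\ref{lem:OA from linearly independent vectors}. One small slip is that the rows of the augmented matrix need not be linearly independent (for instance, they are dependent if $M$ has two equal rows), so your opening claim about $q^{n+1}$ \emph{distinct} codewords is unjustified; your fallback remark about repeated rows covers this, and in any case Lemma~\ref{lem:OA from linearly independent vectors} already enumerates all $q^{n+1}$ coefficient vectors, so the row count comes for free.
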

\begin{proof}
    A set of vectors that is affinely independent becomes linearly independent when an additional coordinate equal to $1$ is added to each vector. Then the statement follows from Lemma~\ref{lem:OA from linearly independent vectors}.
\end{proof}

Furthermore, it is easy to see that products of cap sets are cap sets. In particular, if $S \subset \mathbb{F}_q^d$ is a cap set say of size $k$, then $S^n \subset \mathbb{F}_q^{dn}$ is a cap set of size $k^n$. Combining with Lemma~\ref{lem:OA from affinely independent vectors} gives the following.

\begin{corollary}\label{cor:OA from capset}
    If $S \subset \mathbb{F}_q^d$ is a cap set of size $C$, then there exists an $\OA(q^{nd+1},C^n,q,3)$.
\end{corollary}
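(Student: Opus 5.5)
The corollary follows by combining the product property of cap sets with Lemma~\ref{lem:OA from affinely independent vectors} and Lemma~\ref{lem:OA from linearly independent vectors}. I take $S\subset\mathbb{F}_q^d$ to be a cap set with $|S|=C$, i.e.\ no three points of $S$ are collinear. The plan has three steps.

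First, I identify "no three collinear" with "$3$-wise affinely independent". Three distinct points $a,b,c$ are affinely dependent iff $b-a$ and $c-a$ are linearly dependent. For distinct points, this means $c-a=\lambda(b-a)$ for some $\lambda\in\mathbb{F}_q$, i.e.\ $c$ lies on the line through $a$ and $b$. So a cap set is exactly a $3$-wise affinely independent set. The same equivalence gives $2$-wise affine independence, since any two distinct points are affinely independent.

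Second, I prove that the $n$-fold product $S^n\subset\mathbb{F}_q^{dn}$, which has size $C^n$, is again a cap set. Take three distinct points $x,y,z\in S^n$ and suppose they are collinear, so $z-x=\lambda(y-x)$ coordinatewise in blocks. Look at a block $i$ where $x_i\neq y_i$; such a block exists because $x\neq y$. In that block, $x_i,y_i,z_i$ satisfy $z_i-x_i=\lambda(y_i-x_i)$.

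There are three cases for $z_i$:
\begin{itemize}
\item If $z_i\notin\{x_i,y_i\}$, then $x_i,y_i,z_i$ are three distinct collinear points of $S$, a contradiction.
\item If $z_i=x_i$, then $\lambda=0$, so $z=x$ in every block, contradicting distinctness.
\item If $z_i=y_i$, then $\lambda=1$, so $z=y$, again a contradiction.
\end{itemize}
Hence $S^n$ is a cap set. This is the only step needing care, because collinearity must be handled blockwise with a single shared $\lambda$; the case analysis above settles it.

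Third, I form the $dn\times C^n$ matrix $M$ whose columns are the elements of $S^n$. By the first two steps its columns are $3$-wise affinely independent. Lemma~\ref{lem:OA from affinely independent vectors} then says that appending a row of $1$'s gives a generator, in the sense of Lemma~\ref{lem:OA from linearly independent vectors}, of an $\OA(q^{dn+1},C^n,q,3)$. This is the claim.

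One caveat on the final lemma: the row space has $q^{dn+1}$ rows only if we list all linear combinations with multiplicity, as Lemma~\ref{lem:OA from linearly independent vectors} does, so the row count matches regardless of the rank of $M$.
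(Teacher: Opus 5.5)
Your proof is correct and follows the same route as the paper: the paper asserts that products of cap sets are cap sets and then applies Lemma~\ref{lem:OA from affinely independent vectors}. You also supply the two details the paper leaves implicit, namely the identification of cap sets with $3$-wise affinely independent sets and the blockwise case analysis showing that a shared $\lambda$ would have to be $0$ or $1$, which forces $z=x$ or $z=y$.
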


Using Corrolary~\ref{cor:OA from capset}, we can turn known cap-set constructions into orthogonal arrays. For instance, Theorem~\ref{thm:oaconstructions} (2) is based on a known construction of a cap set in $\mathbb{F}_q^3$ for prime $q$.

\begin{construction}[Theorem~\ref{thm:oaconstructions} (2)]
    For a prime number $q$ and $n\in \N$, there exists a linear \[
    \OA(q^{3n+1},q^{2n},q,3).
    \] To construct it, let $aX^2 + X + b$ be an irreducible degree two polynomial over $\mathbb{F}_q$, and put \[
    S = \{(x, y, ax^2 + xy + by^2)\colon x,y\in \mathbb{F}_q\}.
    \] Then $S$ is a cap set of size $q^2$ in $\mathbb{F}_q^3$, which generates the orthogonal array in the sense of Corollary~\ref{cor:OA from capset}.
\end{construction}
\begin{proof}
    The cap set above can be derived from the cap set (or \emph{ovoid}) in the projective space $PG(3,q)$ constructed in Example 1.4(1) in \cite{keefe96}.
\end{proof}

The arrays in Theorem~\ref{thm:oaconstructions} (4a) are based on known cap sets in $\mathbb{F}_3^d$ for $d=4,5,6$.

\begin{construction}[Theorem~\ref{thm:oaconstructions} (4a)]
    The three classes of orthogonal arrays in Theorem~\ref{thm:oaconstructions} (4a) are generated in the sense of Lemma~\ref{lem:OA from affinely independent vectors} by the following cap sets:
    \begin{enumerate}
        \item[\textbullet] A cap set of size $20$ in $\mathbb{F}_3^4$,
        \item[\textbullet] A cap set of size $45$ in $\mathbb{F}_3^5$,
        \item[\textbullet] A cap set of size $112$ in $\mathbb{F}_3^6$,
    \end{enumerate}
    which are explicitly given, constructed, or can be derived from the constructions given in \cite[Fig.\ 1]{hill83}, \cite[Section 2]{hill73}, and \cite{potechin2008}, respectively.
\end{construction}
The three cap sets above are all of maximal size \cite{hill83,hill73,potechin2008}, see also sequence A090245 in the OEIS.

Sets of points in $\mathbb{F}_q^d$ that are $s$-wise affinely independent for $s > 3$ are sometimes called \emph{generalised cap sets}, and are much less well-studied. They also lack the convenient product property of (ordinary) cap sets: in fact, if $S \subset \mathbb{F}_q^d$ contains at least two points and is $4$-wise affinely independent, then $S^2 \subset \mathbb{F}_q^{2d}$ is not. Indeed, let $u,v \in S$ be distinct, then $(u,u)$, $(u,v)$, $(v,u)$, and $(v,v)$ are all in $S^2$ and lie on a plane (i.e.\ are affinely dependent).

The only result in this area that we are aware of is a construction by Huang \cite{huang2019} of $4$-wise affinely independent sets in $\mathbb{F}_3^n$.

\begin{construction}[Theorem~\ref{thm:oaconstructions} (4b)]
    For every integer $n\in \N$, there exists a linear \[
    \OA(3^{2n+1}, 3^n, 3, 4).
    \] For $x\in \mathbb{F}_{3^n}$, map the vector $(x,x^2) \in \mathbb{F}_{3^n}^2$ to a ternary vector of length $2n$ using an identification of $\mathbb{F}_{3^n}$ with $\mathbb{F}_3^n$ as vector spaces over $\mathbb{F}_3$. Looping through all $3^n$ possible values for $x$, this gives $3^n$ points in $\mathbb{F}_3^{2n}$. They are $4$-wise affinely independent \cite{huang2019} and generate the array in the sense of Corollary~\ref{cor:OA from capset}.
\end{construction}

\subsubsection{Constructions from non-linear codes}
There is a connection between orthogonal arrays and codes, which we outline briefly.

\begin{definition}\label{def:code}
    Let $N,k\in \N$, and $q\in \N$ be a prime power.
    \begin{enumerate}
        \item[(i)] A \emph{code of size $N\in \N$ and length $k\in \N$ over $\mathbb{F}_q$} is a matrix $C \in \mathbb{F}_q^{N \times k}$, whose rows are called \emph{code words}, and we slightly abuse notation by writing $u\in C$ if $u$ is a row in $C$.
        \item[(ii)] The \emph{distance} $d$ of $C$ is the minimal Hamming distance between any two distinct codewords, that is, \[
    d = \min_{\substack{u,v\in C\\ u\neq v}} d_H(u,v),\quad \text{where} \quad d_H(u,v) = \#\{ i\in [k]\colon u_i \neq v_i\}.
    \]
        \item[(iii)] A code $C$ is called \emph{linear} if its code words are a vector space over $\mathbb{F}_q$. In that case, the \emph{dual code} of $C$ is its orthogonal complement, \[
        C^\perp \coloneqq \{ u\in \mathbb{F}_q^k\colon \forall v\in C\colon \left< u,v\right> = 0\}.
        \] The distance $d^\perp$ of $C^\perp$ is called the \emph{dual distance} of $C$.
    \end{enumerate}
\end{definition}

It is possible to extend the notion of dual distance to non-linear codes, see e.g.\ section~4.4 of~\cite{hedayat2012}. This is important for the statement of the following theorem, but it is not necessary in any way to understand the definition for the purpose of reading the remainder of this section.

\begin{theorem}[Delsarte 1973 \cite{delsarte1973algebraic}]\label{thm:dualdistance}
    A matrix with elements in $\mathbb{F}_q$ for some prime power $q\in \N$ is a code with dual distance at least $t+1$ if and only if it is an orthogonal array with strength at least~$t$.
\end{theorem}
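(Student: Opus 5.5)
The plan is to use Fourier analysis on the additive group $\mathbb{F}_q^k$. This handles linear and non-linear codes simultaneously, once the dual distance of a general code is taken in the sense of Delsarte (cf.\ \cite[Sect.~4.4]{hedayat2012}). Fix a non-trivial additive character $\chi$ of $\mathbb{F}_q$, for instance $\chi(a)=e^{2\pi \iu\,\mathrm{tr}(a)/p}$ where $p$ is the characteristic. Every character of $\mathbb{F}_q^k$ then has the form $x\mapsto\chi(\langle u,x\rangle)$ for a unique $u\in\mathbb{F}_q^k$. For an $N\times k$ code $C$, viewed as a multiset of rows, define
\[
\widehat{C}(u)\coloneqq \sum_{c\in C}\chi(\langle u,c\rangle),\qquad u\in\mathbb{F}_q^k .
\]
The dual distance is then the smallest Hamming weight of a $u\neq 0$ with $\widehat{C}(u)\neq 0$. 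Equivalently, it is the smallest $i\ge 1$ with non-zero MacWilliams transform $B_i$ of the distance distribution, since $B_i$ is proportional to $\sum_{\mathrm{wt}(u)=i}|\widehat{C}(u)|^2\ge 0$. For linear $C$ we have $\widehat{C}(u)=|C|\,\ind_{u\in C^\perp}$, because $u\notin C^\perp$ makes $c\mapsto\chi(\langle u,c\rangle)$ a non-trivial character of the group $C$. So this notion agrees with Definition~\ref{def:code}(iii). I would verify this identity first, so that the theorem reduces to the following claim.

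\emph{Claim:} $C$ is an orthogonal array of strength $t$ if and only if $\widehat{C}(u)=0$ for every $u\neq 0$ with $\mathrm{wt}(u)\le t$.

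To prove the claim, fix a set $S$ of $t$ columns, and let $f_S$ be the counting function on $\mathbb{F}_q^S$ of the rows of the $N\times t$ submatrix $C_S$. The OA property for $S$ says exactly that $f_S$ is constant, equal to $N/q^t$. A function on the finite abelian group $\mathbb{F}_q^S$ is constant if and only if all its non-trivial Fourier coefficients vanish. The Fourier coefficient of $f_S$ at $v\in\mathbb{F}_q^S$ is $\sum_{c\in C}\chi(\langle v,c_S\rangle)=\widehat{C}(\iota_S v)$, where $\iota_S v$ extends $v$ by zeros outside $S$. As $S$ ranges over all $t$-subsets and $v$ over $\mathbb{F}_q^S\setminus\{0\}$, the vectors $\iota_S v$ range over exactly the non-zero $u$ of weight at most $t$. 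This gives both directions of the claim at once. Since strength $t$ implies strength $t'$ for all $t'\le t$ (marginalise), and dual distance at least $t+1$ is monotone in the same way, ``at least'' on both sides matches up.

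For the purely linear case one could instead invoke Lemma~\ref{lem:OA from linearly independent vectors}. A dual codeword $u\neq0$ supported on at most $t$ coordinates is precisely a non-trivial linear relation among at most $t$ columns of a generator matrix of $C$, which gives the equivalence directly. I would present this as a remark.

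The main obstacle is definitional rather than computational. For non-linear codes one must fix a definition of dual distance and check that it is the character-sum condition above. Concretely, this means carrying out the MacWilliams--Delsarte computation that $B_i=|C|^{-1}\sum_{\mathrm{wt}(u)=i}|\widehat{C}(u)|^2$, which expands $|\widehat{C}(u)|^2$ as a sum over pairs of codewords and sums the characters over each weight shell via the Krawtchouk polynomials. I would cite this identity from \cite{delsarte1973algebraic} rather than rederive it. Everything after that is the elementary Fourier argument in the claim.
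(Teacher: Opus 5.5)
Your argument is correct. The Fourier criterion for constancy of each $t$-column count $f_S$, combined with the positivity $B_i\propto\sum_{\mathrm{wt}(u)=i}|\widehat{C}(u)|^2$, gives both directions, including the linear case via $\widehat{C}(u)=|C|\,\ind_{u\in C^\perp}$. The paper itself gives no proof beyond citing Delsarte and Theorem~4.9 of \cite{hedayat2012}, and your character-sum/MacWilliams-transform argument is essentially the standard proof behind that citation.

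One cosmetic nit: with the usual normalisation $A_i=|C|^{-1}\#\{(c,c')\in C^2\colon d_H(c,c')=i\}$ and $B_0=1$, the identity reads $B_i=|C|^{-2}\sum_{\mathrm{wt}(u)=i}|\widehat{C}(u)|^2$ rather than having $|C|^{-1}$. This is immaterial, since only the positivity of the constant is used.
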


See also Theorem 4.9 in \cite{hedayat2012} for a more recent exposition.

The two remaining constructions, Theorem~\ref{thm:oaconstructions} (3c) and (3d), are obtained from two non-linear codes called the \emph{Kerdock code} \cite{kerdock} and the \emph{Delsarte-Goethals code} \cite{delsarte-goethals}, binary codes with dual distance $6$ and $8$, respectively. 

\begin{construction}[Theorem~\ref{thm:oaconstructions} (3c)]
    For $n\ge 4$ even, there exists a non-linear \[
    \OA(2^{2n}, 2^n, 2, 5).
    \] The rows of the array are the codewords of the Kerdock code, whose construction can be derived from sections 3 and 4 of \cite{hammons94}.
\end{construction}

\begin{construction}[Theorem~\ref{thm:oaconstructions} (3d)]
    For $n\ge 4$ even, there exists a non-linear \[
    \OA(2^{3n-1}, 2^n, 2, 7).
    \] The rows of the array are the codewords of the Delsarte-Goethals code, whose construction can be derived from sections 3, 4, and 6 of \cite{hammons94}.
\end{construction}

    \clearpage
\section{Full-Sized Plots}\label{app:plots}
This appendix includes full-sized versions of the plots in Fig.~\ref{fig:sde-multifigure}. In addition to our cubatures, they include the degree 5 cubature constructed in the original paper by Lyons and Victoir~\cite{lyons2004cubature}, as well as the degree~5 and~7 cubatures constructed in~\cite{nohrouzian}.

\vfill
\begin{center}
    \textit{[Left intentionally blank. Figures continue on the following page.]}
\end{center}
\vfill

\begin{figure}[p]
    \centering
    \includegraphics[width=0.8\textwidth]{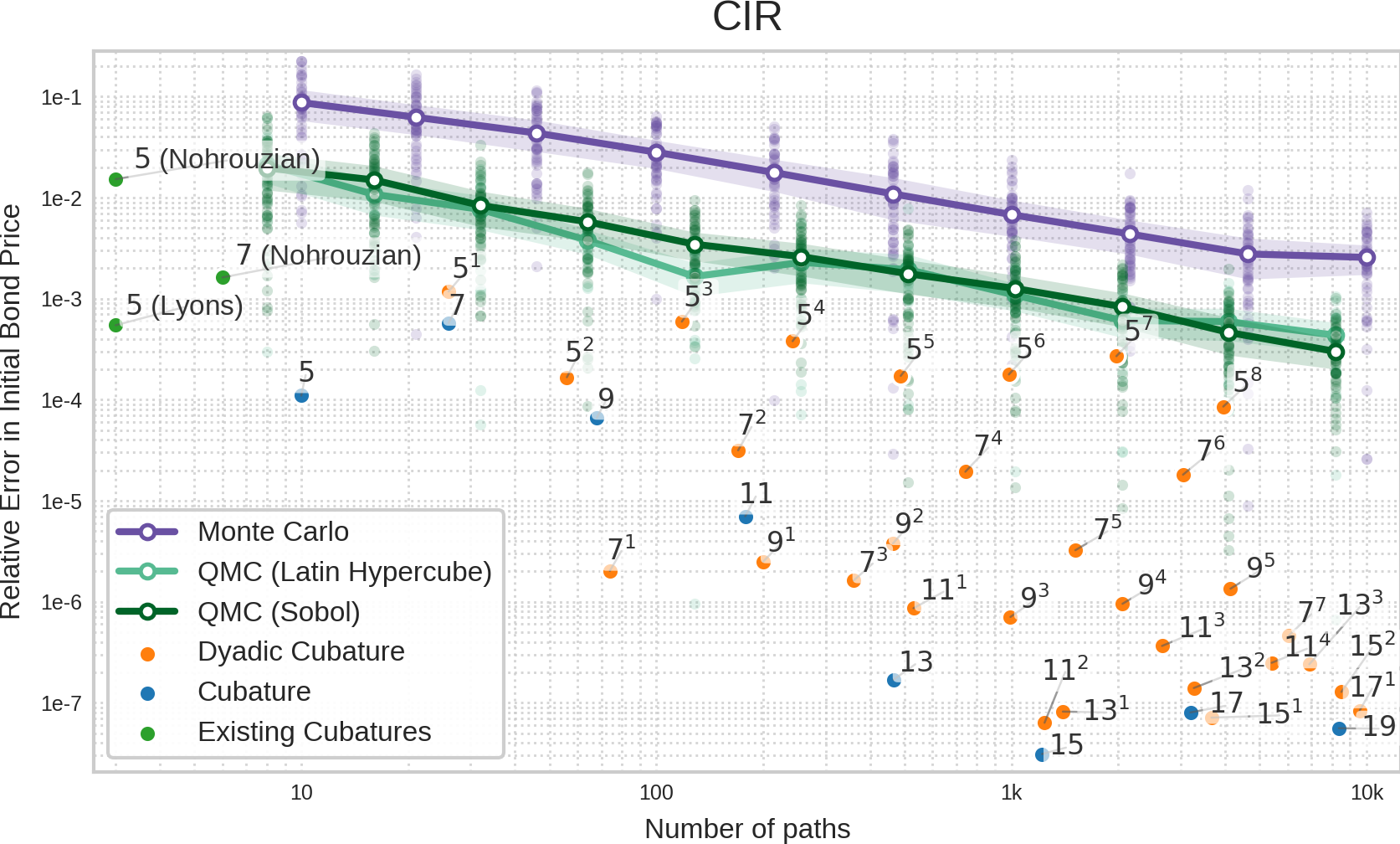}
    \caption{Error plot showing the performance of our cubature formulae in comparison with plain Monte Carlo and QMC methods measured in \emph{Initial Bond Price Relative Error} (see~\eqref{eq:initialbondprice}) in the CIR model. Superscripts in cubature labels refer to the \emph{dyadic depth} of the cubature, see Section~\ref{sec:methods}.}
    \label{fig:cir-bondprice}
\end{figure}

\begin{figure}[p]
    \centering
    \includegraphics[width=0.8\textwidth]{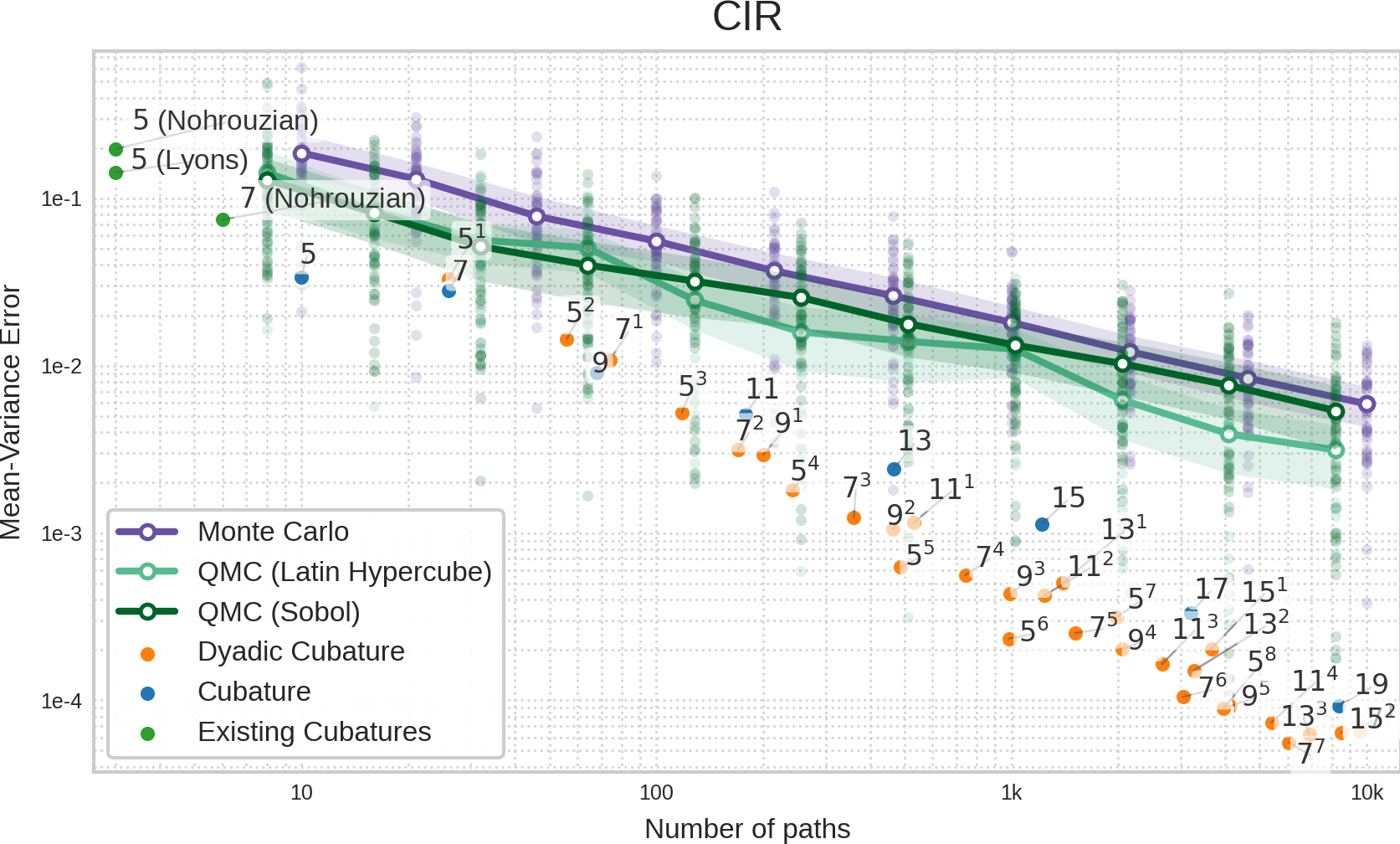}
    \caption{Error plot showing the performance of our cubature formulae in comparison with plain Monte Carlo and QMC methods measured in Mean--Variance Error (see~\eqref{eq:mean-variance-error}) in the CIR model. Superscripts in cubature labels refer to the \emph{dyadic depth} of the cubature, see Section~\ref{sec:methods}.}
    \label{fig:cir-mve}
\end{figure}

\begin{figure}[p]
    \centering
    \includegraphics[width=0.8\textwidth]{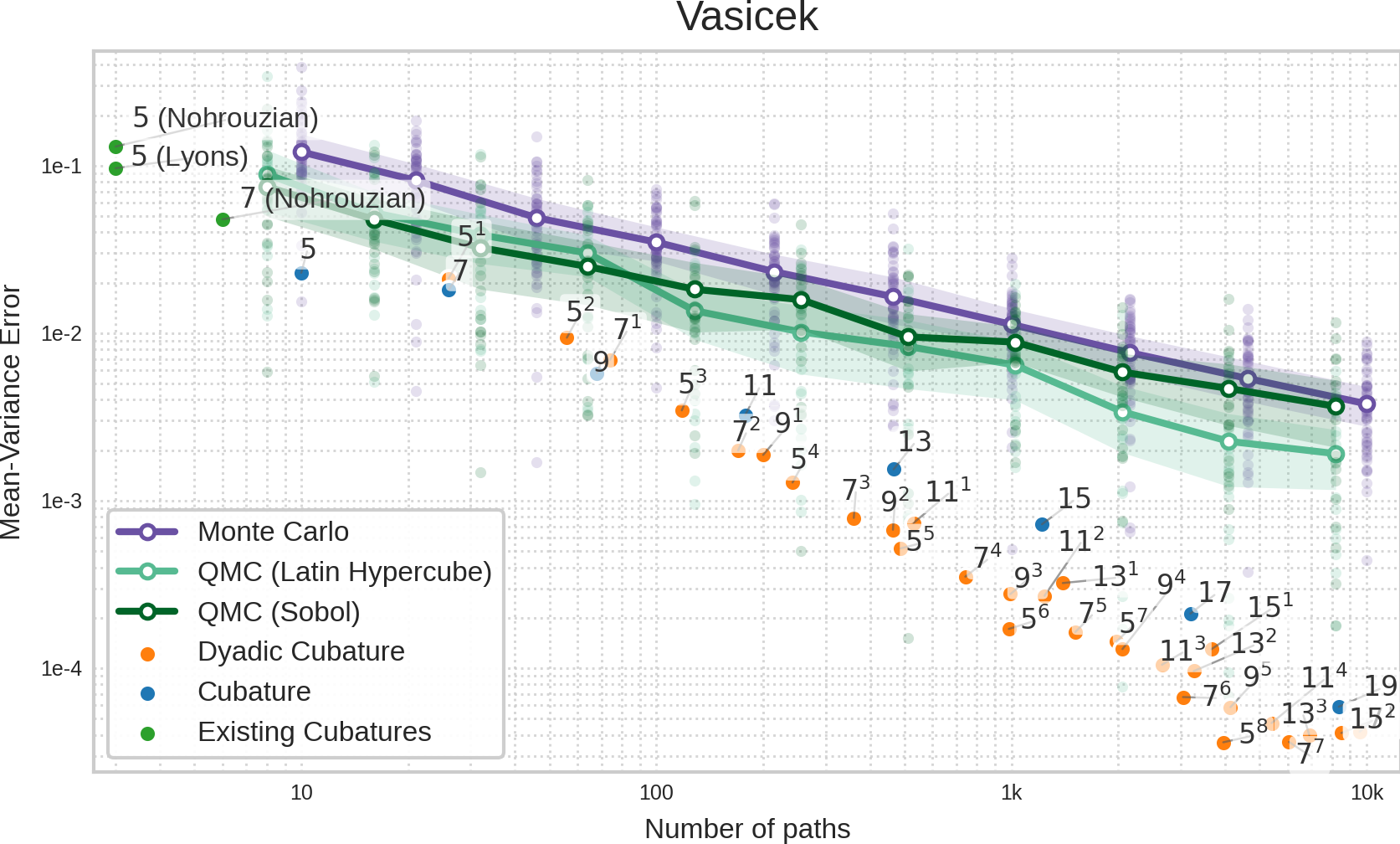}
    \caption{Error plot showing the performance of our cubature formulae in comparison with plain Monte Carlo and QMC methods measured in Mean--Variance Error (see~\eqref{eq:mean-variance-error}) in the Vasicek model. Superscripts in cubature labels refer to the \emph{dyadic depth} of the cubature, see Section~\ref{sec:methods}.}
    \label{fig:vasicek-mve}
\end{figure}

\begin{figure}[p]
    \centering
    \includegraphics[width=0.8\textwidth]{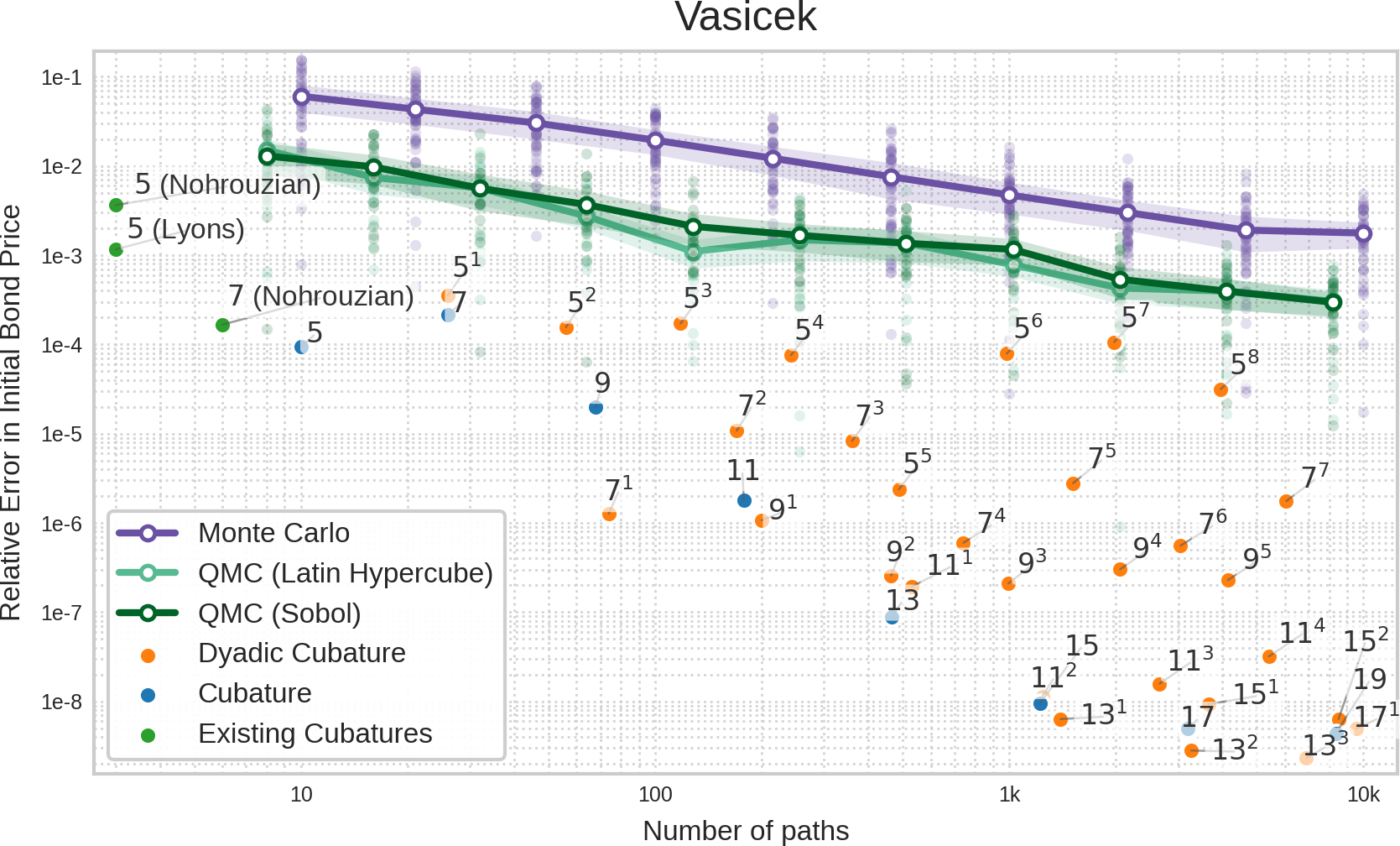}
    \caption{Error plot showing the performance of our cubature formulae in comparison with plain Monte Carlo and QMC methods measured in \emph{Initial Bond Price Relative Error} (see~\eqref{eq:initialbondprice}) in the Vasicek model. Superscripts in cubature labels refer to the \emph{dyadic depth} of the cubature, see Section~\ref{sec:methods}.}
    \label{fig:vasicek-bondprice}
\end{figure}

\begin{figure}[p]
    \centering
    \includegraphics[width=0.8\textwidth]{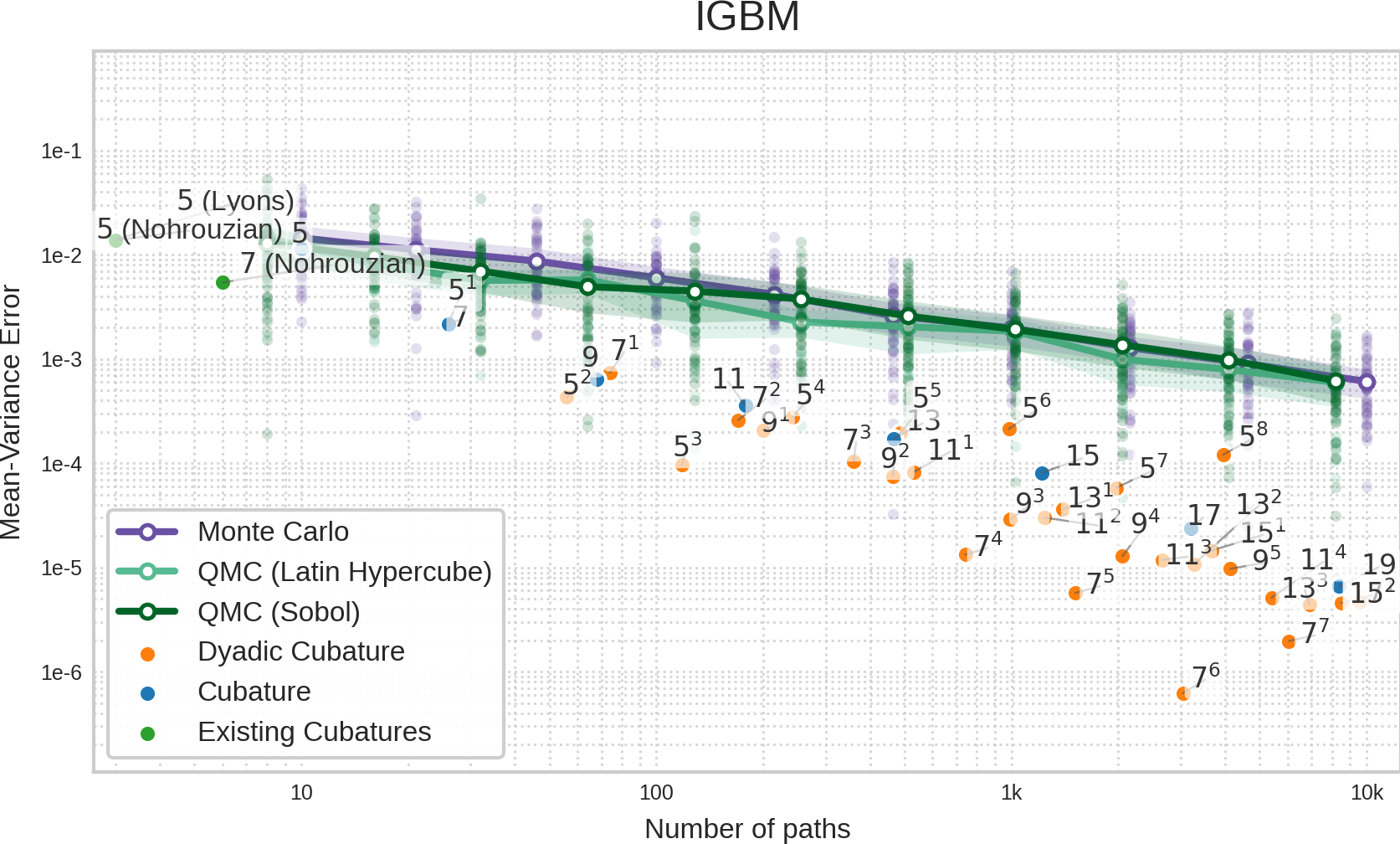}
    \caption{Error plot showing the performance of our cubature formulae in comparison with plain Monte Carlo and QMC methods measured in Mean--Variance Error (see~\eqref{eq:mean-variance-error}) in the IGBM model. Superscripts in cubature labels refer to the \emph{dyadic depth} of the cubature, see Section~\ref{sec:methods}.}
    \label{fig:igbm-mve}
\end{figure}

\begin{figure}[p]
    \centering
    \includegraphics[width=0.8\textwidth]{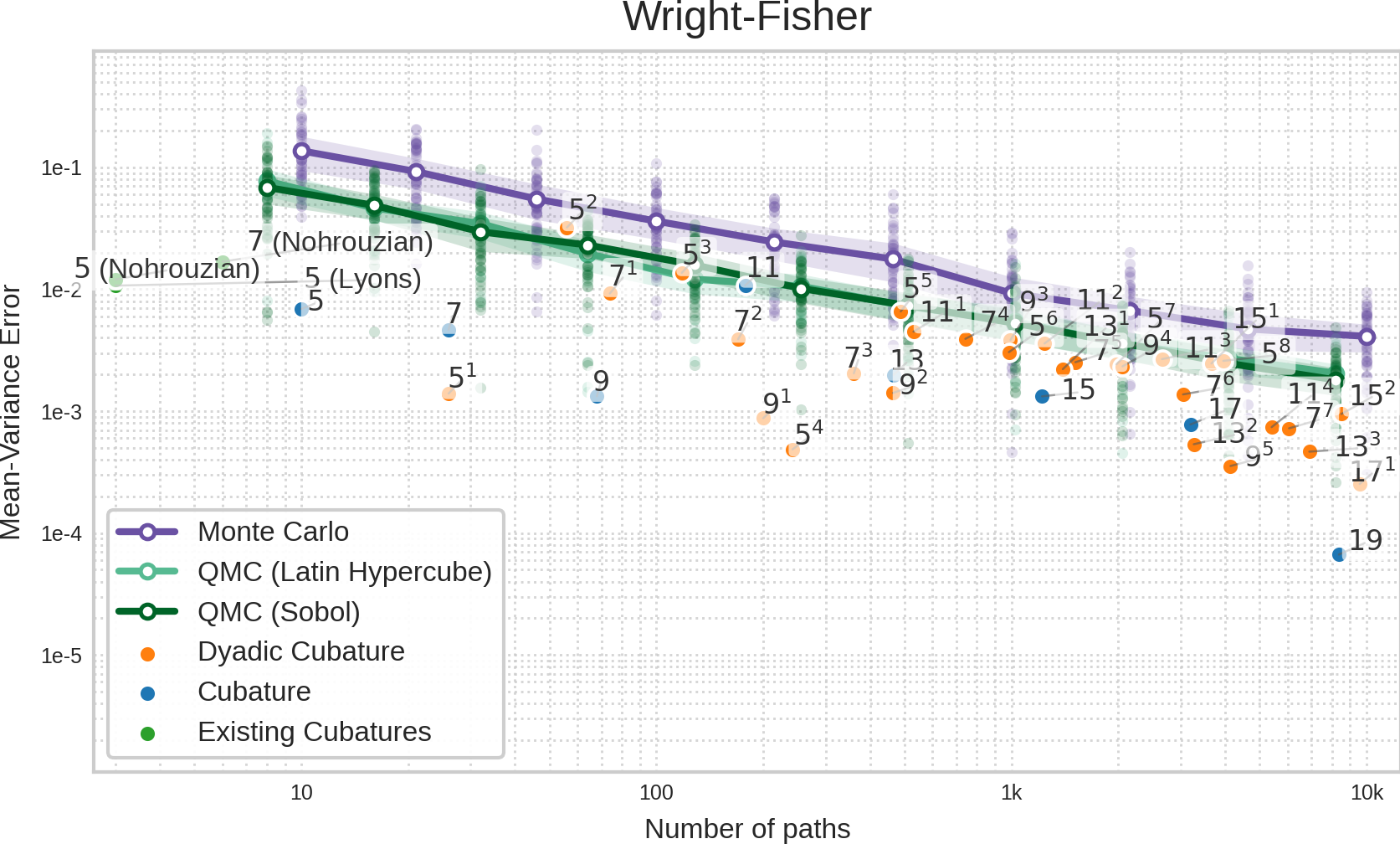}
    \caption{Error plot showing the performance of our cubature formulae in comparison with plain Monte Carlo and QMC methods measured in Mean--Variance Error (see~\eqref{eq:mean-variance-error}) in the Wright--Fisher diffusion model. Superscripts in cubature labels refer to the \emph{dyadic depth} of the cubature, see Section~\ref{sec:methods}.}
    \label{fig:wf-mve}
\end{figure}
\end{appendices}
\end{document}